\numberwithin{equation}{section}
\newtheorem{theorem}{Theorem}[section]
\newtheorem{lemma}[theorem]{Lemma}
\newtheorem{remark}[theorem]{Remark}
\newtheorem{proposition}[theorem]{Proposition}
\newtheorem{definition}[theorem]{Definition}
\newcommand{\rmd}{\mathrm{d}}
\newcommand{\rmD}{\mathrm{D}}
\renewcommand{\epsilon}{\varepsilon}
\newcommand{\Id}{\mathrm{Id}}
\newcommand{\ddt}[1]{\frac{\textnormal{d}#1}{\textnormal{d}t}}
\newcommand{\R}{\mathbb{R}}
\newcommand{\N}{\mathbb{N}}
\newcommand{\ve}{\varepsilon}
\newcommand{\cO}{\mathcal{O}}
\newcommand{\txta}{\textnormal{a}}
\newcommand{\txtr}{\textnormal{r}}
\newcommand{\cS}{\mathcal{S}}
\newcommand{\cD}{\mathcal{D}}
\newcommand{\cY}{\mathcal{Y}}
\newcommand{\nwc}{\newcommand}
\nwc{\red}[1]{\textcolor{black}{#1}}
\nwc{\blue}[1]{{\color{black}{#1}}}
\renewcommand*\env@matrix[1][\arraystretch]{%
  \edef\arraystretch{#1}%
  \hskip -\arraycolsep
  \let\@ifnextchar\new@ifnextchar
  \array{*\c@MaxMatrixCols c}}
\providecommand{\keywords}[1]
{
  \textbf{{Keywords: }} #1
}
\providecommand{\mathclassification}[1]
{
  \textbf{{Mathematics  Subject  Classification  (2010): }} #1
}
\begin{document}
\title{
Extended and symmetric loss of stability for canards in planar fast-slow maps
}
\author{Maximilian Engel\thanks{
Technische Universität München,
Forschungseinheit Dynamics,
Zentrum Mathematik, M8, 
Boltzmannstraße 3,
85748 Garching bei München. 
(\texttt{maximilian.engel@tum.de})
}
 \ and  
Hildeberto Jard\'on-Kojakhmetov\thanks{
Faculty of Science and Engineering, Bernoulli Institute, University of Groningen, 9747 AG, Groningen, The Netherlands 
(\texttt{h.jardon.kojakhmetov@rug.nl})
}
}

\pagenumbering{arabic}

\maketitle

\begin{abstract}
    We study fast-slow maps obtained by discretization of planar fast-slow systems in continuous time. We focus on describing the so-called delayed loss of stability induced by the slow passage through a singularity in fast-slow systems. This delayed loss of stability can be related to the presence of canard solutions. Here we consider three types of singularities: transcritical, pitchfork, and fold. First, we show that under an explicit Runge-Kutta discretization the delay in loss of stability, due to slow passage through a transcritical or a pitchfork singularity, can be arbitrarily long. In contrast, we prove that under a Kahan-Hirota-Kimura discretization scheme, the delayed loss of stability related to all three singularities is completely symmetric in the linearized approximation, in perfect accordance with the continuous-time setting. 
    
\end{abstract}
\noindent\keywords{Delayed loss of stability, discretization, maps, canards, fast-slow systems.}

\noindent\mathclassification{34E15, 34E17, 37G10, 39A12.}



\section{Introduction}
Consider a system of singularly perturbed ordinary differential equations (ODEs) in \textit{fast} time scale
\begin{equation}\label{fastequ}
    \begin{split}
    \ddt{x} = x' &= f(x,y,\epsilon)\,, \\
    \ddt{y} = y' &= \epsilon g(x,y,\epsilon)\,,  \quad \ x \in \mathbb{R}^m, 
\quad y \in \mathbb R^n, \quad 0 < \epsilon \ll 1\,,
    \end{split}
\end{equation}
with \textit{critical manifold} 
\begin{equation}
    \cS_0= \{(x,y) \in \mathbb{R}^{m+n} \,:\, f(x,y,0) = 0 \}\,.
\end{equation}
The set $\cS_0$ is called \textit{normally hyperbolic} if 
the matrix $\textnormal{D}_xf(p)\in\mathbb{R}^{m\times m}$ for all $p\in \cS_0$ has no
spectrum on the imaginary axis. For a normally hyperbolic and compact $\cS_0$, \textit{Fenichel 
Theory} \cite{Fenichel4,Jones,ku2015,WigginsIM} implies that for $\epsilon$ 
sufficiently small, there is a locally invariant \emph{slow manifold} $\cS_{\epsilon}$ 
behaving like a regular perturbation of $\cS_0$. On the other hand, \emph{loss of normal hyperbolicity}, which occurs whenever $\textnormal{D}_xf(p)$ has at least one eigenvalue on the imaginary axis, is known  to be responsible for many complicated dynamic effects, such as relaxation oscillations and mixed mode oscillations \cite{desroches2012mixed,ku2015}, that are difficult to analyze rigorously. In this paper we will focus on one of such features found when normal hyperbolicity is lost, namely \emph{canards}. In particular, we shall study planar fast-slow systems with a \emph{canard point} at the origin, past whom trajectories connect an attracting branch of the slow manifold with a repelling one, also described as \emph{maximal canard}~\cite{BenoitCallotDienerDiener,DuRo96,ks2001/3}. For continuous-time fast-slow systems, these canard solutions allow us to explain why one observes a delay in the onset of instabilities when trajectories slowly cross a singularity \cite{de2008maximum,de2016entry,hayes2016geometric}, see also \cite{fruchard2009survey,lobry1991dynamic,neishtadt1987persistence,neishtadt1988persistence,neishtadt2009stability}. 

In this paper we focus on (discretized) planar fast-slow systems with canard points associated to three singularities: the transcritical, the pitchfork, and the fold. One important motivation to consider fast-slow systems with the aforementioned singularities is that they are common in models used in the applied sciences, thus organising the behaviour of the corresponding dynamics and playing a crucial role for numerical simulations. For example, transcritical singularities may appear in epidemiological and other population dynamic models \cite{boudjellaba2009dynamic} and seem to be a generic mechanism for stability loss in some network models \cite{jardon2019fast}; pitchfork bifurcations can be found e.g. in decision making dynamics \cite{gray2018multiagent} or biochemical oscillators \cite{tsaneva2006diffusion}, while the fold singularity is frequently encountered in neuron models \cite{bertram2017multi,de2015neural,mitry2013excitable}.

Extending and further quantifying observations in~\cite{ArciEngelKuehn19} and \cite{EngelKuehn18} (where only the forward-Euler scheme is taken into account), we consider various, more general, time discretizations of equation~\eqref{fastequ} and investigate the linearized behaviour along their corresponding canard solutions. \blue{Note that the linearizations of these maps along trajetories give non-autonomous discrete time dynamical systems whose properties are strongly linked to the linear stability behavior of the corresponding methods but not a priori explained by them: discretization and linearization do not necessarily commute and, hence, the different methods deserve a detailed analysis in this context.}

For example, the canonical form of the transcritical singularity in a fast-slow system \blue{for which a maximal canard exists} reads (up to leading order)
\begin{align} \label{ODE_transcrit_overview}
\begin{array}{r@{\;\,=\;\,}l}
x' & x^2 - y^2 + \blue{\lambda} \epsilon, \\
y' & \epsilon,
\end{array}
\end{align}
\blue{where $\lambda$ is varied around $1$, depending on higher order terms and $\ve$. Taking $\lambda=1$ for the simplest case,}
its forward-Euler discretization induces the map $P:\R^2\to\R^2$ given by
\begin{equation}
	P(x,y) = \left(x+h\left(x^2-y^2\right)+h\ve,y+h\ve\right).
\end{equation}
Analogously to the time-continuous case, the set $\left\{ x^2=y^2 \right\}$ is invariant under the iteration of $P$. In particular, \blue{for $x_0 < 0$}, the orbit
\begin{equation}
	\gamma_{x_0}(n)=(x_n,y_n)=(x_0+nh\ve,x_0+nh\ve),\quad n\in\N,
\end{equation}
corresponds to the ``discrete-time maximal canard'' in \cite{EngelKuehn18}, \blue{starting on the attracting branch $\{x = y < 0 \}$ and continuing on the repelling branch $\{x = y > 0 \}$.}
As we will prove in Section \ref{sec:rk_transcrit}, one now observes that trajectories that start close to the attracting part of such a canard, stay close to the repelling part of the canard for very long times; something one cannot observe in the time-continuous case. 
\blue{The effect of higher order terms in this generic canonical form can be neglected locally such that our calculations and results stay valid for the general case. In particular, when using the blow-up method for dealing with the non-hyperbolic singularity at the origin, in the rescaling chart at the origin analysis of equation~\eqref{ODE_transcrit_overview} is all that is needed to understand the dynamics around the singularity (see e.g.~\cite{ks2001/2}).}

In the case of the pitchfork canonical form \blue{(up to leading order) and for which a maximal canard exists}
\begin{align*} 
\begin{array}{r@{\;\,=\;\,}l}
x' &= x(y - x^2) \blue{+ \lambda \ve},  \\
y' &= \epsilon \,,
\end{array}
\end{align*}
\blue{the parameter $\lambda$ has the same role as in the transcritical case, now varying around $0$. Taking $\lambda=0$,} the set $\left\{ x=0 \right\}$ is invariant, and the forward-Euler discretization map~\eqref{Euler_pitchfork} has the canard trajectory
\begin{equation*}
    \gamma_{y_0}(n)=(0,y_0+nh\ve),\qquad n\in\N.
\end{equation*} 
We observe the same extended delay effect along the \blue{maximal} canard as in the transcritical case (cf.~\cite{ArciEngelKuehn19}), as we will make precise in Section~\ref{sec:pitchfork}.

Generally, for explicit Runge-Kutta methods, the same effects appear. These delay times of the onset of instability can in fact grow arbitrarily large as we will prove in Section~\ref{sec:rk_transcrit} by investigating contraction and expansion rates along the linearization of the maximal canard. The main result concerning arbitrarily long delay of bifurcations for transcritical and pitchfork singularities can be formulated in the following Theorem, which is sketched in Figure \ref{fig:discrete1}.

\begin{theorem}[Delay for explicit RK schemes] \label{thm:RungeKutta}

Consider an explicit Runge-Kutta discretization, with step size $h>0$, of equation~\eqref{fastequ} being the canonical form of a fast-slow system with transcritical or pitchfork singularity, with parameters such that there is a canard solution.
%
Denoting by $x_0=- \rho <0$ the initial $x$-coordinate, we have the following for the transcritical canard (and analogously for the $y$-coordinate in the pitchfork case):
\begin{enumerate}

\item For any $h,\ve>0$, there exists a maximal canard trajectory $\gamma_{-\rho}$ for the discrete-time system induced by the Runge-Kutta scheme.

\item If $x^*$ denotes the $x$-coordinate where the contraction/expansion rate around $\gamma_{-\rho}$ is compensated, i.e.~where trajectories, starting close enough to $\gamma_{-\rho}$, leave a vicinity of $\gamma_{-\rho}$, then there exist values $(\rho^*,h^*,\ve^*)$ such that
\begin{equation}
\lim_{(\rho,h,\ve) \to (\rho^*,h^*,\ve^*)} x^* \to \infty.
\end{equation}

\item There is a particular lower bound  $K^*$ for the number of iterations corresponding with $x^*$, expressed by the Lambert $W$ function.
\end{enumerate}
\end{theorem}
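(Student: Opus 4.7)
The plan splits along the three items of the statement; I describe the transcritical case, the pitchfork being identical after relabelling the invariant line.

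\emph{Existence of the discrete canard.} I would first observe that the diagonal $\{x=y\}$ is an invariant manifold of the canonical transcritical system, because $f(x,x,\ve) = \ve = g(x,x,\ve)$. Starting an explicit Runge--Kutta stage from $(x,x)$ therefore produces equal $x$- and $y$-increments, and a short induction on the stages shows that every intermediate stage remains on the diagonal; combined with the consistency condition $\sum_i b_i = 1$, the restricted one-step map is $(x,x) \mapsto (x+h\ve,\,x+h\ve)$, giving the canard trajectory $\gamma_{-\rho}(n) = (-\rho+nh\ve,\,-\rho+nh\ve)$ for every $h,\ve > 0$.

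\emph{Blow-up of $x^*$.} Next I would linearise the RK map along $\gamma_{-\rho}$. The transverse per-step multiplier $m_n$ equals $R(2hx_n)$, where $R$ is the stability function of the method (for forward Euler $R(z) = 1 + z$, recovering $m_n = 1 + 2hx_n$). Defining $x^*$ by the linear balance $\sum_{n=0}^{K^*-1}\ln|m_n| = 0$ and substituting $u_n = 1 + 2hx_n = (1 - 2h\rho) + 2h^2\ve n$ in the Euler case, the key observation is that as $\rho \to (2h)^{-1}$ from below the first contribution $\ln u_0$ diverges to $-\infty$; this singular one-step contraction can only be compensated by summing over an arbitrarily long expanding tail, so $K^* \to \infty$ and therefore $x^* = -\rho + K^* h\ve \to \infty$. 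A candidate limiting triple is thus $(\rho^*, h^*, \ve^*) = (1/(2h^*), h^*, \ve^*)$; for a general explicit RK the analogous argument reduces to locating a real zero or a sufficiently deep minimum of $R$.

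\emph{Lambert $W$ bound.} To obtain item (3), I would isolate the singular term in the sum and apply Stirling to the remainder to get the approximate balance $\ln u_0 + N\ln(2h^2\ve N/e) = 0$. The substitution $M = 2h^2\ve N/e$ transforms this into $M \ln M = (2h^2\ve/e)\ln(1/u_0) =: c$, which is exactly the Lambert equation $M = e^{W(c)}$; back-substituting yields a lower bound of the form $K^* \geq (e/(2h^2\ve))\,e^{W(c)}$, with $c$ diverging as $\rho \to (2h)^{-1}$.

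The principal difficulty sits in the second part: the linearised prediction has to be upgraded to a statement about the genuine nonlinear map, and since $x^*$ itself diverges, the linearisation eventually breaks down, so a careful shadowing or Gr\"onwall-type argument is required to ensure that the nonlinear remainder does not destroy the linear balance before it is reached. A secondary, essentially mechanical, point is the chain-rule verification that the transverse multiplier on the canard is indeed $R(2hx_n)$ for a general explicit RK, which follows by linearising stage by stage and using that the slow equation $y' = \ve$ contributes no transverse coupling at leading order.
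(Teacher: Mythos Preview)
Your outline is broadly correct and parallels the paper's argument, but two technical points deserve attention.

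First, the identification $m_n = R(2hx_n)$ with the stability function is only exact when $\ve=0$. Because the intermediate stages on the diagonal sit at $(x_n+h\ve A_i,\,x_n+h\ve A_i)$ with $A_i=\sum_{j<i}a_{ij}$, the actual transverse multiplier is $1+hQ_s(x_n)$ with $Q_s$ carrying $O(h\ve)$-corrections on top of $R(2hx_n)$; the paper computes this in full (its Lemma on the variational equation and the formula for $Q_s$). For the qualitative blow-up conclusion this does not hurt---the critical triplet is defined as a zero of $1+hQ_s(-\rho)$ and merely acquires a mild $\ve$-dependence---but your ``chain-rule verification'' paragraph should not claim equality with $R$, only coincidence at $\ve=0$.

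Second, for item~(3) the paper obtains the Lambert~$W$ lower bound via the finite Jensen inequality applied to $\sum_k\ln(a+bk)$ (and, for general $s$, additionally Faulhaber's formula), which yields a genuine inequality rather than an asymptotic. Your Stirling route produces the same Lambert~$W$ structure but, as written, gives an approximate balance; to match the statement you would need the explicit Stirling error terms to turn it into a bona fide lower bound. Either approach works, but Jensen is cleaner here. Finally, the ``principal difficulty'' you flag---upgrading from the linearisation to the nonlinear map---is a legitimate concern, yet the paper's theorem is proved entirely at the level of the variational equation (the informal phrase about nearby trajectories notwithstanding), so no shadowing argument is required for what is actually claimed.
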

\begin{figure}[htbp]
        \centering
        \begin{subfigure}{.4\textwidth}
        \centering
  		\begin{overpic}[width=1.0\textwidth]{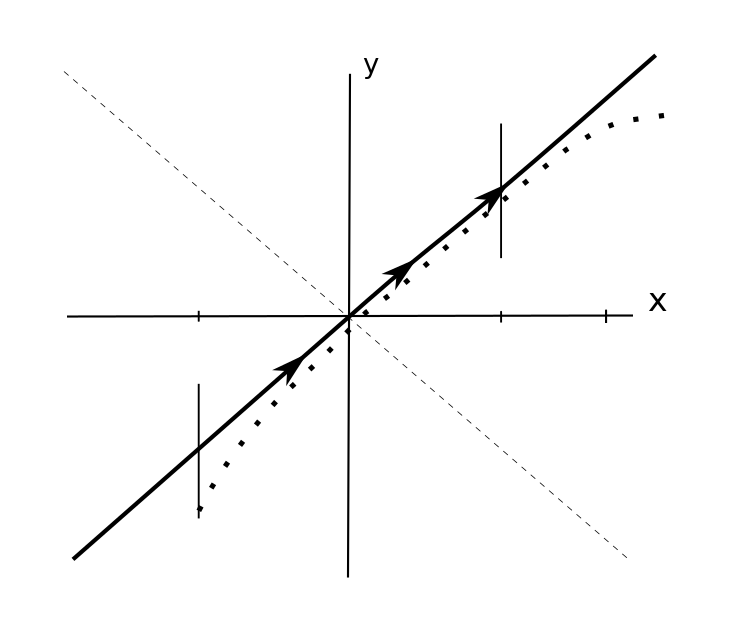}            
                \put(21,36){\small $- \rho$}
                \put(65,36){\small $ \rho$}
                \put(80,36){\small $x^*$}
        \end{overpic}
        \caption{Transcritical canard}
		\end{subfigure}%
        \begin{subfigure}{.4\textwidth}
        \centering
  		\begin{overpic}[width=1.0\textwidth]{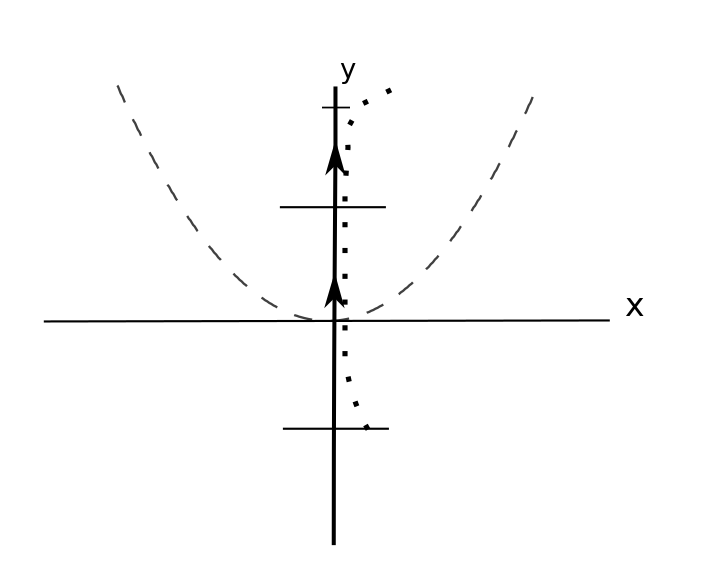}   
  				\put(32,22){\small $- \rho$}
                \put(35,54){\small $ \rho$}
                \put(37,65){\small $y^*$}
        \end{overpic}
        \caption{Pitchfork canard}
		\end{subfigure}
		\caption{The sketches illustrate Theorem~\ref{thm:RungeKutta}. The solid line with arrows indicates a canard, while the dotted curve is a nearby trajectory. The point $x^*$ (resp. $y^*$ for the pitchfork) indicates the $x$-coordinate where the contraction and the expansion along the canard have compensated each other.
		We show that, when an explicit RK-discretization is employed, the delayed loss of stability in planar fast-slow maps with transcritical and pitchfork singularities is not symmetric (in contrast to their continuous-time analogues). In fact the delay can be arbitrarily large. See more details in Section \ref{sec:RK} and a numerical example in Section \ref{sec:numerics}. }
        \label{fig:discrete1}
\end{figure}
For ODEs of the form~\eqref{fastequ}, the delay of bifurcation (for example for Hopf bifurcations \cite{hayes2016geometric,neishtadt1987persistence,neishtadt1988persistence}) can be characterised by the \emph{way-in/way-out map} (or \emph{input-output function}) derived in the following way (cf.~\cite[Section 12.2]{ku2015}): define the phase 
$$ \Psi(\tau):= \int_{\tau_*}^{\tau} \lambda_1(y^0(s) ) \rmd s, $$
where $y^0(s)$ denotes the slow-flow solution for equation~\eqref{fastequ}, \blue{$\lambda_1(y)$ the first eigenvalue of $(\rmD_x f) (y)$ at the critical set $\cS_0$} and $\tau_*$ is such that $y^0(\tau_*) =0$, i.e.~where $y^0(s)$ is passing through the bifurcation point. Then for $\tau$ sufficiently close to $\tau_*$, one can define the \emph{way-in/way-out map} $\Pi$ as the function that maps a time $\tau < \tau_*$ to a time $\Pi(\tau) > \tau_*$ by the condition
\begin{equation}\label{conttime:wayinwayout}
    \Re [\Psi(\tau)] = \Re [\Psi(\Pi(\tau))].
\end{equation}
The essential statement of Theorem~\ref{thm:RungeKutta} is that for explicit Runge-Kutta discretizations of transcritical and pitchfork canards, the way-in/way-out map, which gives precisely $\Pi(\tau)= \tau$ in the ODE situation, will explode for certain parameter regimes of $\rho$ and $h$.

In the second part of the paper, we will see that the bilinear Kahan-Hirota-Kimura discretization scheme, also abbreviated as \emph{Kahan method} and known for preserving conserved quantities in various cases (see e.g.~\cite{PetreraPfadlerSuris09, PetreraPfadlerSuris11, PetreraSuris18}), is more suitable for finding a way-in/way-out map with analagous properties to the ODE situation.
We show in Section~\ref{sec:kahan_transcritical} that the Kahan method, which is in fact an implicit Runge-Kutta scheme that yields an explicit form for quadratic vector fields, preserves precisely the time-continuous structure of the transcritical canard in the sense, that the time trajectories spend near the attracting part of the maximal canard is the same that they spend near the repelling part. In other words, the discrete-time way-in/way-out map, defined in analogy to~\eqref{conttime:wayinwayout}, is completely symmetric in this case.

The Kahan method is not explicit in the case of the pitchfork singularity due to the cubic term which seems to make the analysis more cumbersome than for the transcritical case.
However, we can still analyze the linearization along the maximal canard in this, now implicit, discretization and obtain analogous results to the transcritical case. \blue{For the pitchfork problem, we will additionally embed the Kahan method into a more general scheme of A-stable, symmetric second-order methods, demonstrating that a part of these methods has the desired preservation properties and the other part does not, depending on an additional parameter.}
In the case of a folded canard, again the Kahan method turns out to be an accurate behaviour-preserving discretization scheme for the canard problem \cite{Engeletal2019}. In this case, the explicit RK-methods do not even give a discrete-time singular canard trajectory such that the discrete-time dynamics could not be analyzed as before. Hence, for the folded canard, we only focus on the Kahan map (see Section~\ref{sec:fold}).

We can summarise the results on the symmetry of the way-in/way-out map for the Kahan discretization of transcritical, pitchfork and fold singularities in the following Theorem, which is sketched in Figure \ref{fig:discrete2}.
\begin{theorem}[Symmetry for Kahan method] \label{thm:Kahan}
For any $h, \epsilon > 0$ (except for one singular combination), consider the entry point $x(0)= - \rho < 0$ (and $y(0)= - \rho < 0$ in the pitchfork case). Then we have:

\begin{enumerate}
    \item There exists a maximal canard trajectory $\gamma_{-\rho}$ for the discrete-time system induced by the Kahan scheme.
    
    \item If, in the case of the transcritical or pitchfork singularity, we have  $\rho = \epsilon h N + \epsilon h/2$ for some $N \in \mathbb{N}$, or in the case of the fold singularity we have $\rho = \epsilon h N/2$ for some $N \in \mathbb{N}$, then
the way-in/way-out map $\psi_h$ for $\gamma_{-\rho}$
is well defined and takes the value 
$$\psi_h(-N) = N.$$
Otherwise, 
\begin{equation*}
\psi_h(-N) \in \{ N+1, N+2\}.
\end{equation*}
In other words, the expansion has compensated for contraction at 
$$x^* \in \{\rho - \epsilon h, \rho, \rho + \epsilon h\}, \quad ( y^* \in \{\rho - \epsilon h, \rho, \rho + \epsilon h\} \ \text{in the pitchfork case} )$$
giving full symmetry of the entry-exit relation.
    
\end{enumerate}

\end{theorem}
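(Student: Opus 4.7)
My strategy is to handle all three singularities by the same scheme: write the Kahan update explicitly, identify the invariant line carrying the maximal canard, linearize along it in closed form, and exploit a reversal symmetry of the linearization to deduce the way-in/way-out identity.

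Consider first the transcritical canonical form $x' = x^2 - y^2 + \epsilon$, $y' = \epsilon$. The Kahan rule yields the explicit update
\[
x_{n+1} = \frac{x_n + h(\epsilon - y_n y_{n+1})}{1 - h x_n},\qquad y_{n+1} = y_n + h\epsilon,
\]
and a direct substitution shows that the diagonal $\{x = y\}$ is invariant, so setting $a_n := -\rho + nh\epsilon$ yields the maximal canard $\gamma_{-\rho}(n) = (a_n,a_n)$, giving part~(1). Differentiating the Kahan update at $\gamma_{-\rho}$ produces, after cancelling a common factor $(1 - h a_n)$ from the numerator, the central identity
\[
\left.\frac{\partial x_{n+1}}{\partial x_n}\right|_{\gamma_{-\rho}} = \frac{1 + h a_{n+1}}{1 - h a_n}.
\]
The ``one singular combination'' excluded in the hypothesis is precisely the parameter choice that makes $1 - h a_n$ vanish at some iterate.

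The way-in/way-out identity then becomes combinatorial. The product of linearizations telescopes up to an index shift:
\[
\prod_{n=0}^{K-1}\frac{1 + h a_{n+1}}{1 - h a_n} \;=\; \frac{\prod_{m=1}^{K}(1 + h a_m)}{\prod_{n=0}^{K-1}(1 - h a_n)}.
\]
In the resonant case $\rho = (N+1/2)h\epsilon$ one has $a_n = (n - N - 1/2)h\epsilon$, hence the reflection property $a_{2N+1-n} = -a_n$. Choosing $K = 2N+1$ and substituting $m \mapsto 2N+1-m$ in the numerator turns it into the denominator, so the product equals $1$ and $\psi_h(-N) = N$ exactly. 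In the off-resonant regime the same product is monotonic in $K$ and each single factor is $1 + \mathcal{O}(h\epsilon)$ near the origin, so the compensation iterate lies in $\{N+1,N+2\}$ and $x^* \in \{\rho - h\epsilon,\rho,\rho + h\epsilon\}$ as claimed.

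For the pitchfork singularity the Kahan update is implicit because of the cubic term, but the line $\{x=0\}$ is still invariant and yields the canard $\gamma_{-\rho}(n) = (0, -\rho + nh\epsilon)$; linearizing the implicit defining equation in $x$ produces a scalar rational map of exactly the same form with $a_n$ now standing for the $y$-coordinate, so the telescoping and reflection argument carries over verbatim. For the fold, whose canonical form is again quadratic, one runs the same plan along the appropriate invariant slow trajectory, the shift $\rho = h\epsilon N/2$ reflecting the factor of two in the leading eigenvalue; the required existence of this discrete slow trajectory is where I would invoke \cite{Engeletal2019}. The main obstacle I foresee is deriving the closed-form linearization in the pitchfork case despite the implicit Kahan map, which must be done by differentiating the defining equation and solving for $\partial x_{n+1}/\partial x_n$ rather than by a direct substitution, and, for the fold, identifying the parametrisation of the critical manifold that produces the same telescoping identity.
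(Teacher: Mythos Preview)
Your approach is essentially the paper's: find the invariant canard line, compute the scalar linearization $J_h$ along it, and use a reflection identity to show the product equals $1$ in the resonant case. Your factored form $J_h(a_n)=\dfrac{1+ha_{n+1}}{1-ha_n}$ for the transcritical case is in fact exactly the paper's $J_h(x)=\dfrac{1 - h^2 x^2 - x\epsilon h^3 + \epsilon h^2}{(1-hx)^2}$ after factoring the numerator as $(1-hx)(1+hx+\epsilon h^2)$, so the two arguments are the same once simplified; your index-reflection is just a repackaging of the paper's pairwise identity $J_h(\epsilon h\tfrac{2n-1}{2})\,J_h(\epsilon h\tfrac{-2n-1}{2})=1$. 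The pitchfork linearization likewise factors to $\dfrac{1+\tfrac{h}{2}a_{n+1}}{1-\tfrac{h}{2}a_n}$, matching the paper's formula. For the fold the paper does not cite but explicitly exhibits the invariant parabola $y=x^2-\tfrac{\epsilon}{2}-\tfrac{\epsilon^2h^2}{8}$ and shows $J_h(x)=\dfrac{c+hx}{c-hx}$ with $c=1+\tfrac{h^2\epsilon}{4}$, whence $J_h(x)J_h(-x)=1$; this is the ``parametrisation'' you anticipate, and it fits your scheme with the reflection $b_{2N-n}=-b_n$ rather than an index-shift telescope.

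The one genuine gap is your off-resonant argument. Saying ``the product is monotonic in $K$ and each factor is $1+\mathcal{O}(h\epsilon)$ near the origin'' does not by itself pin down $\psi_h(-N)\in\{N+1,N+2\}$: monotonicity in $K$ only tells you the product eventually exceeds $1$, not at which step. What you need, and what the paper uses, is monotonicity of $J_h$ in its \emph{argument}: since $J_h$ is strictly increasing, the product starting at $-\rho\in(\epsilon h N+\epsilon h/2,\,\epsilon h(N{+}1)+\epsilon h/2)$ is sandwiched between the two adjacent resonant products, giving $\psi_h(-N)>N$ and $\psi_h(-N)\le N+2$ directly. Add that comparison and your argument is complete.
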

\begin{figure}[htbp]
        \centering
        \begin{subfigure}{.4\textwidth}
        \centering
  		\begin{overpic}[width=1.0\textwidth]{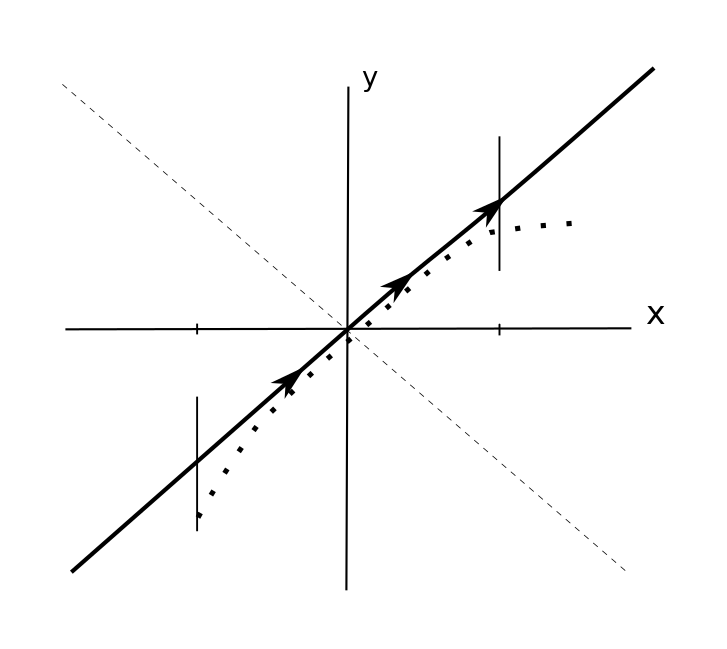}            
                \put(21,38){\small $- \rho$}
                \put(67,38){\small $ \rho$}
        \end{overpic}
        \caption{Transcritical canard}
		\end{subfigure}%
        \begin{subfigure}{.4\textwidth}
        \centering
  		\begin{overpic}[width=1.0\textwidth]{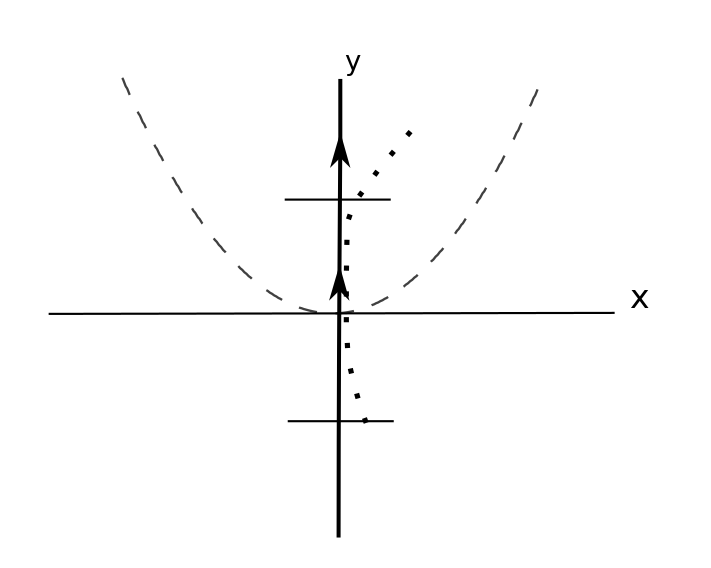}   
  				\put(30,20){\small $- \rho$}
                \put(35,52){\small $ \rho$}
        \end{overpic}
        \caption{Pitchfork canard}
		\end{subfigure}
		\begin{subfigure}{.4\textwidth}
        \centering
  		\begin{overpic}[width=1.0\textwidth]{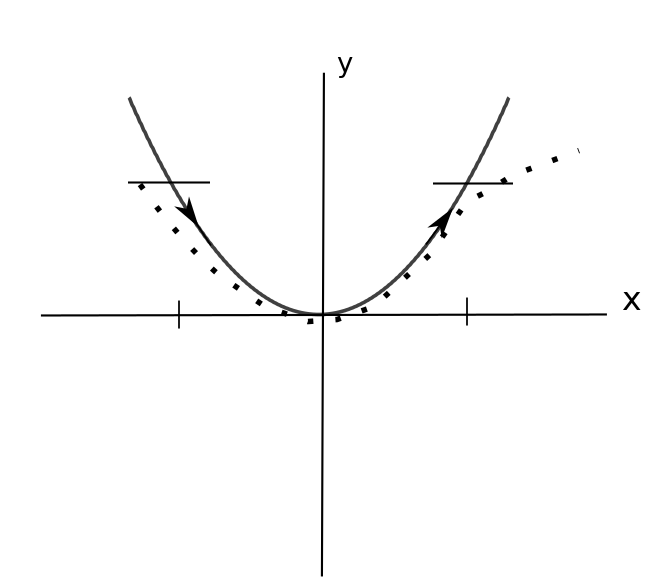}            
                \put(21,35){\small $- \rho$}
                \put(67,35){\small $ \rho$}
        \end{overpic}
        \caption{Folded canard}
        \label{discrete_lambdasmall}
		\end{subfigure}%
		\caption{The sketches illustrate Theorem~\ref{thm:Kahan}. The solid curve with arrows indicates a canard, while the dotted curve is a nearby trajectory. We show that, when a Kahan discretization scheme is employed, the delayed loss of stability in planar fast-slow maps with transcritical, pitchfork, and fold singularities is symmetric (in accordance with their continuous-time analogues). See more details in Section \ref{sec:Kahan} and a numerical example in Section \ref{sec:numerics}.}
        \label{fig:discrete2}
\end{figure}

In the forthcoming Sections we prove Theorems \ref{thm:RungeKutta} and \ref{thm:Kahan}. More specifically, Section \ref{sec:RK} is dedicated to the analysis of fast-slow planar maps under explicit Runge-Kutta discretization while in Section \ref{sec:Kahan} we consider the Kahan-Hirota-Kimura scheme. Afterwards, in Section \ref{sec:numerics} we exemplify our theoretical results with a numerical simulation. We conclude this paper in Section \ref{sec:discussion} with a brief discussion.

\section{Explicit Runge-Kutta methods and extended loss of stability} \label{sec:RK}

This section is dedicated to the study of\textbf{} delayed loss of stability under explicit Runge-Kutta (RK) discretization schemes. In order to clearly present the main ideas, we first consider a fast-slow transcritical singularity under a forward Euler discretization, where we show the possibility of arbitrarily delayed loss of stability. Next, we show that the same conclusion holds for general explicit RK methods. Later we present a similar result for the pitchfork singularity.

\subsection{Transcritical singularity} \label{sec:rk_transcrit}

The canonical form  of the transcritical singularity in a fast-slow system reads as
\begin{align*}
\begin{array}{r@{\;\,=\;\,}l}
x' & x^2 - y^2 + \blue{\lambda} \ve \blue{+h_1(x,y, \epsilon)}, \\
y' & \epsilon \blue{( 1 + h_2(x,y, \epsilon))}\,,
\end{array}
\end{align*}
where $0 < \epsilon \ll 1$ is the time scale separation parameter, \blue{$\lambda \approx 1$ is an unfolding parameter for canards and
\begin{align*}
    h_1(x,y,\epsilon) &= \mathcal O \left(x^3, x^2y, xy^2, y^3, \epsilon x, \epsilon y, \epsilon^2 \right) \,,\\
     h_2(x,y,\epsilon) &= \mathcal O \left(x, y, \epsilon \right).
\end{align*}
We will neglect the terms $h_1$ and $h_2$ in the following since, locally around the origin, they can be understood as small perturbations not changing the dynamical behavior; we can therefore also set $\lambda=1$. Hence, our model system, being easily generalized to any transcritical fast-slow problem, reads
\begin{align} \label{ODE_transcrit}
\begin{array}{r@{\;\,=\;\,}l}
x' & x^2 - y^2 + \epsilon, \\
y' & \epsilon\,.
\end{array}
\end{align}
}

\subsubsection{Euler method}

In this section we consider the iterated map obtained after forward-Euler discretization of the canonical form of a fast-slow transcritical singularity, see \cite{EngelKuehn18} for the details. In particular, we consider the so-called canard case. To be precise, let us consider the map $P:\R^2\to\R^2$ given by
\begin{equation}
	P(x,y) = \left(x+h\left(x^2-y^2\right)+h\ve,y+h\ve\right),
\end{equation}
where $0<\ve\ll1$, and $h>0$. We are interested in the dynamical system defined by iterations of the map $P$, that is $P^n(x_0,y_0)$ where $(x_0,y_0)\in\R^2$ are initial conditions and $n\in\N$. Note that the set $\cS=\left\{ (x,y)\in\R^2\, : \, x^2=y^2 \right\}$ is invariant under the iteration of $P$. In particular, \blue{for $x_0 < 0$}, the trajectory
\begin{equation}
	\gamma_{x_0}(n)=(x_n,y_n)=(x_0+nh\ve,x_0+nh\ve),\quad n\in\N,
\end{equation}
corresponds to the ``discrete-time maximal canard'' in \cite{EngelKuehn18}, \blue{starting on the attracting branch $\{x = y < 0 \}$ and continuing on the repelling branch $\{x = y > 0 \}$}, and shall play an essential role in our analysis.

\begin{remark}
	Our goal is to give details on the contraction/expansion rate around $\gamma_{x_0}$ in terms of $(x_0,\ve,h)$. This is motivated by \cite{EngelKuehn18}, where besides a thorough analysis of the fast-slow discrete time transcritical singularity, it is shown that the contraction towards the maximal canard is stronger in discrete time compared to its counterpart in continuous time, see \cite[Theorem 3.1, (T3)]{EngelKuehn18}.
\end{remark}

Proceeding with our analysis, we note that $\gamma_{x_0}|_{\left\{ \ve=0\right\}}$ is attracting for $x_0<0$ and repelling for $x_0>0$. Therefore, let $\rho\in O(1)$ be a positive constant and set $x_0=-\rho$. Next, we compute the variational equation of $P^n$ along $\gamma_{x_0}$, which yields
\begin{equation}\label{eq:variational}
	v(n+1)= \begin{pmatrix}
		1+2(-\rho+nh\ve)h & -2(-\rho+nh\ve)h\\ 0 & 1
		\end{pmatrix}v(n),
\end{equation}
where $v(n)=(v_1(n),\,v_2(n))\in\R^2$ and $n\in\N$. The local contraction/expansion rate is characterised by the solutions of \eqref{eq:variational} in the transversal (hyperbolic) direction, corresponding to the eigenvector $(1,0)^\top$. Thus, the solution of \eqref{eq:variational} with initial condition $(v_1(0),\,v_2(0))=(1,0)$ is given by $(v_1(n),0)$ where
\begin{equation}\label{eq:trans-rate}
v_1(n)=\prod_{k=0}^{n-1}\left(1-2h(\rho-kh\ve)\right), \quad n\in\N,\, n>0.	
\end{equation}

It is precisely the number given by $v_1(n)$ that provides the overall contraction/expansion rate near $\gamma_{x_0}$. As an example, suppose that there exists a $K\in\N$ that is the solution of
\begin{equation}
1=\prod_{k=0}^{K-1}\left(1-2h(\rho-kh\ve)\right).	
\end{equation}

Then $K$ gives the number of iterations after which the contraction towards $\gamma_{x_0}$ and the expansion away from $\gamma_{x_0}$, in the $x$-direction, have compensated each other. In other words, the time $K$ is the discrete analogue of the \emph{asymptotic moment of jumping} \cite{neishtadt2009stability} in continuous time.  We can prove the following.

\begin{proposition}\label{prop:trans-euler} Consider the inequality
	\begin{equation}\label{eq:ineq1}
		\prod_{k=0}^{K-1}\left(1-2h(\rho-kh\ve)\right)\geq1,	
	\end{equation}
	where $K\in\N$ and $K>1$. If $1-2h\rho>0$ and the inequality holds, then
	\begin{equation}\label{eq:KstarEuler}
		K\geq K^*=\frac{1}{h^2\ve}\left( -1+2h\rho + \exp\left(W\left(-h^2\ve\ln(1-2\rho h)\right)\right)\right), 
	\end{equation}
	where $W$ denotes the Lambert W function. Furthermore, let $x^*=-\rho+K^*h\ve$ for some fixed values of $(h,\ve)$. Notice that $x^*$ is the $x$-coordinate where the contraction/expansion rate around the canard is compensated. Then \begin{equation}
	\lim_{\rho\to\frac{1}{2h}} x^* = \infty.
    \end{equation}

\end{proposition}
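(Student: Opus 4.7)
The condition $1-2h\rho>0$ together with $\rho>0$ guarantees that every factor in~\eqref{eq:ineq1} is strictly positive, so taking logarithms is legitimate and the inequality becomes
\begin{equation*}
    S(K) \;:=\; \sum_{k=0}^{K-1}\ln\bigl(1-2h\rho + 2kh^2\ve\bigr) \;\geq\; 0.
\end{equation*}
The sum decomposes into a contracting block (for $k<\rho/(h\ve)$, with negative terms) and an expanding block (for $k>\rho/(h\ve)$, with positive terms). The crucial feature is that, as $\rho\to 1/(2h)$, the single $k=0$ summand $\ln(1-2h\rho)$ blows up to $-\infty$, so the expanding block must be long enough to compensate, which pins down a minimal $K$ in terms of $\rho, h, \ve$.

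The strategy is to produce a sharp upper bound on $S(K)$ whose vanishing condition coincides with the stated formula for $K^*$. The target transcendental equation is
\begin{equation*}
    z\,\ln z \;=\; -h^2\ve\,\ln(1-2h\rho), \qquad z \;:=\; 1-2h\rho + h^2\ve K,
\end{equation*}
which one can hope to extract by rewriting the product as a Pochhammer symbol / ratio of Gamma functions,
\begin{equation*}
    \prod_{k=0}^{K-1}\bigl(c + 2k\delta\bigr) \;=\; (2\delta)^K\,\frac{\Gamma\!\left(c/(2\delta)+K\right)}{\Gamma\!\left(c/(2\delta)\right)}, \qquad c := 1-2h\rho,\; \delta := h^2\ve,
\end{equation*}
and applying Stirling's asymptotic expansion of $\ln\Gamma$ while carefully tracking the logarithmic singularity at $c\to 0$. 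Elementary estimates such as $\ln(1+x)\le x$ or AM--GM yield only the coarse bound $K\ge 1+2\rho/(h\ve)$, which stays \emph{bounded} as $\rho\to 1/(2h)$ and therefore cannot deliver the divergence claimed in part~(3); the sharper Lambert-$W$-shaped estimate is the main technical step and the principal obstacle of the proof.

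Once the equation $z\ln z = A$ with $A := -h^2\ve\ln(1-2h\rho) > 0$ is in hand, its inversion is immediate: setting $z = e^s$ (so $s>0$) turns it into $s\,e^s = A$, whose unique nonnegative solution is $s = W(A)$ by the defining property of the principal branch of the Lambert function. Hence $z = e^{W(A)}$, and unravelling the definition of $z$ gives
\begin{equation*}
    K^* \;=\; \frac{z - (1-2h\rho)}{h^2\ve} \;=\; \frac{1}{h^2\ve}\Bigl(-1 + 2h\rho + e^{W(-h^2\ve\,\ln(1-2h\rho))}\Bigr),
\end{equation*}
matching~\eqref{eq:KstarEuler}. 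Combined with the eventual monotonicity of the bounding function in $K$ (beyond the contracting regime), this yields $K\geq K^*$.

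Finally, for the asymptotic statement in part~(3): as $\rho\to 1/(2h)$ we have $1-2h\rho\to 0^+$, and hence $A = -h^2\ve\ln(1-2h\rho) \to +\infty$. Since the principal real branch of $W$ is strictly increasing on $[0,\infty)$ with $W(y)\to\infty$ as $y\to\infty$, it follows that $e^{W(A)}\to\infty$, so $h^2\ve K^*\to\infty$, and therefore
\begin{equation*}
    x^* \;=\; -\rho + K^*h\ve \;=\; \bigl(\rho - \tfrac{1}{h}\bigr) + \tfrac{1}{h}\,e^{W(-h^2\ve\,\ln(1-2h\rho))} \;\longrightarrow\; \infty,
\end{equation*}
since the first bracket remains bounded (tending to $-1/(2h)$) while the second term diverges.
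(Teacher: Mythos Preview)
Your plan correctly isolates the target transcendental equation $z\ln z = A$ (with $z = 1-2h\rho + h^2\ve K$ and $A = -h^2\ve\ln(1-2h\rho)$), and both the Lambert-$W$ inversion and the asymptotic argument for $x^*\to\infty$ are fine. The gap is the step in between: you never actually derive $z\ln z \ge A$ from the product inequality. You propose Gamma functions and Stirling and then write ``once the equation \ldots\ is in hand'', but nothing is in hand --- the Stirling computation is not carried out, and it is precisely this passage that carries all the content.

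More importantly, you dismiss the elementary route too quickly, and in doing so miss the actual argument. You are right that applying AM--GM/Jensen to \emph{all} $K$ factors yields only the coarse bound $K\ge 1+2\rho/(h\ve)$: the singular summand $\ln(1-2h\rho)$ gets averaged away. The fix is simply to \emph{isolate that term first}. With $a:=1-2h\rho\in(0,1)$ and $b:=2h^2\ve$, the hypothesis reads $\sum_{k=1}^{K-1}\ln(a+bk)\ge -\ln a$. Now apply Jensen (concavity of $\ln$) only to the left-hand sum:
\[
(K-1)\,\ln\!\Bigl(\tfrac{1}{K-1}\sum_{k=1}^{K-1}(a+bk)\Bigr)
\;=\;(K-1)\ln\!\bigl(a+\tfrac{b}{2}K\bigr)\;\ge\;-\ln a\;>\;0,
\]
which already forces $z:=a+\tfrac{b}{2}K>1$. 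Relaxing $K-1$ to $K$ (legitimate since $\ln z>0$) and then $z-a$ to $z$ gives $(z-a)\ln z\ge -\tfrac{b}{2}\ln a = A$ and hence $z\ln z>A$; monotonicity of $z\mapsto z\ln z$ on $(1,\infty)$ then yields $z\ge e^{W(A)}$, i.e.\ exactly $K\ge K^*$. So no Stirling is needed: Jensen with the $k=0$ term peeled off already produces the ``sharp upper bound on $S(K)$ whose vanishing condition coincides with the stated formula'' that you were looking for.
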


\begin{proof} To simplify the notations let $a:=1-2h\rho>0$, $b:=2h^2\ve>0$. Then \eqref{eq:ineq1} reads as
\begin{equation}\label{eq:ineq2}
	\prod_{k=0}^{K-1} (a+bk)\geq 1.
\end{equation}
Taking the natural logarithm on both sides of \eqref{eq:ineq2}, and isolating the term for $k=0$, one gets
\begin{equation}
	\begin{split}
 		\sum_{k=1}^{K-1}\ln(a+bk) &\geq -\ln(a).
 	\end{split}
 \end{equation} 
Next, using the finite form of Jensen's inequality\footnote{Let $\phi$ be a real, concave function and $\left\{ x_i \right\}$ a sequence of numbers in the domain of $\phi$. Then the finite form of Jensen's inequality that we use is $ \phi\left( \frac{\sum x_i}{n} \right)\geq \sum\frac{\phi(x_i)}{n}$. } one has
\begin{equation}\label{eq:ineq3}
	\ln\left( \frac{1}{K-1}\sum_{k=1}^{K-1}a+bk \right)\geq \frac{1}{K-1}\sum_{k=1}^{K-1}\ln(a+bk) \geq -\frac{1}{K-1}\ln(a).
\end{equation}
Hence, by disregarding the middle term in equation~\eqref{eq:ineq3}, and by simplifying the arithmetic series $\sum_{k=1}^{K-1}a+bk$, one obtains 
\begin{equation}
 	\begin{split}
 		K\ln\left( a+\frac{b}{2}K \right)>(K-1)\ln\left( a+\frac{b}{2}K   \right) &\geq -\ln(a).
	\end{split}
 \end{equation} 
Next, we disregard the middle term of the last inequality and define $z\in\R$ by $\exp(z)=a+\frac{b}{2}K$, leading to
\begin{equation}
	\begin{split}
		\exp(z)z>(\exp(z)-a)z>-\frac{b}{2}\ln(a),
	\end{split}
\end{equation}
where the last inequality holds due to $a\in(0,1)$. Thus, lower bound $z^*$ for $z$ is given by the solution of $\exp(z^*)z^*=-\frac{b}{2}\ln(a)$. Using the Lambert W function~\cite[Eq.~4.13]{NIST:DLMF} one gets $z^*=W\left( -\frac{b}{2}\ln(a)\right)$, which in turn provides a lower bound $K^*$, that is $K\geq K^*$, where
\begin{equation}
	K^*=\frac{2}{b}\left( \exp\left( W\left( -\frac{b}{2}\ln(a)\right)\right) - a \right).
\end{equation}
The proof is completed by re-substituting the values of $a$ and $b$. The computation of the limit follows from the fact that $\lim_{x\to\infty}W(x)=\infty$.%
\end{proof}
\blue{\begin{remark} \label{rem:euler}
Note that the condition $2\rho h < 1$ is in exact accordance with the stability criterion for the Euler method with respect to the Dahlquist test equation. The delay of exit from the repelling part of the critical curve goes to infinity when the boundary of this stability region is approached since the stabilization factors at the attracting parts of the critical set become larger and larger. In this way the effect of the asymmetric structure of the linearized non-autonomous dynamics along the two opposite sides of the origin is increased more and more. Hence, the crucial reason for the stabilization is the asymmetric nature of the linearization which then also affects the behavior of the full nonlinear problem.
\end{remark}}
\blue{\begin{remark} \label{rem:euler_hot}
Furthemore, we would like to point out that for small $h$ extreme delays are only expected for relatively large $\rho$. This might seem to contradict our argument for only using model system~\eqref{ODE_transcrit}, neglecting potential higher order terms due to their local insignificance. However, in a blow-up analysis at the origin, higher order terms can also be neglected for very large $\rho$ such that this case becomes generally relevant.
\end{remark}}
Next we extend the previous analysis to general explicit Runge-Kutta discretization schemes. 

\subsubsection{General explicit Runge-Kutta schemes}\label{sec:trans-RK}

Given a planar system
\begin{equation}
   \begin{split}
       x' &= f(x,y),\\
       y' &= g(x,y),
   \end{split}
\end{equation}
the $s$-stage \emph{explicit} Runge-Kutta method is given by
\begin{equation}
   \begin{split}
       x_{n+1} &=  x_n + h\sum_{i=1}^s\alpha_i\kappa_i\\
       y_{n+1} &=  y_n + h\sum_{i=1}^s\alpha_i\ell_i,
   \end{split}
\end{equation}
where
\begin{equation}
   \begin{split}
       \kappa_i &=f\left( x_n+h\sum_{j=1}^{i-1}a_{ij}\kappa_j, y_n+h\sum_{j=1}^{i-1} a_{ij}\ell_j \right)\\
       \ell_i &=g\left( x_n+h\sum_{j=1}^{i-1}a_{ij}\kappa_j, y_n+h\sum_{j=1}^{i-1} a_{ij}\ell_j \right),
   \end{split}
\end{equation}
and the coefficients $\alpha_i\in\R$ and $a_{ij}\in\R$ depend on the RK-scheme and can be chosen from the so-called \emph{Butcher tableau}. Thus, for a fast-slow system with a transcritical singularity \eqref{ODE_transcrit} one has
\begin{equation}\label{eq:rk-trans}
   \begin{split}
       x_{n+1} &= x_n + h\sum_{i=1}^s\alpha_i\kappa_i\\
       y_{n+1} &= y_n + h\sum_{i=1}^s\alpha_i\ell_i,
   \end{split}
\end{equation}
where $\ell_i=\ve$ and
\begin{equation}
   \kappa_i=\left( x_n+h\sum_{j=1}^{i-1}a_{ij}\kappa_j \right)^2 - \left(y_n+h\ve\sum_{j=1}^{i-1} a_{ij}\right)^2+\ve.
\end{equation}
\begin{remark}
The forward-Euler discretization is the $1$-stage explicit RK method.
\end{remark}

First we show that, similar to the Euler discretization, the maximal canard  $\gamma_{x_0}(n)=(x_n,y_n)=(x_n,x_n)$ is a solution of the $s$-stage RK discretization of the fast-slow transcritical singularity. To be more precise,  from \eqref{eq:rk-trans} let us consider the map $P_s:\R^2\to\R^2$ given by 
\begin{equation}\label{eq:P-trans}
 P_s(x,y)=\left( x + h\sum_{i=1}^s\alpha_i\kappa_i, \, y + h\ve\sum_{i=1}^s\alpha_i \right), \quad 
\end{equation}%
where
\begin{equation}\label{eq:P-k}
    \kappa_i=\left( x+h\sum_{j=1}^{i-1}a_{ij}\kappa_j \right)^2 - \left(y+h\ve\sum_{j=1}^{i-1} a
    _{ij}\right)^2+\ve.
\end{equation}
It is clear that iterations of the map $P_s$ define a discrete-time dynamical system. Next we show that, just as in the forward Euler case, the subset \blue{$\cD =\left\{ (x,y)\in\R^2\,:\, x=y \right\}\subset\cS$} is invariant under the iterations of $P_s$ for any $s\geq1$.
\begin{proposition}\label{prop:RK-canard} Consider \eqref{eq:P-trans}. Then $\cD =\left\{ (x,y)\in\R^2\,|\, x=y \right\}$ is invariant under the iterations of $P_s$. Moreover, if $\sum_{i=1}^s\alpha_i=1$, then the solutions along $\cD$ are given by $\gamma_{x(0)}(n)=(x(0)+\ve h n, x(0)+\ve h n)$.
\end{proposition}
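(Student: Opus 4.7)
The plan is to proceed by induction on the stage index $i$ and show that, whenever $x=y$, all the stage derivatives $\kappa_i$ collapse to $\varepsilon$. Once this is established, invariance of $\mathcal{D}$ follows immediately from the update rule, and the explicit form of the solution drops out of the normalization $\sum_{i=1}^s \alpha_i = 1$.

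Concretely, fix $(x,y) \in \mathcal{D}$, i.e.\ $x=y$. The claim to prove by induction on $i \in \{1, \ldots, s\}$ is
\[
\kappa_i = \varepsilon \quad \text{and} \quad h\sum_{j=1}^{i-1} a_{ij}\kappa_j \;=\; h\varepsilon \sum_{j=1}^{i-1} a_{ij}.
\]
The base case $i=1$ is immediate from \eqref{eq:P-k}: the inner sums are empty, so $\kappa_1 = x^2 - y^2 + \varepsilon = \varepsilon$. For the inductive step, assuming $\kappa_j = \varepsilon$ for all $j<i$, the two arguments of the squares in \eqref{eq:P-k} become
\[
x + h\sum_{j=1}^{i-1} a_{ij}\kappa_j = x + h\varepsilon \sum_{j=1}^{i-1} a_{ij}, \qquad y + h\varepsilon \sum_{j=1}^{i-1} a_{ij},
\]
which are equal since $x=y$. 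Hence the squared terms cancel and $\kappa_i = \varepsilon$.

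Given this, the update rule \eqref{eq:P-trans} yields
\[
P_s(x,y) = \left( x + h\varepsilon \sum_{i=1}^s \alpha_i,\; y + h\varepsilon \sum_{i=1}^s \alpha_i \right),
\]
so the two components of $P_s(x,y)$ remain equal, proving that $\mathcal{D}$ is forward-invariant under $P_s$. If in addition $\sum_{i=1}^s \alpha_i = 1$, then $P_s(x,x) = (x + h\varepsilon, x + h\varepsilon)$; a trivial induction on $n$ then gives $P_s^n(x(0), x(0)) = (x(0) + nh\varepsilon,\, x(0) + nh\varepsilon)$, which is exactly $\gamma_{x(0)}(n)$.

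There is no real obstacle here: the whole argument rests on the symmetry $f(x,x)=\varepsilon$ of the vector field combined with the fact that the RK update evaluates the fast component at arguments that differ by the same cumulative slow-increment $h\varepsilon \sum_j a_{ij}$ on both sides. The only point worth stating carefully is that this balance of arguments is itself a consequence of, and not independent of, the inductive hypothesis $\kappa_j = \varepsilon$; this is why the induction must be carried out stage by stage rather than by a single direct substitution.
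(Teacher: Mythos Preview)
Your proof is correct and follows essentially the same approach as the paper: both argue by induction on the stage index $i$ that $\kappa_i=\varepsilon$ whenever $x=y$, then read off invariance of $\mathcal{D}$ and the explicit form of $\gamma_{x(0)}$ from the update rule together with $\sum_i\alpha_i=1$. Your presentation is slightly more explicit about why the two squared arguments in \eqref{eq:P-k} coincide, but the logic is identical.
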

\begin{proof}
   Let $x=y$, then from \eqref{eq:P-k} it follows that
   \begin{equation}\label{eq:proof-ki}
       \kappa_i = 2xh\sum_{j=1}^{i-1}a_{ij}\kappa_j-2xh\ve\sum_{j=1}^{i-1}a_{ij}+h^2\left( \sum_{j=1}^{i-1}a_{ij}k_j\right)^2-h^2\ve^2\left( \sum_{j=1}^{i-1}a_{ij}\right)^2+\ve.
   \end{equation}
    Note that $\kappa_1=\kappa_2=\ve$. Next, assume that $\kappa_1=\cdots=\kappa_{i-1}=\ve$. Then, it follows immediately from equation \eqref{eq:proof-ki} that $\kappa_i=\ve$, for all $i=1,\ldots,s$. This implies
   \begin{equation}
       P_s(x,x) =\left( x+h\ve\sum_{i=1}^s\alpha_i,\, x+h\ve\sum_{i=1}^s\alpha_i\right),
   \end{equation}
   which shows that indeed the diagonal $\cD$ is invariant. The expression of $\gamma_{x(0)}(n)$ follows from the common assumption that $\sum_{i=1}^s\alpha_i=1$ which implies that $y_{n+1} = y_n + h\sum_{i=1}^s\alpha_i\ell_i=y_n + h\ve$, and whose solutions are as indicated. 
\end{proof}
Next, we are going to study the variational equation of \eqref{eq:rk-trans} along $\gamma_{x(0)}(n)$. For shortness of notation let $\hat\gamma=\gamma_{x(0)}(n)$. We have the following.

\begin{lemma}\label{lemm:RK-variational} The variational equation of \eqref{eq:rk-trans} along $\hat\gamma$ is given by
\begin{equation}\label{eq:RK-variational}
    v(n+1)=\begin{pmatrix}
         1+h\sum_{i=1}^s\alpha_i\pdv{\kappa_i}{x_n}|_{\hat\gamma} & h\sum_{i=1}^s\alpha_i\pdv{\kappa_i}{y_n}|_{\hat\gamma} \\
        0 & 1
    \end{pmatrix}v(n).
\end{equation}
Moreover the eigenvector corresponding to the eigenvalue $1$ is $(1,1)^\top$.

\end{lemma}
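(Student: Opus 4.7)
The plan is to compute the Jacobian of $P_s$ directly and then exploit the recursive structure of the $\kappa_i$ along $\hat\gamma$ to identify the eigenvector.

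First, the matrix form in \eqref{eq:RK-variational} is essentially immediate: the second component of $P_s$ is $y + h\epsilon \sum_i \alpha_i$, which is independent of $x_n$ and linear (with slope $1$) in $y_n$, so the second row of the Jacobian is $(0,\,1)$. The first row follows from the chain rule applied to $x + h\sum_i \alpha_i \kappa_i(x,y)$. The only work is therefore evaluating the partial derivatives of the $\kappa_i$ along $\hat\gamma$.

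Next, I would set $X_i := x_n + h\sum_{j<i} a_{ij}\kappa_j$ and $Y_i := y_n + h\epsilon \sum_{j<i} a_{ij}$, so that $\kappa_i = X_i^2 - Y_i^2 + \epsilon$. Along $\hat\gamma$ Proposition~\ref{prop:RK-canard} gives $x_n = y_n$ and $\kappa_j = \epsilon$ for all $j$, hence $X_i = Y_i$. Differentiating $\kappa_i$ and writing $A_i := \partial_{x_n}\kappa_i|_{\hat\gamma}$, $B_i := \partial_{y_n}\kappa_i|_{\hat\gamma}$, the recursions become
\begin{equation}
A_i = 2X_i\Bigl(1 + h\sum_{j<i} a_{ij} A_j\Bigr), \qquad B_i = 2X_i\Bigl(h\sum_{j<i} a_{ij} B_j - 1\Bigr).
\end{equation}
The key claim is then $A_i + B_i = 0$ for every $i$, which I would prove by induction on $i$: the base case $A_1 + B_1 = 2X_1 - 2X_1 = 0$ is immediate, and the inductive step gives $A_i + B_i = 2X_i h \sum_{j<i} a_{ij}(A_j + B_j) = 0$.

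Finally, setting $c := h\sum_i \alpha_i A_i$, the identity $A_i + B_i = 0$ implies $h\sum_i \alpha_i B_i = -c$, so the variational matrix along $\hat\gamma$ has the form
\begin{equation}
\begin{pmatrix} 1 + c & -c \\ 0 & 1 \end{pmatrix},
\end{equation}
whose eigenvalues are $1+c$ and $1$. Solving $(1+c)v_1 - c v_2 = v_1$ yields $v_1 = v_2$ (for the generic case $c \neq 0$), so $(1,1)^\top$ spans the eigenspace for the eigenvalue $1$. The main subtlety is the inductive cancellation $A_i + B_i = 0$, which crucially relies on the equality $X_i = Y_i$ along the canard (and hence on $\kappa_j = \epsilon$ from Proposition~\ref{prop:RK-canard}); everything else is routine differentiation.
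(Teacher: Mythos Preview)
Your proposal is correct and follows essentially the same approach as the paper: both compute the Jacobian directly, derive the same recursions for $\partial_{x_n}\kappa_i|_{\hat\gamma}$ and $\partial_{y_n}\kappa_i|_{\hat\gamma}$ (using that $\kappa_j=\epsilon$ along $\hat\gamma$ from Proposition~\ref{prop:RK-canard}), and prove the key identity $\partial_{x_n}\kappa_i+\partial_{y_n}\kappa_i=0$ by induction on $i$. A minor remark: your symbol $A_i$ for $\partial_{x_n}\kappa_i|_{\hat\gamma}$ clashes with the paper's earlier notation $A_i=\sum_{j<i}a_{ij}$, and in the degenerate case $c=0$ the matrix is the identity so $(1,1)^\top$ is still an eigenvector---no genericity assumption is needed.
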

\begin{proof}
The Jacobian of \eqref{eq:rk-trans} reads as
\begin{equation}
    J=\begin{pmatrix}
    1+h\sum_{i=1}^s\alpha_i\pdv{\kappa_i}{x_n} & h\sum_{i=1}^s\alpha_i\pdv{\kappa_i}{y_n} \\
    0 & 1
    \end{pmatrix}.
\end{equation}
Recall that $k_i|_{\hat\gamma}=\ve$, and let $A_i:=\sum_{j=1}^{i-1}a_{ij}$. Note that $A_1=0$. It follows  that
    \begin{align}
        \pdv{\kappa_i}{x_n}|_{\hat \gamma} &= 2\left( x_n+h\ve A_i\right)\left( 1+h \sum_{j=1}^{i-1}a_{ij}\pdv{\kappa_j}{x_n}|_{\hat\gamma}\right),\label{eq:par1}\\
        \pdv{\kappa_i}{y_n}|_{\hat \gamma} &= 2\left( x_n+h\ve A_i\right)\left( -1+h \sum_{j=1}^{i-1}a_{ij}\pdv{\kappa_j}{y_n}|_{\hat\gamma}\right).
    \end{align}
Next, note that $(1,1)^\top$ is the eigenvector for the eigenvalue $1$ of $J|_{\hat\gamma}$ if and only if $\pdv{\kappa_i}{y_n}|_{\hat\gamma}=-\pdv{\kappa_i}{x_n}|_{\hat\gamma}$ for all $i=1,\ldots,s$. For $i=1$ this holds trivially, for the rest of the terms one proves the equality by induction.
\end{proof}

For notation convenience, let $J_1(x_n)$ denote the first component of $J|_{\hat\gamma}$, that is $J_1(x_n)= 1+hQ_s(x_n)$, where, from \eqref{eq:par1}, we have
\begin{equation}
    Q_s(x_n) = \sum_{i=1}^s\alpha_i\pdv{\kappa_i}{x_n}|_{\hat\gamma} = \sum_{i=1}^s2\alpha_i\left( x_n+h\ve A_i\right)\left( 1+h \sum_{j=1}^{i-1}a_{ij}\pdv{\kappa_j}{x_n}|_{\hat\gamma}\right)
\end{equation}

\begin{remark}\label{rem:Qs}
$Q_s(x_n)$ is, generically, a polynomial in $x_n$ of degree $s$. Also, for sake of simplifying the notation, we are omitting the dependence of $Q_s$ on $(h,\ve)$.
\end{remark}

Recall also that for initial condition $x(0)=-\rho$, we have $\gamma_{-\rho}(n) = (x_n,y_n)=(-\rho + nh\ve,-\rho + nh\ve)$. Just as in the Euler method $J_1(-\rho)$ shall play an important role. In fact, let us define the following.
\begin{definition}\label{def:crit-triplet}
Let $(\rho,h,\ve)=(\rho^*,h^*,\ve^*)$ be a solution of $J_1(-\rho)=0$. Then we call $(\rho^*,h^*,\ve^*)$ \emph{a critical triplet}.
\end{definition}

\begin{remark}
For the forward Euler method the critical triplet is $(\rho^*,h^*,\ve^*)=(\frac{1}{2h},h,\ve)$.
\end{remark}

It follows from Lemma \ref{lemm:RK-variational} that the centre eigenvector is aligned with the invariant space $\cD$ along which the maximal canard $\gamma_{x(0)}$ is located. Therefore, to study the contraction/expansion rate along $\gamma_{x(0)}$ we consider the variational equation \eqref{eq:RK-variational} in the transverse direction to $(1,1)^\top$. For this, it suffices to consider \eqref{eq:RK-variational} with initial condition $v(0)=(1,0)$. Thus, with this setup, the solution $v_1(n)$ of the variational equation \eqref{eq:RK-variational} is given by
\begin{equation}\label{eq:RK-v1}
    v_1(n) = \prod_{k=0}^{n-1}  \left( 1+hQ_s(-\rho+kh\ve) \right).
\end{equation}
\begin{remark}
Note that \eqref{eq:variational} is obtained by choosing $s=1$ and the corresponding constants of the scheme in \eqref{eq:RK-v1}. Indeed for $s=1$, one has $\alpha_1=1$ and thus $Q_1(-\rho)=-2\rho$.
\end{remark}

\begin{proposition}\label{prop:RK-trans}
Consider the inequality
	\begin{equation}\label{eq:ineq1_rk}
		\prod_{k=0}^{K-1}\left( 1+hQ_s(-\rho+kh\ve) \right)\geq1,	
	\end{equation}
	where $K\in\N$ and $K>1$. If $1+hQ_s(-\rho)>0$, $K>2$, and inequality \eqref{eq:ineq1_rk} holds, then
	\begin{equation}\label{eq:RK-Kstar}
	   K\geq  K^* = 1+\exp\left( W\left( -\frac{\log(1+hQ_s(-\rho))}{\bar C (s+1)} \right) \right),
	\end{equation}
	where $W$ denotes the Lambert W function, and $\bar C$ is a constant that is determined by the choice of the RK scheme. Assume that a critical triplet $(\rho^*,h^*,\ve^*)$ exists\footnote{That is, the equation $1+hQ_s(-\rho)=0$ has at least one real triplet solution.}. Then  $\lim_{(\rho,h,\ve)\to(\rho^*,h^*,\ve^*)}K^*=\infty$. This means that if we define $x^*=-\rho+K^*h\ve$ (which is the $x$-coordinate at which the contraction/expansion rate is compensated), we have the limit \begin{equation}
	\lim_{(\rho,h,\ve)\to(\rho^*,h^*,\ve^*)} x^* = \infty.
	\end{equation}
\end{proposition}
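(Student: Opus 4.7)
The plan is to mirror the proof of Proposition \ref{prop:trans-euler}, adapting the arithmetic-series step to the fact that $Q_s$ is now a polynomial of degree $s$ in $x_n$ rather than a constant. First, I would take the natural logarithm of inequality \eqref{eq:ineq1_rk} and isolate the $k=0$ term to obtain
$$\sum_{k=1}^{K-1}\log\bigl(1+hQ_s(-\rho+kh\ve)\bigr) \;\geq\; -\log\bigl(1+hQ_s(-\rho)\bigr).$$
Under the hypothesis $1+hQ_s(-\rho)>0$ and the implicit contraction assumption $1+hQ_s(-\rho)<1$, the right-hand side is strictly positive. Next, as in the Euler case, I would apply the finite form of Jensen's inequality to the concave function $\log$, yielding
$$(K-1)\log \bar M \;\geq\; -\log\bigl(1+hQ_s(-\rho)\bigr),$$
where $\bar M := \tfrac{1}{K-1}\sum_{k=1}^{K-1}\bigl(1+hQ_s(-\rho+kh\ve)\bigr)$ is the arithmetic mean of the logarithmed factors.

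The essential new step is the control of $\bar M$. By Remark \ref{rem:Qs}, $k\mapsto 1+hQ_s(-\rho+kh\ve)$ is a polynomial of degree $s$ in $k$, so Faulhaber-type estimates give a bound of the form $\sum_{k=1}^{K-1}\bigl(1+hQ_s(-\rho+kh\ve)\bigr) \leq \bar C(K-1)^{s+1}$ for a constant $\bar C$ depending on the Butcher coefficients and on $(h,\ve,\rho)$ but not on $K$, with the factor $1/(s+1)$ from the leading Faulhaber sum absorbed into $\bar C$. Dividing through by $K-1$ and taking logarithms, then using $K>2$ so that $\log(K-1)>0$, yields
$$\log \bar M \;\leq\; \bar C(s+1)\log(K-1),$$
after absorbing the additive $\log\bar C$ into an enlarged multiplicative constant.

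Combining with the Jensen bound produces
$$\bar C(s+1)\,(K-1)\log(K-1) \;\geq\; -\log\bigl(1+hQ_s(-\rho)\bigr).$$
Setting $u := \log(K-1)$ so that $(K-1)\log(K-1) = u e^u$ recasts the inequality as $u e^u \geq -\log(1+hQ_s(-\rho))/(\bar C(s+1))$. Since the Lambert $W$ function inverts $w\mapsto w e^w$ and is increasing on $[0,\infty)$, one obtains $u \geq W\bigl(-\log(1+hQ_s(-\rho))/(\bar C(s+1))\bigr)$ and hence $K\geq 1+\exp(W(\cdot)) = K^*$, which is exactly \eqref{eq:RK-Kstar}. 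For the limit, approaching a critical triplet $(\rho^*,h^*,\ve^*)$ from the contracting region gives $1+hQ_s(-\rho)\to 0^+$, so the argument of $W$ tends to $+\infty$; since $W(x)\to\infty$ as $x\to\infty$, we get $K^*\to\infty$, and consequently $x^* = -\rho+K^*h\ve\to\infty$ (for fixed $h^*,\ve^*>0$).

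The main obstacle I anticipate is the precise derivation of the bound $\log\bar M \leq \bar C(s+1)\log(K-1)$ with the explicit factor $s+1$: this requires tracing the leading coefficient of $Q_s$ through the recursion \eqref{eq:par1} defining the RK stages, invoking Faulhaber's formula cleanly, and controlling the absorption of lower-order corrections so that the factor $s+1$ (rather than an unspecified polynomial degree) emerges explicitly in the constant. Everything else is a direct generalization of the Euler argument.
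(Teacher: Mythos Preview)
Your proposal is correct and follows essentially the same route as the paper: take logarithms, isolate the $k=0$ factor, apply the finite Jensen inequality, control the mean of the degree-$s$ polynomial via Faulhaber-type sums, and invert $(K-1)\log(K-1)$ with the Lambert $W$ function. The paper resolves precisely the obstacle you flag at the end: it writes the averaged sum as $\sum_{i=1}^{s} C_i (K-1)^i$, bounds it by $\max_i |C_i|\cdot\frac{(K-1)((K-1)^s-1)}{K-2}$ via a geometric series, then uses $K>2$ to obtain the explicit constant $\bar C=\tfrac{|\log(\max_i|C_i|)|}{\log 2}+1$ and the clean factor $(s+1)$ from $\log\bigl((K-1)^{s+1}\bigr)$; this is exactly your ``absorption of $\log\bar C$ into an enlarged multiplicative constant,'' made concrete.
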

\begin{proof}
Recalling Remark \ref{rem:Qs} and for simplicity of notation, let us write \eqref{eq:ineq1_rk} as
\begin{equation}\label{eq:RK-e1}
    \prod_{k=0}^{K-1}\sum_{i=0}^s\theta_i k^i\geq1,
\end{equation}
for some coefficients $\theta_i=\theta_i(h,\ve,R_s)$ where in $R_s$ we gather the coefficients of the RK scheme. It is worth noting that $\theta_0=1+hQ_s(-\rho)$. 
Taking $\log$ on both sides of \eqref{eq:RK-e1} and proceeding similar to the proof of Proposition \ref{prop:trans-euler} we get
\begin{equation}
    \sum_{k=1}^{K-1}\log\left(\sum_{i=0}^{s} \theta_i k^i \right) \geq -\log\theta_0.
\end{equation}

Next, again using the finite form of Jensen's inequality allows us to write
\begin{equation}\label{eq:RK-e2}
   (K-1) \log\left( \frac{1}{K-1}\sum_{k=1}^{K-1}\sum_{i=1}^s \theta_i k^i \right)\geq-\log\theta_0.
\end{equation}

Noting that the sums on the left side of equation~\eqref{eq:RK-e2} do commute, we then have
\begin{equation}\label{eq:RK-e3}
    \begin{split}
        \sum_{k=1}^{K-1}\sum_{i=1}^s \theta_i k^i &= \sum_{i=1}^s\sum_{k=1}^{K-1} \theta_i k^i=\sum_{i=1}^s\theta_i\sum_{k=1}^{K-1}  k^i\\
        &=\sum_{i=1}^s\theta_i\left( \frac{1}{i+1}\sum_{j=0}^i \binom{i+1}{j}B_j (K-1)^{i+1-j}\right),
    \end{split}
\end{equation}
where the last equality is obtained by using Faulhaber's formula and the $B_j$'s are Bernoulli numbers of the second kind \cite[pp. 106-109]{conway2012book}. ubstituting \eqref{eq:RK-e3} in \eqref{eq:RK-e2} leads to
\begin{equation}\label{eq:RK-e4}
\begin{split}
    (K-1)\log\left( \sum_{i=1}^s\theta_i\left( \frac{1}{i+1}\sum_{j=0}^i \binom{i+1}{j}B_j (K-1)^{i-j}\right) \right) & \geq - \log\theta_0,
\end{split}
\end{equation}
where we note that we have cancelled out the term $\frac{1}{K-1}$ of the left hand side of  \eqref{eq:RK-e2} with the appropriate one in \eqref{eq:RK-e3}. For  convenience let us write 
\begin{equation}
    \sum_{i=1}^s\theta_i\left( \frac{1}{i+1}\sum_{j=0}^i \binom{i+1}{j}B_j (K-1)^{i-j}\right)=\sum_{i=1}^s C_i(K-1)^i,
\end{equation}
where $C_i$, $i=1,\ldots,n$, are coefficients depending on $(h,\ve,R_s)$.
Next, note that for fixed $s$ we have 
\begin{equation}
    \sum_{i=1}^s C_i(K-1)^i=\frac{\max\left\{|C_i|\right\}}{\max\left\{|C_i|\right\}}\sum_{i=1}^s C_i(K-1)^i=\max\left\{|C_i|\right\}\sum_{i=1}^s D_i(K-1)^i,
\end{equation}
where $D_i\coloneqq\frac{C_i}{\max\left\{ |C_i| \right\}}\in[-1,1]$. Hence, one has $\max\left\{|C_i|\right\}\sum_{i=1}^s D_i(K-1)^i\leq \max\left\{|C_i|\right\}\sum_{i=1}^s(K-1)^i$ and therefore, simplifying the geometric series $\sum_{i=1}^s(K-1)^i$ one gets
\begin{align} \label{ineq:cKKstar}
\sum_{i=1}^s C_i(K-1)^i&\leq \max\left\{|C_i|\right\}\frac{(K-1)((K-1)^s-1)}{K-2}.
\end{align}

One should keep in mind that the above geometric series is divergent, so one should fix a finite $s$ for the above formula to make sense. In practical terms this is not an issue since we recall that $s$ is the stage of the RK-method, and this is usually a small positive integer. 

Combining \eqref{eq:RK-e4} and \eqref{ineq:cKKstar}, we get
\begin{equation}\label{eq:RK-e5}
    (K-1)\log\left( \max\left\{|C_i|\right\}\underbrace{\frac{(K-1)((K-1)^s-1)}{K-2}}_{=:F(K)}\right)\geq-\log\theta_0.
\end{equation}
Note that $F(K)>2$ due to the assumption $K>2$. Therefore:
\begin{equation}\label{eq:RK-e6}
    \begin{split}
        \log( \max\left\{|C_i|\right\} F(K) ) &=\left(\frac{\log(\max\left\{|C_i|\right\})}{\log(F(K))}+1\right)\log(F(K))\\
        &\leq \underbrace{\left(\frac{\left|\log(\max\left\{|C_i|\right\})\right| }{\log(2)}+1\right)}_{=:\bar C}\log(F(K)).
    \end{split}
\end{equation}

Furthermore, note that
\begin{equation}\label{eq:RK-e7}
    \begin{split}
        \log (F(K)) &= \log\left( \frac{(K-1)((K-1)^s-1)}{K-2} \right)\\
        &=\log\left( (K-1)((K-1)^s-1) \right) - \log(K-2)\\
        &\leq \log\left((K-1)^{s+1}\right) = (s+1)\log(K-1).
    \end{split}
\end{equation}  

So, combining \eqref{eq:RK-e6} and \eqref{eq:RK-e7} allows us to simplify \eqref{eq:RK-e5} to
\begin{equation}
    (K-1)\log(K-1)\geq-\frac{\log\theta_0}{\bar C (s+1)}.
\end{equation}
Next, using the Lambert W function as we did for the proof of Proposition \ref{prop:trans-euler}, we get the estimate
\begin{equation}\label{eq:RK-e8}
    K\geq K^* = 1+\exp\left( W\left( -\frac{\log\theta_0}{\bar C (s+1)} \right) \right).
\end{equation}
Thus \eqref{eq:RK-Kstar} follows from substituting the value of $\theta_0=1+hQ_s(-\rho)$ in \eqref{eq:RK-e8}. Finally, the argument for the limit of $x^*$ follows from $\lim_{x\to\infty}W(x)=\infty$.
\end{proof}

\begin{remark}
Although the proofs of Propositions \ref{prop:trans-euler} and \ref{prop:RK-trans} are similar, some of the intermediate simplifications are different. Hence, we do not recover the estimate \eqref{eq:KstarEuler} from \eqref{eq:RK-Kstar}.
\end{remark}
\blue{\begin{remark} \label{rem:RK}
The lack of symmetry in the linearization of explicit Runge-Kutta maps causes the full stabilization, as explained in Remark~\ref{rem:euler} for the Euler method.
\end{remark}}

\subsection{Pitchfork singularity} \label{sec:pitchfork}
The canonical form of the pitchfork singularity in a fast-slow system reads
\begin{align*}
\begin{array}{r@{\;\,=\;\,}l}
x' &= x(y - x^2) \blue{+ \lambda \epsilon + h_1(x,y, \epsilon)}\,,  \\
y' &= \epsilon \blue{(1 + h_2(x,y, \epsilon))}\,,
\end{array}
\end{align*}
where, again, $ 0 < \epsilon \ll 1$ is the time scale separation parameter \blue{$\lambda \approx 0$ is an unfolding parameter for canards and
\begin{align*}
    h_1(x,y,\epsilon) &= \mathcal O \left(x^2y, xy^2, y^3, \epsilon x, \epsilon y, \epsilon^2 \right) \,,\\
     h_2(x,y,\epsilon) &= \mathcal O \left(x, y, \epsilon \right).
\end{align*}}
\blue{With the same arguments as for the transcritical case, we reduce the system to the model problem
\begin{align} \label{ODE_pitchfork_overview}
\begin{array}{r@{\;\,=\;\,}l}
x' &= x(y - x^2)  \\
y' &= \epsilon\,.
\end{array}
\end{align}}
The analysis of the expansion-contraction rate for the fast-slow pitchfork singularity under forward Euler discretization is similar to the one performed for the transcritical singularity. Therefore, we just sketch the main arguments, see \cite{ArciEngelKuehn19} for the details.

The map obtained after forward Euler discretization of the fast-slow pitchfork singularity is given by
\begin{equation} \label{Euler_pitchfork}
    P(x,y)=(x+h(x(y-x^2)),y+h\ve).
\end{equation}
In this case, the maximal canard is
\begin{equation}
    \gamma_{y(0)}(n)=(0,y(0)+nh\ve),\qquad n\in\N.
\end{equation}
Letting $y(0)=-\rho$, where $\rho\in\cO(1)$ is a positive constant we find that the expansion-contraction rate along $\gamma_{-\rho}(n)$ is given by (compare with \eqref{eq:trans-rate})
\begin{equation}
    v_1(n)=\prod_{k=0}^{n-1}\left(1+h(-\rho+kh\ve)\right).
\end{equation}
 Then, analogously to Proposition \ref{prop:trans-euler}, for the pitchfork case we have the following.
\begin{proposition}\label{prop:pitch-euler} Consider the inequality
	\begin{equation}\label{eq:ineq2_pitch}
		\prod_{k=0}^{K-1}\left(1+h(-\rho+kh\ve)\right)\geq1,	
	\end{equation}
	where $K\in\N$ and $K>1$. If $1-h\rho>0$ and inequality~\eqref{eq:ineq2_pitch} holds, then
	\begin{equation}
		K\geq K^*:=\frac{1}{h^2\ve}\left(-1+h\rho+\exp\left(W\left(-h^2\ve\ln(1-h\rho )\right)\right)\right), 
	\end{equation}
	where $W$ denotes the Lambert W function. Furthermore, let $y^*=-\rho+K^*h\ve$, then
 \begin{equation}
	\lim_{\rho\to\frac{1}{h}} y^* = \infty.
\end{equation}
\end{proposition}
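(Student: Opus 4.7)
The plan is to follow essentially verbatim the argument of Proposition~\ref{prop:trans-euler}, since the pitchfork inequality~\eqref{eq:ineq2_pitch} has the same algebraic form as the transcritical inequality~\eqref{eq:ineq1} up to a relabeling of constants. First I would introduce the shorthand $a := 1 - h\rho$ and $b := h^2\ve$, so that \eqref{eq:ineq2_pitch} rewrites as $\prod_{k=0}^{K-1}(a + bk) \geq 1$ with $a \in (0,1)$ and $b > 0$. The only differences compared to the transcritical case are the factor $h$ instead of $2h$ inside $a$ and the factor $h^2\ve$ instead of $2h^2\ve$ inside $b$.

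The second step is to take natural logarithms on both sides, isolate the $k=0$ term to obtain $\sum_{k=1}^{K-1}\ln(a+bk) \geq -\ln(a)$, and then apply the finite form of Jensen's inequality to the concave function $\ln$. Combined with the arithmetic-series identity $\sum_{k=1}^{K-1}(a+bk) = (K-1)(a+bK/2)$, this produces
$$
K\ln\!\left(a+\tfrac{b}{2}K\right) \,>\, (K-1)\ln\!\left(a+\tfrac{b}{2}K\right) \,\geq\, -\ln(a),
$$
which is valid in the relevant regime where $a + bK/2 > 1$.

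Next, introducing $z$ by $\exp(z) = a + bK/2$ (so that $K = (2/b)(\exp(z)-a)$) and exploiting $a\in(0,1)$, which guarantees $-\ln(a) > 0$, one obtains the scalar inequality $\exp(z)\,z > -(b/2)\ln(a)$. Inverting through the Lambert $W$ function yields the lower bound $z^* = W\!\left(-(b/2)\ln(a)\right)$, and hence $K \geq (2/b)\left(\exp(z^*) - a\right)$. Resubstituting $a = 1 - h\rho$ and $b = h^2\ve$ then delivers the announced formula for $K^*$. The divergence $\lim_{\rho \to 1/h} y^* = \infty$ follows at once: as $\rho \nearrow 1/h$ one has $a \to 0^+$, whence $-\ln(a) \to +\infty$; since $\lim_{x\to\infty} W(x) = \infty$, we get $K^* \to \infty$, and consequently $y^* = -\rho + K^* h\ve \to \infty$ for fixed $h,\ve$.

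I expect no genuinely new obstacle beyond those already encountered in the transcritical proof; the only delicate point is the sign and positivity bookkeeping that makes the passages from the Jensen step to the Lambert $W$ step clean (in particular $-\ln(a) > 0$ from the assumption $1-h\rho \in (0,1)$, and $\ln(a+bK/2)>0$ in the regime of interest). Since the structure of the inequality is identical up to the constants $a$ and $b$, the machinery transfers verbatim and no additional estimate beyond those in Proposition~\ref{prop:trans-euler} is required.
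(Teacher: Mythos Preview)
Your proposal is correct and follows exactly the paper's approach: the paper's own proof simply states that it ``follows the exact same reasoning as the proof of Proposition~\ref{prop:trans-euler},'' and you have carried out precisely that argument with the substitutions $a = 1 - h\rho$, $b = h^2\ve$. One small caveat: resubstituting these values into $K^* = \tfrac{2}{b}\bigl(\exp(W(-\tfrac{b}{2}\ln a)) - a\bigr)$ actually yields $\tfrac{2}{h^2\ve}\bigl(\exp(W(-\tfrac{h^2\ve}{2}\ln(1-h\rho))) - 1 + h\rho\bigr)$, which differs from the formula printed in the statement by stray factors of~$2$ (evidently a typo in the paper, since the transcritical $b=2h^2\ve$ makes those factors cancel); this is harmless for the limit conclusion.
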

\begin{proof}
The proof follows the exact same reasoning as the proof of Proposition \ref{prop:trans-euler}, so we do not repeat it here.
\end{proof}

\begin{remark}
Again, just as we elaborated for the transcritical singularity, the arbitrarily delayed loss of stability extends to general explicit Runge-Kutta methods such that analogous results to Propositions \ref{prop:RK-canard} and \ref{prop:RK-trans} hold. We omit the details here as the reasoning is very similar to Section \ref{sec:trans-RK}. \blue{In addition note that the condition $\rho h < 1$ is again in exact accordance with the stability criterion for the Euler method with respect to the Dahlquist test equation. The associated considerations are analogous to Remark~\ref{rem:euler}.}
\end{remark}

We conclude these subsections by noting that Theorem \ref{thm:RungeKutta} immediately follows from Propositions \ref{prop:RK-canard}, \ref{prop:RK-trans} and its analogues for the pitchfork case.

\subsection{Fold singularity} \label{sec:fold}
We consider the canonical form of a fast-slow system with fold singularity, \blue{admitting a canard connection,
\begin{align*}
\begin{array}{r@{\;\,=\;\,}l}
x' & - y h_1(x,y, \epsilon) +  x^2 h_2(x,y, \epsilon)  + \epsilon h_3(x,y, \epsilon), \\
y' & \epsilon(x h_4(x,y, \epsilon) - \lambda h_5(x,y, \epsilon) + y h_6(x,y, \epsilon)),
\end{array}
\end{align*}}
where  $0 < \epsilon \ll 1 $ again quantifies the time scale separation \blue{$\lambda \approx 0$ is an unfolding parameter for canards and
\begin{align} \label{higherorder}
\begin{array}{r@{\;\,=\;\,}l}
h_i(x,y, \epsilon) & 1 + \mathcal{O}(x,y, \epsilon)\,, \quad i=1,2,4,5\\
h_i(x,y, \epsilon) & \mathcal{O}(x,y, \epsilon)\,, \quad i=3,6.
\end{array}
\end{align}}
\blue{With the same arguments as before, we reduce the system to the model problem
\begin{align}  \label{ODE_fold}
\begin{array}{r@{\;\,=\;\,}l}
x' & - y +  x^2 \,, \\
y' & \epsilon x\,.
\end{array}
\end{align}}
We apply the explicit forward-Euler discretization with step size $h > 0$ to system~\eqref{ODE_fold} and obtain a 
map 
given by
\begin{equation} \label{map_Euler}
\begin{pmatrix}[1.7]
x \\ y 
\end{pmatrix}
\mapsto \begin{pmatrix}[1.7]
\tilde{x}  \\ \tilde y 
\end{pmatrix}
= \begin{pmatrix}[1.7]
x + h(x^2  - y ) \\
y + \epsilon h x 
\end{pmatrix}.
\end{equation}
Note that by the change of variables $ \epsilon h \to h$ we can write the system in the slow time scale as
\begin{equation} \label{slowEuler}
\epsilon \frac{\tilde x - x}{h} =  x^2 - y\,, \quad \frac{\tilde y - y}{h} =  x\,.
\end{equation}

In analogy to the time-continuous case, the critical manifold $\cS$ is given as
$$\cS = \left\{(x,y)\in\mathbb{R}^2: y =  x^2\right\},$$ 
splitting into two normally hyperbolic branches, the attracting subset $\cS_\txta = \{(x,y)\in \cS: x<0\} $ and the repelling subset $\cS_\txtr = \{(x,y)\in \cS: x>0\}$. 
It follows from~\cite[Theorem 4.1]{HPS77} that for $\epsilon, 
h > 0$ small enough there are corresponding forward invariant slow manifolds 
$ \cS_{\txta, \epsilon,h}$ and $\cS_{\txtr, \epsilon,h}$. The origin, i.e.~the canard point in the ODE case, is again a non-hyperbolic singularity.

However, we make the following observation (which is a simplified version of \cite[Proposition 3.1]{Engeletal2019}):
\begin{proposition} \label{prop:euler_fold}
The equation of the slow subsystem corresponding with~\eqref{slowEuler} reads
$$ \tilde x^2 =  x^2+ x h  \,,$$
which has the two solutions  $$ \tilde x =  \pm \sqrt{x^2 + x h},$$
on the set
$$\left\{(x,y)\in (\mathbb{R}\setminus \left( -h, 0\right) ) \times \mathbb{R} :  y = x^2\right\} \subset \cS .$$ 
Each solution has a fixed point at $x=0$ as opposed to the continuous-time case where the slow flow follows $\dot x = \frac{1}{2}$.
\end{proposition}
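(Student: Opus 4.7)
The plan is to pass from the full map~\eqref{slowEuler} to its slow subsystem by the standard singular-perturbation procedure of sending $\epsilon \to 0$, and then to impose the resulting algebraic constraint on both the pre- and post-iterate. First, setting $\epsilon = 0$ in the first equation of~\eqref{slowEuler} immediately yields $x^2 - y = 0$, i.e.~the critical manifold $\cS = \{y = x^2\}$, while the second equation becomes simply $\tilde y = y + h x$.

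Second, since the slow dynamics is forced to live on $\cS$, both $(x,y)$ and $(\tilde x, \tilde y)$ must satisfy $y = x^2$ and $\tilde y = \tilde x^2$. Substituting these into $\tilde y = y + h x$ gives the stated relation
\begin{equation}
\tilde x^2 = x^2 + h x,
\end{equation}
whence $\tilde x = \pm \sqrt{x^2 + h x}$. For this to define a real map we need $x(x+h) \geq 0$, which for $h > 0$ is equivalent to $x \in (-\infty,-h] \cup [0,\infty)$; removing the excluded gap $(-h,0)$ reproduces the domain stated in the proposition. Both signs are admissible because the reduced equation is purely algebraic and carries no orientation information; the two signs correspond to the attracting and repelling branches $\cS_\txta$, $\cS_\txtr$.

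Third, the fixed-point assertion follows by imposing $\tilde x = x$: this forces $x^2 = x^2 + h x$, i.e.~$h x = 0$, which since $h > 0$ gives the unique fixed point $x = 0$ on either branch. To highlight the contrast with the continuous-time setting, one differentiates the constraint $y = x^2$ along the continuous reduced flow $\dot y = x$, obtaining $2 x \dot x = x$, so that $\dot x = \tfrac{1}{2}$ away from the fold and the slow flow has no equilibrium; in the Euler-discretized reduced map, by contrast, the origin becomes a spurious equilibrium.

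The argument is essentially a direct substitution, so the only real subtlety is bookkeeping: one must verify both that the constraint $y = x^2$ is preserved by the update (which is automatic once we evaluate $\tilde y$ on the image) and that the domain restriction $x \notin (-h,0)$ is exactly what is needed for the square root to be real. Neither of these requires more than algebra, so I do not anticipate a genuine obstacle.
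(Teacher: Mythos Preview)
Your proof is correct and follows exactly the approach the paper takes: the paper's own proof consists of the single sentence ``Setting $\epsilon=0$ in~\eqref{slowEuler}, the statement follows from a straight forward calculation,'' and what you have written is precisely that calculation spelled out in full, including the domain restriction and the fixed-point check.
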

\begin{proof}
Setting $\epsilon=0$ in~\eqref{slowEuler}, the statement follows from a straight forward calculation.
\end{proof}

The previous lemma shows that there is no connection between $\cS_{\txtr}$ and $\cS_{\txta}$, which means that there is no singular canard solution on $\cS$ through the origin. Therefore, using the heuristic argument of the continuous-time case \cite[section 3]{krupa2001extending}, one cannot expect the occurrence of canards for the forward-Euler scheme. Compare also with Section \ref{sec:K-fold} and \cite[Proposition 3.2]{Engeletal2019}.


Additionally, when $\epsilon > 0$, we observe that (cf.~\cite[Remark 2.2]{Engeletal2019}) the ODE system~\eqref{ODE_fold} possesses the conserved quantity
\begin{equation} \label{firstintegral_epsilon}
H(x, y) = \frac{1}{2} e^{-2 y/\epsilon} \left( y - x^2 + \frac{\epsilon}{2} \right)\,,
\end{equation}
vanishing on the invariant set
\begin{equation} \label{invariant_epsi}
\cS_{\epsilon} : = \left\{ (x,y) \in \mathbb{R}^2 \, : \, y = x^2 - \frac{\epsilon}{2} \right\},
\end{equation}
which consists precisely of the attracting branch $\cS_{\txta, \epsilon} = \left\{ (x,y) \in S_{\epsilon} \, : \, x < 0 \right\}$ and the repelling branch $\cS_{\txtr, \epsilon} = \left\{ (x,y) \in S_{\epsilon} \, : \, x > 0 \right\}$, such that trajectories on $\cS_{\epsilon}$ go through the origin with speed $\dot x = \epsilon/2$.
Similarly to Proposition~\ref{prop:euler_fold}, it follows from an easy calculation that there is no function $c(\epsilon,h)$ such that $\{ y= x^2 - \frac{\epsilon}{2} + c(\epsilon, h)\}$ is invariant for the dynamics induced by~\eqref{map_Euler}.

For general $s$-stage RK schemes one argues similarly. In such a case, following \cite[Ch. VI]{wanner1996solving}, the corresponding explicit $s$-stage RK discretization of \eqref{ODE_fold} induces a map $(x,y)\mapsto(\tilde x,\tilde y)$ given by
\begin{equation}\label{eq:RK-fold}
	\begin{split}
		\ve \tilde x & = \ve x + h \sum_{i=1}^s\alpha_i (X_i^2 - Y_i),\\
		\tilde y &= y + h\sum_{i=1}^s\alpha_i X_i,\\
		\ve X_i &= \ve x + h\sum_{j=1}^{i-1}a_{ij}(X_j^2-Y_j),\\
		Y_i &= y + h\sum_{j=1}^{i-1}a_{ij}X_j,
	\end{split}
\end{equation}
\blue{where the coefficients $\alpha_i$ and $a_{ij}$ are given by a particular RK-method. As we did before, we let $A_i=\sum_{j=1}^{i-1}a_{ij}$ and note that $A_1=0$.}

\begin{lemma} Consider \eqref{eq:RK-fold}. 
	In the limit $\ve\to0$, one has $X_i^2-Y_i=0$ for all $i=1,\ldots,s$.
\end{lemma}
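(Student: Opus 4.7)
My plan is to set $\ve = 0$ directly in the stage equations of \eqref{eq:RK-fold} and read off an inductive cascade of algebraic constraints on the quantities $u_i := X_i^2 - Y_i$. More precisely, for each $i = 1,\ldots,s$ the stage equation
\[
\ve X_i = \ve x + h \sum_{j=1}^{i-1} a_{ij}(X_j^2 - Y_j)
\]
becomes, in the formal $\ve \to 0$ limit,
\[
\sum_{j=1}^{i-1} a_{ij}\, u_j = 0,
\]
and the update equation $\ve\tilde x = \ve x + h\sum_{i} \alpha_i(X_i^2 - Y_i)$ yields the extra relation $\sum_{i=1}^{s}\alpha_i u_i = 0$. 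This gives a triangular linear system in $(u_1,\ldots,u_s)$ which I will solve from top to bottom.

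The base case is $i=1$: since $A_1 = 0$, we have $X_1 = x$ and $Y_1 = y$, so $u_1 = x^2 - y$. The constraint obtained from the $i=2$ stage equation, $a_{21} u_1 = 0$, forces $u_1 = 0$ under the standard nondegeneracy $a_{21}\ne 0$. Geometrically this recovers the expected fact that the $\ve\to 0$ limit lives on the critical manifold $\cS = \{y = x^2\}$. For the inductive step, assume $u_1 = \cdots = u_{i-1} = 0$ with $2 \le i \le s-1$. The stage equation at index $i+1$ then collapses to $a_{i+1,i}\, u_i = 0$, giving $u_i = 0$ provided $a_{i+1,i}\ne 0$. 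Finally, for $i = s$ no further stage equation is available, but the update relation $\sum_{j}\alpha_j u_j = 0$, together with the previously derived $u_1 = \cdots = u_{s-1} = 0$, reduces to $\alpha_s u_s = 0$, yielding $u_s = 0$ whenever $\alpha_s \neq 0$.

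The main conceptual point (rather than a computational obstacle) is the correct interpretation of the singular limit: as $\ve \to 0$, each stage equation ceases to determine $X_i$ explicitly and instead imposes an algebraic constraint inherited from the differential-algebraic structure of the slow subsystem. Once this is recognised, the proof is purely an algebraic cascade. The required nondegeneracy conditions $a_{i,i-1}\neq 0$ for $i = 2,\ldots,s$ and $\alpha_s \neq 0$ are satisfied for any nontrivial explicit RK scheme and are implicit in the setup; they can either be stated as a standing assumption on the Butcher tableau or invoked as a genericity hypothesis. If desired, the same conclusion can also be packaged as the statement that the lower-triangular matrix with rows $(a_{ij})_{j<i}$ together with the row $(\alpha_j)$ has trivial kernel, in which case $u = 0$ is the unique solution of the resulting homogeneous system.
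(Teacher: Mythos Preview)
Your argument is correct and follows the same route as the paper: set $\ve=0$ in the stage equations and in the update equation for $\tilde x$, obtaining the triangular homogeneous system $\sum_{j<i}a_{ij}u_j=0$ ($i=2,\ldots,s$) together with $\sum_i\alpha_i u_i=0$, and solve it top-down. The paper's proof records exactly these relations and then simply states that they ``lead to the result'' without spelling out the induction; you make the cascade explicit. Your added remark that the conclusion uses the nondegeneracy $a_{i,i-1}\neq 0$ for $i=2,\ldots,s$ and $\alpha_s\neq 0$ (equivalently, nonsingularity of the lower-triangular matrix you describe) is a point the paper leaves implicit, so in this respect your write-up is actually more careful than the original.
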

\begin{proof}
	First we note that from the third and fourth equations of \eqref{eq:RK-fold} one has $(X_1,Y_1)=(x,y)$. Next, taking the limit $\ve\to0$ in \eqref{eq:RK-fold} leads to
	\begin{equation}
		\begin{split}
		0 & = \sum_{i=1}^s\alpha_i (X_i^2 - Y_i),\\
		\tilde y &= y + h\sum_{i=1}^s\alpha_i X_i,\\
		0 &= \sum_{j=1}^{i-1}a_{ij}(X_j^2-Y_j), \qquad \forall i\in[1,s],\\
		Y_i &= y + h\sum_{j=1}^{i-1}a_{ij}X_j, \qquad \forall i\in[1,s].
		\end{split}
	\end{equation}
 The equations $0 = \sum_{j=1}^{i-1}a_{ij}(X_j^2-Y_j), \; \forall i\in[1,s]$ and $0 = \sum_{i=1}^s\alpha_i (X_i^2 - Y_i)$ lead to the result.
\end{proof}

It follows from the previous lemma that the critical manifold is given by 
\begin{equation}
	\cS = \left\{ X_i^2 = Y_i, \quad i=1,\ldots,s \right\},
\end{equation}
where the solutions can be found iteratively with $(X_1,Y_1)=(x,y)$, $X_i^2=Y_i$ and $Y_i = y + h\sum_{j=1}^{i-1}a_{ij}X_j $. Consequently, the reduced map on the critical manifold is
\begin{equation}\label{eq:RK-fold-red1}
	\tilde x^2 = x^2 + h\sum_{i=1}^s\alpha_i X_i,
\end{equation}
where
\begin{equation}\label{eq:RK-fold-red2}
	X_i = \pm \sqrt{Y_i}, \qquad Y_i = x^2 + h\sum_{j=1}^{i-1}a_{ij}X_j.
\end{equation}

\begin{lemma} Let $s>1$. A necessary condition for a solution of \eqref{eq:RK-fold-red1} to be well-defined is $x^2+ha_{21}x\geq0$.
\end{lemma}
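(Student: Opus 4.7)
The plan is to exploit the recursive structure of the RK-stage quantities $X_i, Y_i$ defined in~\eqref{eq:RK-fold-red2}, and observe that the very first step of the recursion (beyond the trivial initialization) already imposes a real-root condition that yields the claimed inequality.

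First I would fix the initialization: on the critical manifold we have $X_1 = x$ and $Y_1 = y = x^2$, which is consistent with $X_1^2 = Y_1$ and requires no sign condition on $x$. Since $s > 1$, the stage $i = 2$ is actually used in the reduced map, so I need to make sure that $X_2$ is real. From~\eqref{eq:RK-fold-red2} one computes
\begin{equation}
Y_2 = x^2 + h a_{21} X_1 = x^2 + h a_{21} x,
\end{equation}
and then $X_2 = \pm \sqrt{Y_2}$. For this square root to be well-defined as a real number, we must have $Y_2 \geq 0$, i.e.\ $x^2 + h a_{21} x \geq 0$, which is exactly the stated necessary condition.

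I would then note that this condition is indeed necessary for the reduced map~\eqref{eq:RK-fold-red1} to produce a real value of $\tilde x$: since $\alpha_2$ typically does not vanish for consistent schemes with $s > 1$ and the sum in~\eqref{eq:RK-fold-red1} involves $X_2$, failure of $Y_2 \geq 0$ would make the right-hand side complex. Even when $\alpha_2 = 0$, the subsequent stages $X_3, X_4, \ldots$ feed back through $Y_i = x^2 + h \sum_{j=1}^{i-1} a_{ij} X_j$, so $X_2$ still enters the definition of $\tilde x$, and its reality remains necessary.

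I do not expect any real obstacle here: the statement is essentially an immediate consequence of the recursion, and the proof is a one-line observation. The only subtlety worth flagging in the write-up is that the condition is necessary but not sufficient, since higher stages $X_i$ with $i \geq 3$ impose further inequalities of the form $Y_i \geq 0$ that depend on the particular Butcher tableau; these are naturally more complex and are precisely why the discrete-time fold problem is delicate under general explicit RK schemes.
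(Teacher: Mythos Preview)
Your proposal is correct and follows essentially the same approach as the paper: both compute $Y_2 = x^2 + h a_{21} x$ from the recursion~\eqref{eq:RK-fold-red2} and observe that the reality of $X_2 = \pm\sqrt{Y_2}$ forces $Y_2 \geq 0$. You add a brief discussion of why $X_2$ genuinely enters the right-hand side of~\eqref{eq:RK-fold-red1} even when $\alpha_2 = 0$, which the paper leaves implicit, but otherwise the arguments are the same.
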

\begin{proof}
	Let $s=2$, then \eqref{eq:RK-fold-red1} reads as $\tilde x^2 = x^2 + h\alpha_1 X_1 + h\alpha_2X_2=x^2 + h\alpha_1 x + h\alpha_2X_2$. Next we have that $X_2 = \pm\sqrt{Y_2}=\pm\sqrt{x^2+ha_{21}X_1}=\pm\sqrt{x^2+ha_{21}x}$, where we already see the required necessity. For $s>2$ the result follows from the fact that \eqref{eq:RK-fold-red1} reads as $\tilde x^2 = x^2 + h\alpha_1 X_1 + h\alpha_2X_2 + \sum_{i=3}^s\alpha_iX_i$ and the value of $X_2$ remains the same.
\end{proof}

The previous lemma implies that, just as in the forward-Euler case, there is a gap $x\in(-ha_{21},0)$ where the solutions of difference equation \eqref{eq:RK-fold-red1} are not well-defined. In other words, again there is no intersection of the critical manifolds $\cS_{\txtr}$ and $\cS_{\txta}$.

Clearly, due to the above exposition, we have to use a different, structure-preserving discretization if we want to understand a canard solution for the fold case in discrete-time. This is another motivation for considering the Kahan-Hirota-Kimura scheme, introduced in the next section.

\section{A-stable methods, Kahan-Hirota-Kimura scheme and symmetrtic loss of stability}\label{sec:Kahan}
\blue{In the following we will demonstrate that certain symmetric methods which preserve stability behaviour are the right choice, for preserving the linear (and also non-trivially) the close nonlinear stability behaviour. In more detail, for ODEs driven by a vector field $f$
we consider, for $a \in \mathbb{R}$, implicit Runge-Kutta methods of the form
        \begin{equation} \label{eq:genscheme_a}
            \frac{\tilde x - x}{h} = a f(x) + (1-2a) f\left( \frac{x + \tilde x}{2}\right) + a f(\tilde x). 
        \end{equation} 
        For example, for $a= \frac{1}{2}$, this is the trapezoid method and for $a=0$ this is the midpoint rule.
        As can be seen easily, these methods are all symmetric, i.e.~time-reversible, A-stable and second order (see \cite{Celledonietal2013}). Recall that A-stability cannot be satisfied by explicit Runge-Kutta methods \cite[Theorem 10.2.7]{ku2015}.}
        
\blue{In particular,} we will consider a method that has produced \emph{integrable} maps in several examples, i.e.~maps that conserve a certain quantity, \blue{and is very suitable for quadratic vector fields where it becomes explicit}: the Kahan-Hirota-Kimura discretization scheme (see e.g.~\cite{PetreraSuris18}) was introduced by Kahan in the unpublished lecture notes \cite{Kahan93} for ODEs with quadratic vector fields
\begin{equation} \label{genODE}
\dot z = f(z) = Q(z) + B z + c \,,
\end{equation}
where each component of $Q: \mathbb R^n \to \mathbb R^n$ is a quadratic form, $B \in \mathbb R^{n \times n}$ and $c \in \mathbb R^n$. The Kahan-Hirota-Kimura discretization, short \emph{Kahan method}, reads as
\begin{equation} \label{genKahan}
\frac{\tilde z - z}{h} = \bar Q(z, \tilde z) + \frac{1}{2} B( z + \tilde z) + c\,,
\end{equation}
where
$$ \bar Q(z, \tilde z)  = \frac{1}{2} ( Q(z +\tilde z) - Q(z) - Q(\tilde z))$$ 
is the symmetric bilinear form such that $ \bar Q(z,z) = Q(z)$. Note that equation~\eqref{genKahan} is linear with respect to $\tilde z$ and by that defines a \emph{rational} map $\tilde z = \Lambda_f (z, h)$, which approximates the time $h$ shift along the soultions of the ODE~\eqref{genODE}. Further note that $\Lambda_f^{-1}(z,h) = \Lambda_f (z, - h)$ and, hence, the map is \emph{birational}. 

The explicit form of the map $\Lambda_f$ defined by equation~\eqref{genKahan} is given as
\begin{equation} \label{Kahanexplicit}
\tilde z = \Lambda_f (z, h) = z + h\left(\Id - \frac{h}{2} \rmD f(z)\right)^{-1} f(z)\,.
\end{equation}
The Kahan method is the specific form, for quadratic vector fields, of an implicit Runge-Kutta scheme \blue{of the form~\eqref{eq:genscheme_a} with $a=-\frac{1}{2}$}, i.e.~given by (cf. \cite[Proposition 1]{Celledonietal2013})
\begin{equation} \label{RungeKutta}
\frac{\tilde z - z}{h} =  - \frac{1}{2} f(z) + 2f\left( \frac{z+\tilde z}{2} \right) - \frac{1}{2} f(\tilde z) \,.
\end{equation}
Note that the ODE~\eqref{ODE_pitchfork_overview}, i.e.~the pitchfork problem, has a cubic term \blue{such that the Kahan method can not be used in its explicit form and the canards are not given as explicit solutions. However, we will use the pitchfork case for demonstrating the dependence of methods of the form on the parameter $a$ and by that discuss the roles of symmetry and A-stability for our discretized canard problems.}
\subsection{Transcritical singularity} \label{sec:kahan_transcritical}
The Kahan discretization of equation~\eqref{ODE_transcrit} gives the map $P_{\textnormal{K}}:\R^2 \setminus \{x = \frac{1}{h}\} \to\R^2$, written as
\begin{equation} \label{Kahan_transcrit}
P_{\textnormal{K}}(x,y) = \begin{pmatrix}[1.7]
\tilde x  \\ \tilde y 
\end{pmatrix}
=
\begin{pmatrix}[1.7]
\dfrac{x + \epsilon h - h y(y + \epsilon h)}{1 - h x} \\
y + \epsilon h
\end{pmatrix},
\end{equation}
where $0<\ve\ll1$, and $h>0$.

Similarly to the continuous-time and the Runge-Kutta case, we find the diagonal to be an invariant curve for~\eqref{Kahan_transcrit} with special canard solution $\gamma$.
In more detail, we have the following statements:
\begin{proposition} \label{prop:Kahan_invariant_transcrit}
The diagonal
\begin{equation} \label{Kahan_invariant_transcrit}
\cD : = \left\{ (x,y) \in \mathbb{R}^2 \, : \, y = x \right\} 
\end{equation}
is invariant under iterations of $P_{\textnormal{K}}$~\eqref{Kahan_transcrit}. 
Solutions on $\cD$ are given by
\begin{equation}  \label{Kahan_gensol_transcrit}
\gamma_{x(0)}(n) = \left( x(0)+\epsilon h n , x(0)+\epsilon h n \right), \forall n \in \mathbb{Z}.
\end{equation}
In particular, for $(x,y)\in \cD$ we have: 
\begin{equation} \label{eq:magnitude_deriv}
\left|\frac{\partial\tilde x}{\partial x}\right|  \quad \left\{ \begin{array}{ll} < 1 & \mathrm{as \;long\; as\;} x < - \epsilon h/2, \\ =1 & \mathrm{ for \;}x=- \epsilon h/2,\\ >1 & \mathrm{as \;long\; as\;} x > - \epsilon h/2, \, x \neq 1/h. \end{array}\right.
\end{equation}
A special canard solution, symmetric with respect to the partition 
$$\cD = S_{\textnormal{a}} \cup \{(-\epsilon h/2, - \epsilon h/2)\} \cup  S_{\textnormal{r}},$$ where
\begin{equation} \label{Kahan_invariant_transcrit_a_r}
S_{\textnormal{a}} = \left\{ (x,y) \in \cD \, : \, x <- \epsilon h/2 \right\}, \ S_{\textnormal{r}} = \left\{ (x,y) \in \cD \, : \, x > - \epsilon h/2 \right\},
\end{equation}
is given for $x(0) = -\epsilon h/2$ and denoted by
\begin{equation} \label{Kahan_specsol_transcrit}
\gamma(n)  = \left(\epsilon h \frac{2n-1}{2}, \epsilon h \frac{2n-1}{2} \right), \forall n \in \mathbb{Z}.
\end{equation} 
\end{proposition}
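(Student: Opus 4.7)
The plan is to verify each of the four claims by direct computation, exploiting a single fortunate factorization of the Kahan-discretized numerator along the diagonal.

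First I would substitute $y=x$ into the formula for $\tilde x$ in~\eqref{Kahan_transcrit}. The numerator collapses to $x+\epsilon h - hx(x+\epsilon h) = (x+\epsilon h)(1-hx)$, which cancels the denominator $1-hx$ (valid since $x\neq 1/h$). This yields $\tilde x = x + \epsilon h = \tilde y$, proving invariance of $\cD$ and the closed-form orbit~\eqref{Kahan_gensol_transcrit} by induction on $n$; extension to $n\in\mathbb{Z}$ is automatic because the Kahan scheme is birational, as noted around~\eqref{Kahanexplicit}.

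For the derivative claim I would apply the quotient rule to the general formula for $\tilde x$ keeping $y$ free, obtaining
\[
\frac{\partial \tilde x}{\partial x} = \frac{1 + \epsilon h^2 - h^2 y^2 - \epsilon h^3 y}{(1-hx)^2},
\]
and then restrict to $y=x$. The key step is the factorization
\[
1 + \epsilon h^2 - h^2 x^2 - \epsilon h^3 x = (1-hx)\bigl(1 + hx + \epsilon h^2\bigr),
\]
which cancels one factor of $1-hx$ and reduces the restricted derivative to $(1+hx+\epsilon h^2)/(1-hx)$. A short case analysis then yields~\eqref{eq:magnitude_deriv}: assuming first $x<1/h$, the inequality $(1+hx+\epsilon h^2)/(1-hx)<1$ is equivalent to $x<-\epsilon h/2$, while the opposing bound $-(1-hx)<1+hx+\epsilon h^2$ reduces to $2+\epsilon h^2>0$ and is therefore automatic. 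Equality at $x=-\epsilon h/2$ and the expansion regime for $-\epsilon h/2<x<1/h$ follow in the same way; for $x>1/h$ both factors change sign and the ratio has magnitude $(1+hx+\epsilon h^2)/(hx-1)>1$, again from $2+\epsilon h^2>0$.

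Finally, the symmetric canard of part (4) is characterized as the unique diagonal orbit whose $n=0$ point is the transition $x=-\epsilon h/2$: plugging $x(0)=-\epsilon h/2$ into~\eqref{Kahan_gensol_transcrit} gives exactly~\eqref{Kahan_specsol_transcrit}, and the partition $\cD = S_{\textnormal{a}}\cup\{(-\epsilon h/2,-\epsilon h/2)\}\cup S_{\textnormal{r}}$ is forced by the sign of $|\partial \tilde x/\partial x|-1$ established in the previous step. No real obstacle is anticipated: the entire argument is algebraic and the only \emph{trick} is spotting the factorization above, which is precisely the manifestation of the Kahan scheme preserving the $(1-hx)$-structure coming from the bilinear form $\bar Q$.
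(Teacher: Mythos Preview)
Your proposal is correct. The invariance argument and the canard formula are handled exactly as in the paper, via the factorization $(x+\epsilon h)(1-hx)$ of the numerator.

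The derivative claim is where your route diverges from the paper's. The paper keeps the unfactored expression
\[
J_h(x)=\dfrac{1 - h^2 x^2 - \epsilon h^3 x + \epsilon h^2}{(1-hx)^2},
\]
checks $J_h(-\epsilon h/2)=1$, and then computes $J_h'(x)=h(2+\epsilon h^2)/(1-hx)^2>0$ to conclude monotonicity. You instead spot the factorization $1+\epsilon h^2 - h^2 x^2 - \epsilon h^3 x = (1-hx)(1+hx+\epsilon h^2)$, reduce $J_h$ to $(1+hx+\epsilon h^2)/(1-hx)$, and settle the inequality by direct comparison. Both are short; your version has the mild advantage that the absolute value in~\eqref{eq:magnitude_deriv} is handled explicitly (you verify $J_h>-1$ on $x<1/h$ and $|J_h|>1$ on $x>1/h$ via $2+\epsilon h^2>0$), whereas the paper's monotonicity argument, strictly read, only gives $J_h<1$ versus $J_h>1$ on the connected piece $x<1/h$ and leaves the lower bound and the $x>1/h$ branch implicit. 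The paper's derivative formula, on the other hand, is reused later in Proposition~\ref{prop:main_kahan_transcr}, so their choice has downstream value.
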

\begin{proof}
The invariance of $\cD$ follows from an easy calculation.
Furthermore, observe that if $(x,y) \in \cD$, we have
$$ \tilde x = \frac{x -h x^2 + \epsilon h -\epsilon h^2 x}{1 - h x} = \frac{x\left(1 - h x \right) + \epsilon h \left( 1 - hx \right)}{1 - h x} =  x + \epsilon h\,.$$
We compute the Jacobian matrix associated with~\eqref{Kahan_transcrit} as
\begin{equation} \label{KahanJacobian_transcrit}
\frac{\partial (\tilde x, \tilde y)}{\partial (x,y)}   = \begin{pmatrix}[1.7]
\dfrac{1 - h^2 y(y + \epsilon h) + \epsilon h^2}{ (1- h x)^2} & \dfrac{ -2 h x - \epsilon h^2}{ 1- h x} \\
0 & 1
\end{pmatrix}.
\end{equation}
In particular, observe that
\begin{align} \label{Kahan_transcrit_Jacobian}
   \frac{\partial \tilde x}{\partial x} (x,y) \big|_{(x,y) \in \cD} &=  \dfrac{1 - h^2 y(y + \epsilon h) + \epsilon h^2}{ (1- h x)^2} \big|_{(x,y) \in \cD} \nonumber\\
   &= \dfrac{1 - h^2 x^2 - x \epsilon h^3 + \epsilon h^2}{ (1- h x)^2}=: \dfrac{f_h(x)}{g_h(x)} =: J_h(x). 
\end{align}
It is easy to calculate that $J_h(- \epsilon h/2) = 1$. 
Furthermore, observe that $J_h$ is strictly increasing for all $x \neq 1/h$, since we obtain with equation~\eqref{Kahan_transcrit_Jacobian} that
\begin{equation}\label{eq:transcrit-deriv}
    J_h(x)' = \dfrac{f_h'(x)g_h(x) - g_h'(x)f_h(x)}{g_h(x)^2} = \dfrac{h(2+ \epsilon h^2)}{(1-hx)^2} > 0\,.
\end{equation}
%
This shows the claim~\eqref{eq:magnitude_deriv}.

The existence of the special canard $\gamma$~\eqref{Kahan_specsol_transcrit} then follows directly.
\end{proof}
We denote the second entry of the matrix~\eqref{KahanJacobian_transcrit} by
$$ \tilde J_h(x) := \dfrac{ -2 h x - \epsilon h^2}{ 1- h x}\,.$$
Similarly to the Runge-Kutta case discussed in Section~\ref{sec:rk_transcrit}, let $\rho\in O(1)$ be a positive constant and set $x(0)=-\rho$. The variational equation of $P_{\textnormal{K}}^n$ along $\gamma_{- \rho}$ reads
\begin{equation}\label{eq:variational_transcrit_kahan}
	v(n+1)= \begin{pmatrix}[1.5]
		J_h (-\rho + \epsilon h n ) & \tilde J_h (-\rho + \epsilon h n  )\\ 0 & 1
		\end{pmatrix}v(n),
\end{equation}
where $v(n)=(v_1(n),\,v_2(n))\in\R^2$ and $n\in\N$. It follows from an easy calculation that the matrix~\eqref{KahanJacobian_transcrit} has a simple eigenvalue $1$ for the eigenvector $(1,1)^\top$ which is a fixed point of equation~\eqref{eq:variational_transcrit_kahan} and characterises the normal direction along the canard. The local contraction/expansion rate is characterised by the solutions of \eqref{eq:variational_transcrit_kahan} in the transversal (hyperbolic) direction, corresponding to the eigenvector $(1,0)^\top$. Thus, the solution of \eqref{eq:variational_transcrit_kahan} with initial condition $(v_1(0),\,v_2(0))=(1,0)$ is given by $(v_1(n),0)$, where
$$v_1(n) = \prod_{k=0}^n J_h(-\rho + \epsilon h k ) \quad \forall n \geq 0.$$
When $\rho = \epsilon h N + \epsilon h/2$ for some $N \in \mathbb{N}$, then $\gamma_{-\rho} = \gamma$, i.e.~we are on the special canard~\eqref{Kahan_specsol_transcrit}. In this case, we can use the symmetry around  $-\epsilon h/2$, define
\begin{equation} \label{linearisation_map_transcrit}
\begin{split}
v^*(m) &:= \prod_{k=0}^m J_h(\epsilon h \frac{2k-1}{2}) \quad \forall m\geq 0, \\
v^*(m) &:= \prod_{k=m}^0 J_h(\epsilon h \frac{2k-1}{2}) \quad \forall m \leq 0,
\end{split}
\end{equation}
and observe that for all $n \geq N$
\begin{equation} \label{eq:lin_partitioned}
v_1(n) = v^*( -N)v^*(n-N).
\end{equation}

This leads to the following statement concerning contraction and expansion along the canard solutions:
\begin{proposition} \label{prop:main_kahan_transcr}
For any $h, \epsilon > 0$ such that $\frac{1}{\epsilon h^2} + \frac{1}{2} \notin \mathbb{N}$, consider the entry point $x(0)= - \rho < 0$. 
\begin{enumerate}
\item If  $\rho = \epsilon h N + \epsilon h/2$ for some $N \in \mathbb{N}$, then
the way-in/way-out map $\psi_h$ given by
\begin{align*}
 1= \prod_{k=0}^{N + \psi_h(-N)} \left|J_h(-\rho + \epsilon h k )\right| &= \left|v_1(N + \psi_h(-N)) \right|\\
 &= \left|v^*(-N)v^*(\psi_h(-N))\right|,
\end{align*}
is well defined and takes the value 
$$\psi_h(-N) = N.$$
In other words, the accumulated contraction and expansion rates compensate each other in perfect symmetry.
\item If, generally,  $\rho \in (\epsilon h N + \epsilon h/2, \epsilon h (N+1) + \epsilon h/2) $ for some $N \in \mathbb{N}$ such that $\left(\frac{1}{h}, \frac{1}{h} \right) \notin \gamma_{-\rho}$, then the way-in/way-out map $\psi_h(-N)$ is given by the smallest natural number such that 
$$ 1\leq  \prod_{k=0}^{N + \psi_h(-N)} \left|J_h(-\rho + \epsilon h k )\right| = \left|v_1(N + \psi_h(-N)) \right|,$$
and satisfies 
\begin{equation}
\psi_h(-N) \in \{ N+1, N+2\}.
\end{equation}

\end{enumerate}
Summarising both cases, we conclude that the expansion has compensated for contraction at 
$$x^* \in \{\rho - \epsilon h, \rho, \rho + \epsilon h\},$$
giving full symmetry of the entry-exit relation.
\end{proposition}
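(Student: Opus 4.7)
My plan is to reduce the entire analysis to a closed-form product expression for $v_1(n)$ along $\gamma_{-\rho}$ and then settle both cases by comparing finite arithmetic progressions. The starting observation is that the numerator of $J_h$ vanishes at $x = 1/h$ and hence factors as $1 - h^2 x^2 - \epsilon h^3 x + \epsilon h^2 = (1 - hx)(1 + hx + \epsilon h^2)$. Cancelling one factor of $1-hx$ against the denominator $(1-hx)^2$ in~\eqref{Kahan_transcrit_Jacobian} yields the simpler form
\begin{equation*}
J_h(x) \;=\; \frac{1 + hx + \epsilon h^2}{1 - hx}, \qquad x \neq 1/h,
\end{equation*}
from which the reflection symmetry $J_h(-\epsilon h/2 + s)\, J_h(-\epsilon h/2 - s) = 1$ around the canard point is immediate. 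Substituting $x_k = -\rho + \epsilon h k$ and writing $\mu := 1 - h\rho$, $\tilde{\mu} := 1 + h\rho + \epsilon h^2$, $\nu := \epsilon h^2$, a straightforward reindexing produces the product formula
\begin{equation*}
\prod_{k=0}^{m} J_h(x_k) \;=\; \prod_{j=1}^{m+1} \frac{\mu + \nu j}{\tilde{\mu} - \nu j},
\end{equation*}
which reduces the entry-exit problem to a comparison of two finite arithmetic progressions with common difference $\nu$.

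For case~1, when $\rho = \epsilon h(N + 1/2)$, one computes $\tilde{\mu} - \mu = \nu(2N + 2)$, so the involution $j \mapsto 2N + 2 - j$ on $\{1,\ldots,2N+1\}$ establishes a bijection between numerator and denominator factors; thus the product at $m = 2N$ equals~$1$, giving $\psi_h(-N) = N$ and $x^* = \rho - \epsilon h$. For case~2, I parameterise $\rho = \epsilon h(N + 1/2) + \delta$ with $\delta \in (0, \epsilon h)$. At $m = 2N$, the same symmetric pairing combined with a difference-of-squares identity shows that each paired ratio takes the form $\bigl((1 + \nu/2 - h\delta)^2 - (\nu l)^2\bigr)/\bigl((1 + \nu/2 + h\delta)^2 - (\nu l)^2\bigr) < 1$, so the product is strictly less than $1$ and $\psi_h(-N) > N$. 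After shifting the numerator index by $2$, the product at $m = 2N + 2$ rewrites as $\prod_{l=-N-2}^{N} (1 + \nu(l + 5/2) - h\delta)/(1 + \nu(l + 1/2) + h\delta)$, whose factors all exceed $1$ because $2\nu - 2h\delta = 2h(\epsilon h - \delta) > 0$; hence $\psi_h(-N) \leq N + 2$. An analogous shift by $1$ at $m = 2N + 1$ yields a product whose per-factor surplus is governed by $\nu - 2h\delta = h(\epsilon h - 2\delta)$, positive precisely when $\delta < \epsilon h/2$, so the threshold between $\psi_h(-N) = N+1$ and $\psi_h(-N) = N + 2$ lies exactly at $\delta = \epsilon h/2$.

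The technical heart of the argument is the clean factorisation of $J_h$ in the first step; once available, every subsequent step reduces to elementary sign checks for arithmetic progressions, together with translating the integer exit step $m^* = N + \psi_h(-N)$ back to $x^* = -\rho + \epsilon h\, m^*$ to recover the entry-exit relation. The main bookkeeping subtlety, which I expect to be the most delicate point, is verifying that each factor $\mu + \nu j$ and $\tilde{\mu} - \nu j$ appearing for $j \in \{1, \ldots, 2N + 2\}$ stays positive, so that inequalities for $|v_1(n)|$ translate into sign-preserving inequalities for $v_1(n)$ itself; this is precisely what the excluded condition $\tfrac{1}{\epsilon h^2} + \tfrac{1}{2} \in \mathbb{N}$, together with the assumption $(1/h, 1/h) \notin \gamma_{-\rho}$, is designed to rule out.
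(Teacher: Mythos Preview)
Your proof is correct and takes a genuinely different route from the paper's. The paper establishes the key identity $J_h\!\left(\epsilon h\frac{2n-1}{2}\right)J_h\!\left(\epsilon h\frac{-2n-1}{2}\right)=1$ by direct (``tedious but straight-forward'') computation, and then for case~2 appeals only to the strict monotonicity of $J_h$ to sandwich the product between $v^*(-N)v^*(N)$ and $v^*(-(N+1))v^*(N+1)$. Your approach instead begins with the algebraic observation that the numerator of $J_h$ factors as $(1-hx)(1+hx+\epsilon h^2)$, reducing $J_h$ to the M\"obius-type form $(1+hx+\epsilon h^2)/(1-hx)$; this makes the reflection symmetry about $-\epsilon h/2$ transparent and converts the entire product into a ratio of two arithmetic progressions with common difference $\nu=\epsilon h^2$. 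Case~1 then follows from a single index involution, and case~2 from termwise sign checks after suitable index shifts. Your argument is more explicit and yields extra information the paper does not state---namely the threshold $\delta=\epsilon h/2$ separating $\psi_h(-N)=N+1$ from $\psi_h(-N)=N+2$. The monotonicity route in the paper is shorter to write down but less transparent about why the symmetry holds. Both arguments share the same residual positivity issue you flag: the inequality manipulations (yours termwise, the paper's via monotonicity of $J_h$ rather than $|J_h|$) tacitly assume no factor changes sign over the relevant index range, which is exactly what the exclusions $\frac{1}{\epsilon h^2}+\frac{1}{2}\notin\mathbb N$ and $(1/h,1/h)\notin\gamma_{-\rho}$ are meant to guarantee.
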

\begin{proof}
It follows from a tedious but straight-forward calculation  that $$J_h\left(\epsilon h \frac{2n-1}{2}\right)J_h\left(\epsilon h \frac{-2n-1}{2}\right) = 1$$
for all $ \epsilon h \frac{2n-1}{2} \neq 1/h$, $n \in \mathbb{Z}$. Hence, the first claim follows immediately.

For the second claim: firstly, recall from \eqref{eq:transcrit-deriv} that $J_h$ is strictly increasing for all $x \neq 1/h$.
Hence, we can estimate
\begin{equation*}
\prod_{k=0}^{N + N+2} \left|J_h(-\rho + \epsilon h k )\right| \geq v_1(N)v^*(N+1) \geq  v^*(-(N+1))v^*(N+1) =1.
\end{equation*}
Therefore we can deduce $\psi_h(-N) \leq N+2$. Furthermore, we get directly from the strictly monotonic increase of $J_h$ that
\begin{equation*}
\prod_{k=0}^{N + N} \left|J_h(-\rho + \epsilon h k )\right| < v^*(-N)v^*(N) =1,
\end{equation*}
such that $\psi_h(-N) > N$ follows. This finishes the proof.
\end{proof}
\begin{remark}
Contraction and expansion balance out completely along the canard solution for the Kahan map which mirrors exactly the time-continuous case where
$$v^*(\rho) = \int_0^{\rho} 2 x \, \rmd x,  $$
such that the way-in/way-out map $\psi$ satisfies $\psi(\rho) = - \rho$ for all $\rho \in I \subset \mathbb{R}$ for some appropriate interval $I$.
We can make the starting and end point of the time-continuous and time-discrete system coincide exactly when simply choosing $\rho = \epsilon h \frac{2n-1}{2}$ for some $n \in \mathbb{N}$..
\end{remark}



\subsection{Pitchfork singularity}
\blue{We now consider the implicit Runge Kutta discretizations of equation~\eqref{ODE_pitchfork_overview}, according to the scheme~\eqref{eq:genscheme_a}. Note that, due to the cubic term, the explicit Kahan discretization~\eqref{genKahan} is not applicable.}

        \blue{The equations read 
\begin{equation} \label{eq:genscheme_pitchfork}
\begin{split} 
\dfrac{\tilde x - x}{h} &= a\left( xy - x^3\right) + (1-2a) \left[ \dfrac{x+\tilde x}{2}\dfrac{y + \tilde y}{2} - \dfrac{(x+ \tilde x)^3}{8}\right] + a\left( \tilde x \tilde y - \tilde x^3\right) , \\
\dfrac{\tilde y - y}{h} &=  \epsilon \,,
\end{split}
\end{equation}
and, e.g.,} the Kahan scheme gives the equations
\begin{equation} \label{eq:Kahan_pitchfork}
\begin{split} 
\dfrac{\tilde x - x}{h} &= - \frac{1}{2}\left( xy - x^3\right) + 2 \left[ \dfrac{x+\tilde x}{2}\dfrac{y + \tilde y}{2} - \dfrac{(x+ \tilde x)^3}{8}\right] - \frac{1}{2}\left( \tilde x \tilde y - \tilde x^3\right) , \\
\dfrac{\tilde y - y}{h} &=  \epsilon \,. 
\end{split}
\end{equation}
As opposed to the transcritical case, we do not obtain an explicit map but possibly several solutions.
\blue{Starting with $x=0$ and $y$ negative, then $\tilde x = 0$ is a solution as well as
        $$ \tilde x^2 = \frac{4-2 hy - h^2 \epsilon (1+2a)}{-h\left(\frac{1+6a}{2} \right)},$$
        as long as the right hand side is nonnegative. 
        For example, for the Kahan method this means that} starting with $x = 0$ and $ y\in (- \infty, 2/h)$, we get
$$ \tilde x \in \left\{ - \sqrt{\dfrac{4 -2 hy}{h}}, 0, \sqrt{\dfrac{4 -2 hy}{h}} \right\} $$
from equation~\eqref{eq:Kahan_pitchfork}. 
We can still analyze a canard solution in the following way:
\begin{proposition} \label{prop:Kahan_invariant_pitchfork}
The set
\begin{equation} \label{Kahan_invariant_pitchfork}
\cY : = \left\{ (x,y) \in \mathbb{R}^2 \, : \, x= 0 \right\} 
\end{equation}
is invariant under particular solutions of \blue{equations~\eqref{eq:genscheme_pitchfork}, and in particular~\eqref{eq:Kahan_pitchfork}} which are given by
\begin{equation}  \label{Kahan_gensol_pitchfork}
\gamma_{ y(0)}(n) = \left( 0, y(0)+\epsilon h n \right), \forall n \in \mathbb{Z}.
\end{equation}
In particular, for $(x,y)\in \cY$ we have \blue{for all $a < \frac{2}{h^2 \epsilon}$, including the Kahan method $a=-\frac{1}{2}$,
\begin{equation} \label{eq:magnitude_deriv_pitchfork}
\left|\frac{\partial\tilde x}{\partial x}\right|  \quad \left\{ \begin{array}{ll} < 1 & \mathrm{as \;long\; as\;} y < - \epsilon h/2, \\ =1 & \mathrm{ for \;}y=- \epsilon h/2,\\ >1 & \mathrm{as \;long\; as\;} y > - \epsilon h/2, \, y \neq \frac{2}{h} - \frac{h(1+2a)}{2} \epsilon. \end{array}\right.
\end{equation}
For $a > \frac{2}{h^2 \epsilon}$, the stability properties are precisely reversed.
When $a < \frac{2}{h^2 \epsilon}$,} a special canard solution, symmetric with respect to the partition 
$$\cY = S_{\textnormal{a}} \cup \{(0, - \epsilon h/2)\} \cup  S_{\textnormal{r}},$$ where
\begin{equation} \label{Kahan_invariant_pitchfork_a_r}
S_{\textnormal{a}} = \left\{ (x,y) \in \cY \, : \, y <- \epsilon h/2 \right\}, \ S_{\textnormal{r}} = \left\{ (x,y) \in \cY \, : \, y > - \epsilon h/2 \right\},
\end{equation}
is given for $y(0) = -\epsilon h/2$ and denoted by
\begin{equation} \label{Kahan_specsol_pitch}
\gamma(n)  = \left(0, \epsilon h \frac{2n-1}{2} \right), \forall n \in \mathbb{Z}.
\end{equation} 
\end{proposition}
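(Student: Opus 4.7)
The plan is to follow the structure of the proof of Proposition~\ref{prop:Kahan_invariant_transcrit}, adapting it to the implicit nature of the scheme~\eqref{eq:genscheme_pitchfork}. The argument splits into three parts: checking invariance of $\cY$, computing the normal linearization along $\cY$, and extracting the symmetric canard from the general orbit.

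For the invariance, I observe that every nonlinear term on the right-hand side of~\eqref{eq:genscheme_pitchfork} contains $x$, $\tilde x$, or $x+\tilde x$ as a factor; substituting $x = \tilde x = 0$ therefore makes the right-hand side vanish, so $\tilde x = 0$ is a solution of the implicit relation whenever $x = 0$. The $y$-component is independent of $x, \tilde x$, giving $\tilde y = y + \epsilon h$ and hence the orbit $\gamma_{y(0)}(n) = (0, y(0) + \epsilon h n)$ of~\eqref{Kahan_gensol_pitchfork}.

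For the magnitude condition, denote~\eqref{eq:genscheme_pitchfork} by $F(x,\tilde x, y, \tilde y) = 0$ and use implicit differentiation, together with $\partial \tilde y/\partial x = 0$, to obtain $\partial \tilde x/\partial x = -F_x / F_{\tilde x}$. Evaluating at $(0,0,y,y+\epsilon h)$ and simplifying, I expect the expression to reduce to
\begin{equation*}
\frac{\partial \tilde x}{\partial x}\bigg|_{\cY} = \frac{N_a(y)}{D_a(y)}, \quad N_a(y) = 1 + \tfrac{h y}{2} + \tfrac{h^2 \epsilon (1-2a)}{4}, \quad D_a(y) = 1 - \tfrac{h y}{2} - \tfrac{h^2 \epsilon (1+2a)}{4}.
\end{equation*}
Two clean algebraic identities then carry the argument:
\begin{equation*}
N_a(y) - D_a(y) = h\bigl(y + \tfrac{\epsilon h}{2}\bigr), \qquad N_a(y) + D_a(y) = 2 - h^2 \epsilon a.
\end{equation*}
The first identity shows $N_a = D_a$ exactly at $y = -\epsilon h/2$, independent of $a$, pinning down the canard point; the second identifies $a = 2/(h^2 \epsilon)$ as the exact threshold at which $N_a + D_a$ changes sign, which is also where the denominator zero $y = 2/h - h\epsilon(1+2a)/2$ crosses the canard point. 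Assuming $a < 2/(h^2 \epsilon)$ so that $N_a + D_a > 0$ and the singular value of $y$ lies above $-\epsilon h/2$, combining $|N_a - D_a| = h|y + \epsilon h/2|$ with $D_a > 0$ on $(-\infty, -\epsilon h/2]$ yields $|N_a/D_a| < 1$ for $y < -\epsilon h/2$ and $|N_a/D_a| > 1$ for $y > -\epsilon h/2$ away from the singularity, proving~\eqref{eq:magnitude_deriv_pitchfork}. The reverse case $a > 2/(h^2\epsilon)$ is handled by the same sign analysis, with $N_a + D_a < 0$ flipping the direction of the inequalities.

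The symmetric canard~\eqref{Kahan_specsol_pitch} is then obtained simply by evaluating $\gamma_{y(0)}$ at $y(0) = -\epsilon h/2$, and the partition $\cY = S_{\textnormal{a}} \cup \{(0, -\epsilon h/2)\} \cup S_{\textnormal{r}}$ follows from the stability classification just established. The only moderately delicate step is the implicit differentiation of the cubic, mixed-variable expression for $F$, where care is needed so that all contributions from $(x + \tilde x)$-terms are collected correctly; once that is done, the two identities for $N_a \pm D_a$ do all of the conceptual work, and the remarkable fact that the location of the canard point is independent of $a$ while the stability-reversal threshold depends on $a$ falls out transparently.
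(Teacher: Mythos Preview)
Your proposal is correct and follows the paper's overall structure: verify invariance of $\cY$, compute $J_{h,a}(y)=\partial\tilde x/\partial x|_\cY$ by implicit differentiation of~\eqref{eq:genscheme_pitchfork}, and read off the special canard. You obtain the same rational expression $J_{h,a}(y)=N_a(y)/D_a(y)$ as the paper does.

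The one genuine difference lies in how the magnitude condition~\eqref{eq:magnitude_deriv_pitchfork} is extracted. The paper differentiates $J_{h,a}$, finds
\[
J_{h,a}'(y)=\frac{h-\tfrac{h^3\epsilon a}{2}}{D_a(y)^2},
\]
and argues via monotonicity together with $J_{h,a}(-\epsilon h/2)=1$. Strictly speaking this yields $J_{h,a}<1$ rather than $|J_{h,a}|<1$ on the attracting side, and one tacitly needs the asymptotics $J_{h,a}(y)\to -1$ as $y\to\pm\infty$ to close the argument. Your route via the two identities
\[
N_a-D_a=h\bigl(y+\tfrac{\epsilon h}{2}\bigr),\qquad N_a+D_a=2-h^2\epsilon a,
\]
is more direct: since $|J_{h,a}|\lessgtr 1$ is equivalent to $(N_a-D_a)(N_a+D_a)\lessgtr 0$, the threshold $y=-\epsilon h/2$ and the reversal at $a=2/(h^2\epsilon)$ drop out immediately, with no appeal to monotonicity or limits. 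This also makes the $a$-independence of the canard point and the $a$-dependence of the stability reversal fully transparent, as you note. The only place your write-up could be tightened is the sentence invoking ``$D_a>0$ on $(-\infty,-\epsilon h/2]$'': that fact is true and worth recording (it locates the singularity), but the inequality $|N_a/D_a|<1$ follows already from the sign of $(N_a-D_a)(N_a+D_a)$ alone.
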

\begin{proof}
The existence of trajectories $\gamma_{y(0)}$ on $\cY$ follows from an easy calculation.
Furthermore, observe that, \blue{for any $a \in \mathbb{R}$,
\begin{align*}
\frac{1}{h} \dfrac{\partial \tilde x}{\partial x} &= \frac{1}{h} +a y -3 a x^2 + \frac{1-2a}{4}(y + \tilde y) + \frac{1-2a}{4}(y + \tilde y)\dfrac{\partial \tilde x}{\partial x} \\
&- \dfrac{1-2a}{4}  H(x, \tilde x) +a \tilde y \dfrac{\partial \tilde x}{\partial x} - 3 a \tilde x^2 \dfrac{\partial \tilde x}{\partial x},
\end{align*}
where $H(x, \tilde x) = \mathcal O\left( x^2, \tilde x^2, x \tilde x \right)$.
Hence, at $x= \tilde x = 0$, we obtain
\begin{equation*}
 \dfrac{\partial \tilde x}{\partial x} \left(1- h\dfrac{1-2a}{4}y - h\dfrac{1+2a}{4}(y+ \epsilon h\right) = 1 + h \left(\dfrac{1+2a}{4}y + \dfrac{1-2a}{4}(y+ \epsilon h)\right) ,
\end{equation*}
which gives
\begin{equation} \label{genschem_a_pitch_Jacobian}
   \frac{\partial \tilde x}{\partial x} (x,y) \big|_{(x,y) \in \cY}  = \dfrac{1+ \frac{h}{2}y + \frac{h^2(1-2a)}{4}\epsilon}{1- \frac{h}{2}y - \frac{h^2(1+2a)}{4}\epsilon} =: J_{h,a}(y). 
\end{equation} 
Similarly to the transcritical case, one can compute that $J_{h,a}(- \epsilon h/2) = 1$ for all choices of $a \in \mathbb R \setminus \{\frac{2}{h^2 \epsilon}\}$,  and for all $a \in \mathbb R, y \neq \frac{2}{h}- \frac{h(1+2a)}{2} \epsilon$, we have
$$
J_{h,a}'(y) = \dfrac{h - \frac{h^3 \epsilon a}{2}}{\left( 1- \frac{h}{2}y - \frac{h^2(1+2a)}{4}\epsilon\right)^2} \begin{cases} > 0 \ &\text{ if } a < \frac{2}{h^2 \epsilon},\\
=0 \ &\text{ if } a =\frac{2}{h^2 \epsilon}
\\
< 0 \ &\text{ if } a > \frac{2}{h^2 \epsilon}.\end{cases}
$$
Hence, in the first case stability behavior is preserved, and in the third case reversed, both situations separated by the critical value $a=\frac{2}{h^2 \epsilon} $.
}
In the case of the Kahan method we obtain
\begin{equation} \label{Kahan_pitch_Jacobian}
   \frac{\partial \tilde x}{\partial x} (x,y) \big|_{(x,y) \in \cY}  = \dfrac{1+ \frac{h}{2}y + \frac{h^2}{2}\epsilon}{1- \frac{h}{2}y} =: J_{h}(y).
\end{equation} 
and
$$
J_h'(y) = \dfrac{h + \frac{h^3 \epsilon}{4}}{\left( 1- \frac{h}{2}y\right)^2} > 0 , \ \forall y \neq \frac{2}{h}.
$$
The existence of the special canard $\gamma$~\eqref{Kahan_specsol_pitch} then follows directly.
\end{proof}
%
\blue{\begin{remark} \label{rem:Astability}
Proposition~\ref{prop:Kahan_invariant_pitchfork} shows that not any A-stable method preserves exactly the continuous-time behavior in terms of stable and unstable manifolds but only in certain cases. In fact, here, the stability properties can be exactly reversed. Note that this has to do with the fact that the discretization of the linearization and the linearization of the discretization do not necessarily commute. Hence, also A-stable methods can fail in preserving stability behaviour.
In more detail, if we consider the linearization of the ODE~\eqref{ODE_pitchfork_overview}, given along $\cY$ as $v'= y v$ with $y' = \epsilon$, then any method of the form~\eqref{eq:genscheme_a} gives
\begin{equation*}
    \tilde v = v \dfrac{4 + (\tilde y +y)h}{4 - (\tilde y +y)h}, \quad \tilde y = y + \epsilon h.
\end{equation*}
Hence, stability behavior is preserved on both sides of $y= - \epsilon h/2$ for any choice of $h$, due to A-stability independently from $a$. But we want to understand the behavior of the nonlinear maps as discretizations of the nonlinear ODE~\eqref{ODE_pitchfork_overview}, by the help of linearization of the maps along special trajectories, in this case canards. The results summarized in Proposition~\ref{prop:Kahan_invariant_pitchfork} demonstrate that the problem is much more subtle than covered by the concept of A-stability.
\end{remark}}
\blue{The following is formulated without loss of generality for the Kahan method but, of course, can be extended to all cases $a < \frac{2}{h^2 \epsilon}$.}

Similarly to the transcritical case, let $\rho\in O(1)$ be a positive constant and set $y(0)=-\rho$. It is easy to compute
$$ \frac{\partial \tilde x}{\partial y} (x,y) \big|_{(x,y) \in \cY} = 0.$$
Hence, the variational equation along $\gamma_{-\rho}$ reads
\begin{equation}\label{eq:variational_pitch_kahan}
	v(n+1)= \begin{pmatrix}[1.5]
		J_h (-\rho + \epsilon h n ) & 0 )\\ 0 & 1
		\end{pmatrix}v(n),
\end{equation}
where $v(n)=(v_1(n),\,v_2(n))\in\R^2$ and $n\in\N$. The Jacobian matrix has a simple eigenvalue $1$ for the eigenvector $(0,1)^\top$ which is a fixed point of equation~\eqref{eq:variational_pitch_kahan} and characterises the normal direction along the canard. The local contraction/expansion rate is characterised by trajectories in the transversal (hyperbolic) direction, corresponding to the eigenvector $(1,0)^\top$. Thus, the solution of \eqref{eq:variational_pitch_kahan} with intial condition $(v_1(0),\,v_2(0))=(1,0)$ is given by $(v_1(n),0)$, where
$$v_1(n) = \prod_{k=0}^n J_h(-\rho + \epsilon h k ) \quad \forall n \geq 0.$$
When $\rho = \epsilon h N + \epsilon h/2$ for some $N \in \mathbb{N}$, then $\gamma_{- \rho} = \gamma$, i.e.~we are on the special canard~\eqref{Kahan_specsol_pitch}. As before, we introduce
\begin{equation} \label{linearisation_map_pitch}
\begin{split}
v^*(m) &:= \prod_{k=0}^m J_h(\epsilon h \frac{2k-1}{2}) \quad \forall m\geq 0, \\
v^*(m) &:= \prod_{k=m}^0 J_h(\epsilon h \frac{2k-1}{2}) \quad \forall m \leq 0,
\end{split}
\end{equation}
and observe that for all $n \geq N$
\begin{equation} \label{eq:lin_partitioned_pitch}
v_1(n) = v^*( -N)v^*(n-N).
\end{equation}

This leads to the following statement, analogously to Proposition~\ref{prop:main_kahan_transcr}:
\begin{proposition} \label{prop:main_kahan_pitch}
For any $h, \epsilon > 0$ such that $\frac{2}{\epsilon h^2} + \frac{1}{2} \notin \mathbb{N}$, consider the entry point $y(0)= - \rho < 0$. 
\begin{enumerate}
\item If  $\rho = \epsilon h N + \epsilon h/2$ for some $N \in \mathbb{N}$, then
the way-in/way-out map $\psi_h$ given by
\begin{align*}
 1= \prod_{k=0}^{N + \psi_h(-N)} \left|J_h(-\rho + \epsilon h k )\right| &= \left|v_1(N + \psi_h(-N)) \right|\\
 &= \left|v^*(-N)v^*(\psi_h(-N))\right|,
\end{align*}
is well defined and takes the value 
$$\psi_h(-N) = N.$$
In other words, the accumulated contraction and expansion rates compensate each other in perfect symmetry.
\item If, generally,  $\rho \in (\epsilon h N + \epsilon h/2, \epsilon h (N+1) + \epsilon h/2) $ for some $N \in \mathbb{N}$ such that $\left(0, \frac{2}{h} \right) \notin \gamma_{-\rho}$, then the way-in/way-out map $\psi_h(-N)$ is given by the smallest natural number such that 
$$ 1\leq  \prod_{k=0}^{N + \psi_h(-N)} \left|J_h(-\rho + \epsilon h k )\right| = \left|v_1(N + \psi_h(-N)) \right|,$$
and satisfies 
\begin{equation}
\psi_h(-N) \in \{ N+1, N+2\}.
\end{equation}

\end{enumerate}
Summarising both cases, we conclude that the expansion has compensated for contraction at 
$$y^* \in \{\rho - \epsilon h, \rho, \rho + \epsilon h\},$$
giving full symmetry of the entry-exit relation.
\end{proposition}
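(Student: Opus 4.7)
The proof will closely parallel that of Proposition~\ref{prop:main_kahan_transcr}, since the multiplier $J_h$ for the pitchfork case (equation~\eqref{Kahan_pitch_Jacobian}) plays exactly the same role as in the transcritical case; only the explicit expression differs. The decisive ingredient is a multiplicative symmetry of $J_h$ about the singular point $y = -\epsilon h/2$, which I would establish first.

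The first step is to verify the algebraic identity $J_h(y_1)\,J_h(y_2) = 1$ whenever $y_1 + y_2 = -\epsilon h$ and $y_1, y_2 \neq 2/h$. Writing $y_2 = -\epsilon h - y_1$ and substituting into $J_h(y_2) = \frac{1 + \tfrac{h}{2}y_2 + \tfrac{h^2}{2}\epsilon}{1 - \tfrac{h}{2}y_2}$, one immediately sees that the numerator of $J_h(y_2)$ equals the denominator of $J_h(y_1)$, and the denominator of $J_h(y_2)$ equals the numerator of $J_h(y_1)$. In particular, taking $y_1 = \epsilon h (2n-1)/2$ and $y_2 = \epsilon h (-2n-1)/2$ gives $J_h(\epsilon h (2n-1)/2)\,J_h(\epsilon h (-2n-1)/2) = 1$ for all admissible $n \in \mathbb{Z}$, so the accumulated multipliers along $\gamma$ are symmetric about its singular midpoint.

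With this identity in hand, Part 1 follows immediately: for $\rho = \epsilon h N + \epsilon h/2$ the trajectory $\gamma_{-\rho}$ coincides with the shifted special canard $\gamma$, and the iterates $y_k = -\rho + \epsilon h k$ satisfy $y_k + y_{2N-k} = -\epsilon h$ for $k = 0, \dots, N-1$, while $y_N = -\epsilon h/2$ yields $J_h(y_N) = 1$. Pairing terms and using the decomposition~\eqref{eq:lin_partitioned_pitch} gives $v_1(2N) = v^*(-N) v^*(N) = 1$, so that $\psi_h(-N) = N$, as required.

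For Part 2, I would exploit the strict monotonicity of $J_h$ (established in the proof of Proposition~\ref{prop:Kahan_invariant_pitchfork} for $a = -\tfrac{1}{2} < 2/(h^2\epsilon)$) via a comparison between our $\rho$ and the two enveloping endpoint canards $\rho_\pm \in \{\epsilon h N + \epsilon h/2, \epsilon h(N+1) + \epsilon h/2\}$. Since $-\rho + \epsilon h k$ shifts monotonically with $\rho$ and $J_h$ is strictly increasing on its branch, the hypothesis $(0, 2/h) \notin \gamma_{-\rho}$ guarantees the multipliers remain positive along the trajectory, and we obtain pointwise inequalities $J_h(-\rho + \epsilon h k) < J_h(-\rho_- + \epsilon h k)$ and $J_h(-\rho + \epsilon h k) > J_h(-\rho_+ + \epsilon h k)$. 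Taking products gives $v_1(2N) < 1$ and $v_1(2N+2) > 1$, which forces $\psi_h(-N) \in \{N+1, N+2\}$ and yields the summary exit location $y^* \in \{\rho - \epsilon h, \rho, \rho + \epsilon h\}$. The main subtlety, which I expect to be the one delicate bookkeeping step, is to exclude pole-crossing along the trajectory: one must verify that the assumption $\tfrac{2}{\epsilon h^2} + \tfrac{1}{2} \notin \mathbb{N}$ together with the explicit exclusion in Part 2 suffices to keep all $2N+3$ factors on the same positive branch of $J_h$, so that the strict monotonicity genuinely transfers to a strict inequality on the product.
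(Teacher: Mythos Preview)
Your proposal is correct and follows essentially the same route as the paper's proof. The paper's argument is very terse: it states the identity $J_h\!\left(\epsilon h\,\tfrac{2n-1}{2}\right)J_h\!\left(\epsilon h\,\tfrac{-2n-1}{2}\right)=1$ (your Part~1 identity specialised to the lattice points of $\gamma$), deduces the first claim immediately, and for the second claim simply refers back to the monotonicity-and-comparison argument from Proposition~\ref{prop:main_kahan_transcr}. Your write-up is in fact slightly more detailed than the paper's, in that you make the enveloping comparison with $\rho_\pm$ explicit and flag the pole-crossing bookkeeping that the paper leaves implicit in its hypotheses.
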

\begin{proof}
Similarly to the transcritical case, we observe that
 $$J_h\left(\epsilon h \frac{2n-1}{2}\right)J_h\left(\epsilon h \frac{-2n-1}{2}\right) = 1$$
for all $ \epsilon h \frac{2n-1}{2} \neq 2/h$, $n \in \mathbb{Z}$. Hence, the first claim follows immediately.
The second claim can be deduced from arguments analagously to the proof of Proposition~\ref{prop:main_kahan_transcr}.
\end{proof}
In the transcritical and pitchfork case, the canards are given on lines and we observe the same linear strcuture for the Kahan discretizations in both situatiuons, with exactly the same symmetry properties. We will turn to the problem of a folded canard in the next subsection.
\subsection{Fold singularity}\label{sec:K-fold}
In contrast to~\cite{Engeletal2019}, we will restrict the following analysis to the most basic canonical form of a fast-slow system with fold singularity, i.e.~model~\eqref{ODE_fold}, since we are only interested in the properties of the linearization along the canard solution.

The Kahan discretization of system~\eqref{ODE_fold} reads as
\begin{equation}
\frac{\tilde x - x}{h} = \tilde x x - \frac{\tilde y + y}{2}, \quad \frac{\tilde y - y}{h} = \epsilon \frac{\tilde x + x}{2} , 
\end{equation}
and induces the invertible, birational map $P_{\textnormal{K}}: \mathbb{R}^2 \to \mathbb{R}^2$,  written explicitly as
\begin{equation} \label{map_Kahan}
P_{\textnormal{K}}: \begin{pmatrix}[2.5]
x \\ y 
\end{pmatrix}
\mapsto \begin{pmatrix}[2.5]
\tilde{x}  \\ \tilde y 
\end{pmatrix}
= \begin{pmatrix}[2.5]
\dfrac{x - hy - \frac{h^2}{4} \epsilon x}{ 1- hx + \frac{h^2}{4} \epsilon} \\ 
\dfrac{y - hyx - \frac{h^2}{2} \epsilon x^2  +  h \epsilon x - \frac{h^2}{4} \epsilon y}{ 1- hx + \frac{h^2}{4} \epsilon}
\end{pmatrix}.
\end{equation}
Similarly to before, we make the following observations (which can be found in a similar form in \cite{Engeletal2019}):
\begin{proposition} \label{prop:Kahan_invariant_fold}
The parabola
\begin{equation} \label{Kahan_invariant}
\cS_{\ve} : = \left\{ (x,y) \in \mathbb{R}^2 \, : \, y = x^2 - \frac{\epsilon}{2} - \frac{\epsilon^2 h^2}{8} \right\} 
\end{equation}
is invariant under iterations of $P_{\textnormal{K}}$~\eqref{map_Kahan}. Solutions on $\cS_{\ve}$ are given by
\begin{equation} \label{Kahan_gensol_fold}
\gamma_{x(0)}(n) = \left( x(0)+  n\frac{\epsilon h}{2},\left( x(0)+  n\frac{\epsilon h}{2}\right)^2 - \frac{\epsilon}{2} - \frac{\epsilon^2 h^2}{8} \right), \forall n \in \mathbb{Z}.
\end{equation}
For $(x,y)\in \cS_{\ve}$ we have
\begin{equation}
\left|\frac{\partial\tilde x}{\partial x}\right|  \quad \left\{ \begin{array}{ll} < 1 & \mathrm{as \;long\; as\;} x < 0, \\ =1 & \mathrm{ for \;}x=0,\\ >1 & \mathrm{as \;long\; as\;} x > 0, \, x \neq \left(1+\frac{h^2\epsilon}{4}\right)/h. \end{array}\right.
\end{equation}
A special canard solution, symmetric with respect to the partition 
$$\cS_{\ve} = \cS_{\textnormal{a},\ve} \cup \{(0, 0)\} \cup  \cS_{\textnormal{r},\ve},$$ where
\begin{equation} \label{Kahan_invariant_fold_a_r}
\cS_{\textnormal{a},\ve} = \left\{ (x,y) \in \cS_\ve \, : \, x <0 \right\}, \ \cS_{\textnormal{r},\ve} = \left\{ (x,y) \in \cS_\ve \, : \, x > 0 \right\},
\end{equation}
is given for $x(0) = 0$ and denoted by
\begin{equation} \label{Kahan_specsol_fold}
\gamma(n) = \left( n\frac{\epsilon h}{2},\left(n\frac{\epsilon h}{2}\right)^2 - \frac{\epsilon}{2} - \frac{\epsilon^2 h^2}{8} \right), \forall n \in \mathbb{Z}.
\end{equation}
\end{proposition}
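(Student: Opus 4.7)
The plan is to follow the same direct algebraic verification strategy used for the transcritical and pitchfork cases in Propositions~\ref{prop:Kahan_invariant_transcrit} and~\ref{prop:Kahan_invariant_pitchfork}, exploiting the fully explicit form of the birational map $P_{\textnormal{K}}$ given in~\eqref{map_Kahan}. The four claims (invariance of $\cS_\ve$, explicit form of solutions, stability dichotomy, and existence of the symmetric canard) will then fall out of two key computations: one that checks invariance and reads off the step on $\cS_\ve$, and one that simplifies the Jacobian entry $\partial\tilde x/\partial x$ restricted to $\cS_\ve$.

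First, I would substitute $y = x^2 - \tfrac{\epsilon}{2} - \tfrac{\epsilon^2 h^2}{8}$ into the first component $\tilde x$ of~\eqref{map_Kahan} and verify that the numerator factors as $(x + \tfrac{\epsilon h}{2})(1 - hx + \tfrac{h^2\epsilon}{4})$, which gives the remarkable identity
\begin{equation*}
    \tilde x \big|_{\cS_\ve} = x + \tfrac{\epsilon h}{2}.
\end{equation*}
Plugging this same substitution into the second component $\tilde y$ and comparing with $\tilde x^2 - \tfrac{\epsilon}{2} - \tfrac{\epsilon^2 h^2}{8} = \bigl(x + \tfrac{\epsilon h}{2}\bigr)^2 - \tfrac{\epsilon}{2} - \tfrac{\epsilon^2 h^2}{8}$ then proves invariance of $\cS_\ve$ and simultaneously yields the explicit trajectories $\gamma_{x(0)}(n)$ as in~\eqref{Kahan_gensol_fold}. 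Since $P_{\textnormal{K}}$ is birational, these trajectories extend to all $n\in\mathbb{Z}$.

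Next, I would compute $\partial\tilde x/\partial x$ from~\eqref{map_Kahan} via the quotient rule (treating $y$ as an independent variable) and then restrict to $\cS_\ve$ by substituting $y = x^2 - \tfrac{\epsilon}{2} - \tfrac{\epsilon^2 h^2}{8}$. The main computational step, which I expect to be the only slightly delicate one, is recognizing that the resulting numerator simplifies via the identity
\begin{equation*}
    \bigl(1 + \tfrac{h^2\epsilon}{4}\bigr)^2 - h^2 x^2 \;=\; \bigl(1 + \tfrac{h^2\epsilon}{4} - hx\bigr)\bigl(1 + \tfrac{h^2\epsilon}{4} + hx\bigr),
\end{equation*}
while the denominator is $\bigl(1 - hx + \tfrac{h^2\epsilon}{4}\bigr)^2$. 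One factor cancels, leaving
\begin{equation*}
    \frac{\partial\tilde x}{\partial x}\bigg|_{\cS_\ve} \;=\; \frac{1 + \tfrac{h^2\epsilon}{4} + hx}{1 + \tfrac{h^2\epsilon}{4} - hx},
\end{equation*}
which is precisely the structure of the Jacobian entries $J_h$ and $J_{h,a}$ from the transcritical and pitchfork cases. This ratio is clearly equal to $1$ at $x=0$, less than $1$ in modulus for $x<0$ (as long as the denominator stays positive, i.e.\ $x<(1+h^2\epsilon/4)/h$), and greater than $1$ for $0<x\neq(1+h^2\epsilon/4)/h$, giving the claimed stability dichotomy.

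Finally, the special canard $\gamma$ of~\eqref{Kahan_specsol_fold} is the unique solution with $x(0)=0$; by the first step it advances by $\epsilon h/2$ per iteration in $x$, and the assertion of symmetry of the partition $\cS_\ve = \cS_{\textnormal{a},\ve}\cup\{(0,0)\}\cup\cS_{\textnormal{r},\ve}$ around the non-hyperbolic point $(0,0)$ is immediate from the form of the Jacobian ratio, which satisfies $J(x)J(-x)=1$. The only potential obstacle is the bookkeeping in the factorization of the Jacobian numerator; once the $(1 + h^2\epsilon/4)^2 - h^2 x^2$ structure is spotted, everything reduces to the pattern already established in the previous subsections.
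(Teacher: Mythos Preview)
Your proposal is correct and follows the same direct algebraic verification as the paper: substitute $y=x^2-\tfrac{\epsilon}{2}-\tfrac{\epsilon^2h^2}{8}$, factor the numerator of $\tilde x$ to obtain $\tilde x=x+\tfrac{\epsilon h}{2}$, and then restrict $\partial\tilde x/\partial x$ to $\cS_\ve$. The one genuine difference is in how you finish the stability dichotomy: the paper stops at the unfactored form
\[
J_h(x)=\frac{(1+\tfrac{h^2\epsilon}{4})^2-h^2x^2}{(1-hx+\tfrac{h^2\epsilon}{4})^2}
\]
and argues via $J_h(0)=1$ together with $J_h'(x)>0$, whereas you cancel the common factor $(1+\tfrac{h^2\epsilon}{4}-hx)$ to get the M\"obius form $\frac{1+h^2\epsilon/4+hx}{1+h^2\epsilon/4-hx}$ and read off the dichotomy (and the identity $J_h(x)J_h(-x)=1$) directly. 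Your simplification is arguably cleaner and in fact handles the region $x>(1+h^2\epsilon/4)/h$ more transparently than the paper's monotonicity argument, since after the pole one must still check that $|J_h|>1$; otherwise the two proofs are the same.
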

\begin{proof}
The invariance of $\cS_{\ve}$ follows from a cumbersome but straight-forward calculation, checking that for 
$$ y = x^2 - \frac{\epsilon}{2} - \frac{\epsilon^2 h^2}{8}, $$
we indeed have
$$ \tilde y = \tilde x^2 - \frac{\epsilon}{2} - \frac{\epsilon^2 h^2}{8}.$$
Furthermore, observe that if $(x(0),y(0)) \in \cS_{\ve}$, we have
$$ \tilde x = \frac{x -h x^2 + \frac{\epsilon h}{2} + \frac{\epsilon^2 h^3}{8} - \frac{h^2 \epsilon }{4}x}{1 - h x + \frac{h^2}{4}\epsilon} = \frac{\left(1 - h x + \frac{h^2}{4}\epsilon\right)\left( x + \frac{h\epsilon}{2}\right)}{1 - h x + \frac{h^2}{4}} =  x + \frac{h \epsilon}{2}\,,$$
which shows the existence of $\gamma_{x(0)}$~\eqref{Kahan_gensol_fold}.

We compute the Jacobian matrix associated with~\eqref{map_Kahan} as
\begin{equation} \label{KahanJacobian}
\frac{\partial (\tilde x, \tilde y)}{\partial (x,y)}   = \begin{pmatrix}[2]
\frac{1-h^2y - \frac{h^4\epsilon^2}{16}}{\left(1-hx+\frac{h^2\epsilon}{4} \right)^2} & - \frac{h}{1-hx+\frac{h^2 \epsilon}{4}} \\
\frac{h\epsilon -h^2\epsilon x + \frac{h^3\epsilon}{4}(2x^2-2y+\epsilon) -\frac{h^4\epsilon^2}{4}x}{\left(1-hx+\frac{h^2\epsilon}{4} \right)^2} & \frac{1-hx-\frac{h^2\epsilon}{4}}{1-hx+\frac{h^2\epsilon}{4}}
\end{pmatrix}.
\end{equation}
In particular, observe that for $(x,y)\in \cS_{\ve}$ we have
\begin{equation} \label{Kahan_fold_Jacobian}
\frac{\partial \tilde x}{\partial x} (x,y) =  \dfrac{- h^2 x^2 + \left(1+\frac{h^2 \epsilon}{4}  \right)^2}{ \left(1-hx+\frac{h^2\epsilon}{4} \right)^2} =: J_h(x)\,.
\end{equation}
Clearly, $J_h(0) =1$. Moreover, we observe for all $x \in \mathbb{R} \setminus \left\{\left(1+\frac{h^2\epsilon}{4}\right)/h\right\}$ that
$$ J_h'(x) = \dfrac{2 h (1+ \frac{h^2\epsilon}{4})}{ \left(1-hx+\frac{h^2\epsilon}{4} \right)^2} > 0\,.$$
This concludes the claim.
\end{proof}
Similarly to the transcritical and the pitchfork case, let $\rho\in O(1)$ be a positive constant and set $x(0)=-\rho$. 
Similar to what we have done for the other singularities, we are going to consider the variational equation along $\gamma_{ - \rho}$ only in the $x$-direction. Note that the only point along $\gamma_{ - \rho}$ that is tangent to a horizontal line is at $p_0=(x,y)=\left( 0, - \frac{\epsilon}{2} - \frac{\epsilon^2 h^2}{8}\right)$. However, it follows from \eqref{KahanJacobian} that $\frac{\partial\tilde x}{\partial x}(p_0)=1$, meaning that at $p_0$ there is no contraction nor expansion. This observation indeed allows us to only focus on the factors $\frac{\partial \tilde x}{\partial x} (x,y)$ as contraction/expansion rates giving
$$v_1(n) = \prod_{k=0}^n J_h \left(-\rho + \frac{\epsilon h k}{2} \right) \quad \forall n \geq 0.$$
When $\rho = \epsilon h N/2$ for some $N \in \mathbb{N}$, then $\gamma_{ - \rho} = \gamma$, i.e.~we are on the special canard~\eqref{Kahan_specsol_fold}. As before, we introduce
\begin{equation} \label{linearisation_map_fold}
\begin{split}
v^*(m) &:= \prod_{k=0}^m J_h \left( \frac{\epsilon h k}{2} \right) \quad \forall m\geq 0, \\
v^*(m) &:= \prod_{k=m}^0 J_h \left(\frac{\epsilon h k}{2} \right) \quad \forall m \leq 0,
\end{split}
\end{equation}
and observe that for all $n \geq N$
\begin{equation} \label{eq:lin_partitioned_fold}
v_1(n) = v^*( -N)v^*(n-N).
\end{equation}
This leads to the following statement, analogously to Proposition~\ref{prop:main_kahan_transcr} and Proposition~\ref{prop:main_kahan_pitch}:
\begin{proposition} \label{prop:main_kahan_fold}
For any $h, \epsilon > 0$ such that $\frac{2}{\epsilon h^2} + \frac{1}{2} \notin \mathbb{N}$, consider the entry point $x(0)= - \rho < 0$. 
\begin{enumerate}
\item If  $\rho = \epsilon h N/2$ for some $N \in \mathbb{N}$, then
the way-in/way-out map $\psi_h$ given by
\begin{align*}
 1= \prod_{k=0}^{N + \psi_h(-N)} \left|J_h \left(-\rho + \frac{\epsilon h k}{2} \right)\right| &= \left|v_1(N + \psi_h(-N)) \right|\\
 &= \left|v^*(-N)v^*(\psi_h(-N))\right|,
\end{align*}
is well defined and takes the value 
$$\psi_h(-N) = N.$$
In other words, the accumulated contraction and expansion rates compensate each other in perfect symmetry.
\item If, generally,  $\rho \in (\epsilon h N/2, \epsilon h (N+1)/2) $ for some $N \in \mathbb{N}$ such that $\left(1+\frac{h^2\epsilon}{4}\right)/h\neq \gamma_{-\rho}^1(n)$ for all $n \in \mathbb{N}$ (where $\gamma_{-\rho}^1(n)$ denotes the first component of $\gamma_{-\rho}(n)$),  then the way-in/way-out map $\psi_h(-N)$ is given by the smallest natural number such that 
$$ 1\leq  \prod_{k=0}^{N + \psi_h(-N)} \left|J_h \left(-\rho + \frac{\epsilon h k}{2} \right)\right| = \left|v_1(N + \psi_h(-N)) \right|,$$
and satisfies 
\begin{equation}
\psi_h(-N) \in \{ N+1, N+2\}.
\end{equation}

\end{enumerate}
Summarising both cases, we conclude that the expansion has compensated for contraction at 
$$x^* \in \{\rho - \epsilon h, \rho, \rho + \epsilon h\},$$
giving full symmetry of the entry-exit relation.
\end{proposition}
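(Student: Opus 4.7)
The plan is to mirror the strategy of Propositions~\ref{prop:main_kahan_transcr} and~\ref{prop:main_kahan_pitch}, working with the fold-specific Jacobian factor $J_h$ from~\eqref{Kahan_fold_Jacobian} and with the fact that the slow step along $\gamma_{-\rho}$ on $\cS_\ve$ is $\epsilon h/2$ rather than $\epsilon h$. Since both claims are entirely controlled by the product $\prod J_h$, everything reduces to two properties of $J_h$: a multiplicative symmetry identity and strict monotonicity. The latter, $J_h'(x) > 0$ for $x \neq (1+h^2\epsilon/4)/h$, is already established in Proposition~\ref{prop:Kahan_invariant_fold}.

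First I would verify the symmetry identity $J_h(x)\,J_h(-x) = 1$ for all $x \neq \pm(1+h^2\epsilon/4)/h$. This is a one-line computation: from~\eqref{Kahan_fold_Jacobian}, the product of the two denominators factors as $\bigl((1+h^2\epsilon/4)^2 - h^2 x^2\bigr)^2$, which matches the square of the common numerator exactly. With this in hand, the first claim is immediate: for $\rho = \epsilon h N/2$, the reindexing $j = k - N$ yields
\begin{equation*}
v_1(2N) = \prod_{k=0}^{2N} J_h\!\left(-\rho + \tfrac{\epsilon h k}{2}\right) = \prod_{j=-N}^{N} J_h\!\left(\tfrac{\epsilon h j}{2}\right) = 1,
\end{equation*}
by pairing the $\pm j$ factors together with the middle factor $J_h(0) = 1$. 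Hence $\psi_h(-N) = N$ and $x^* = \rho$.

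For the second claim, I would write $\rho = \epsilon h N/2 + \delta$ with $\delta \in (0, \epsilon h/2)$ and bracket the general trajectory between two special canards using monotonicity. On one hand, each factor $J_h(-\delta + j \epsilon h/2)$ is strictly less than $J_h(j \epsilon h/2)$, so taking $M = N$ makes the product strictly smaller than the symmetric canard product $= 1$, giving $\psi_h(-N) > N$. On the other hand, at $M = N+2$ the reindexing $j' = j - 1$ turns the window of length $2N+3$ into factors $J_h\!\left((\epsilon h/2 - \delta) + j' \epsilon h/2\right)$ for $j' = -(N+1), \ldots, N+1$, each strictly greater than $J_h(j' \epsilon h/2)$; the symmetry identity then forces the product above $1$, yielding $\psi_h(-N) \leq N+2$. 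Combining these bounds gives $\psi_h(-N) \in \{N+1, N+2\}$ and correspondingly $x^* \in \{\rho - \epsilon h, \rho, \rho + \epsilon h\}$.

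The main technical obstacle I anticipate is ensuring that $\gamma_{-\rho}$ and the two bracketing canards all stay uniformly on the side of the pole $x = (1+h^2\epsilon/4)/h$ on which $J_h$ is positive throughout the window $k = 0, \ldots, 2N+2$, so that strict monotonicity can be invoked factor by factor and every $|J_h|$ agrees with $J_h$ itself. This is precisely what the exclusion of $(1+h^2\epsilon/4)/h$ from $\{\gamma_{-\rho}^1(n)\}$, together with the genericity condition $\tfrac{2}{\epsilon h^2} + \tfrac{1}{2} \notin \mathbb{N}$, is designed to guarantee; once it is in place, the argument is a direct analogue of the transcritical proof.
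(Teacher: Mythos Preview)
Your proposal is correct and follows essentially the same route as the paper: establish the multiplicative symmetry $J_h(x)J_h(-x)=1$ (your denominator-factoring argument is in fact cleaner than the paper's expanded computation), then invoke the strict monotonicity of $J_h$ from Proposition~\ref{prop:Kahan_invariant_fold} to bracket the general trajectory between two special canards, exactly as in the transcritical case. The paper's own proof is terser---it writes out the identity $J_h(x)J_h(-x)=1$ and then simply refers the second claim back to the argument of Proposition~\ref{prop:main_kahan_transcr}---but the underlying mechanism is identical to what you describe.
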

\begin{proof}
We show that $J_h(x)J_h(-x) = 1$ for all $ x \neq (1+h^2/4)/h$. Then the first claim follows immediately.
\begin{align*}
 J_h(x)J_h(-x) &=   \\
 &=\dfrac{- h^2 x^2 + \left(1+\frac{h^2}{4}  \right)^2}{ h^2 x^2 - 2\left(x h + \frac{h^3}{4}x\right) + \left(1+\frac{h^2}{4} \right)^2} \dfrac{- h^2 x^2 + \left(1+\frac{h^2}{4}  \right)^2}{ h^2 x^2 + 2\left(x h + x\frac{h^3}{4}\right) + \left(1+\frac{h^2}{4} \right)^2} \\
 &= \dfrac{ h^4 x^4 -2 \left(h x+\frac{h^3}{4}x  \right)^2 +\left(1+\frac{h^2}{4}  \right)^4}{ h^4 x^4 -4 \left(h x+\frac{h^3}{4}x  \right)^2 + 2 \left(h x+\frac{h^3}{4}x  \right)^2 +\left(1+\frac{h^2}{4} \right)^4} = 1.
\end{align*}
The second claim can be deduced from arguments analogously to the proof of Proposition~\ref{prop:main_kahan_transcr} and Proposition~\ref{prop:Kahan_invariant_pitchfork}.
\end{proof}
\begin{remark}
Again, contraction and expansion balance out completely along the canard solution for the Kahan map which mirrors exactly the time-continuous case where
$$v^*(\rho) = \int_0^{\rho} 2 x \, \rmd x,  $$
such that the way-in/way-out map $\psi$ satisfies $\psi(\rho) = - \rho$ for all $\rho \in I \subset \mathbb{R}$ for some appropriate interval $I$.
We can make the starting and end point of the time-continuous and time-discrete system coincide exactly when simply choosing $\rho =  \frac{\epsilon hn}{2}$ for some $n \in \mathbb{N}$.
\end{remark}

We conclude this section by noting that Theorem \ref{thm:Kahan} immediately follows from Propositions \ref{prop:Kahan_invariant_transcrit}, \ref{prop:main_kahan_transcr}, \ref{prop:Kahan_invariant_pitchfork},  \ref{prop:main_kahan_pitch}, \ref{prop:Kahan_invariant_fold}, \ref{prop:main_kahan_fold}.


\section{Numerical simulations}\label{sec:numerics}

\blue{

In this section we are going to support and validate our analysis presented above by a series of numerical simulations. All the forthcoming simulations are concerned with the transcritical singularity and have been performed with Python version 3.8.1 (this becomes relevant below due to the numerical precision one needs to take into account for performing simulations of fast-slow systems).

To start, we present in Figure \ref{fig:critical-triplet} surfaces that depict the critical triplets $(\rho^*, h^*, \ve^* )$ (see Definition~\ref{def:crit-triplet}) for five different numerical schemes: the Euler method and four common third order RK-schemes. We emphasize that we do not claim that a critical triplet exists for every explicit RK scheme.

\begin{figure}[htbp]\centering
    \includegraphics[scale=1]{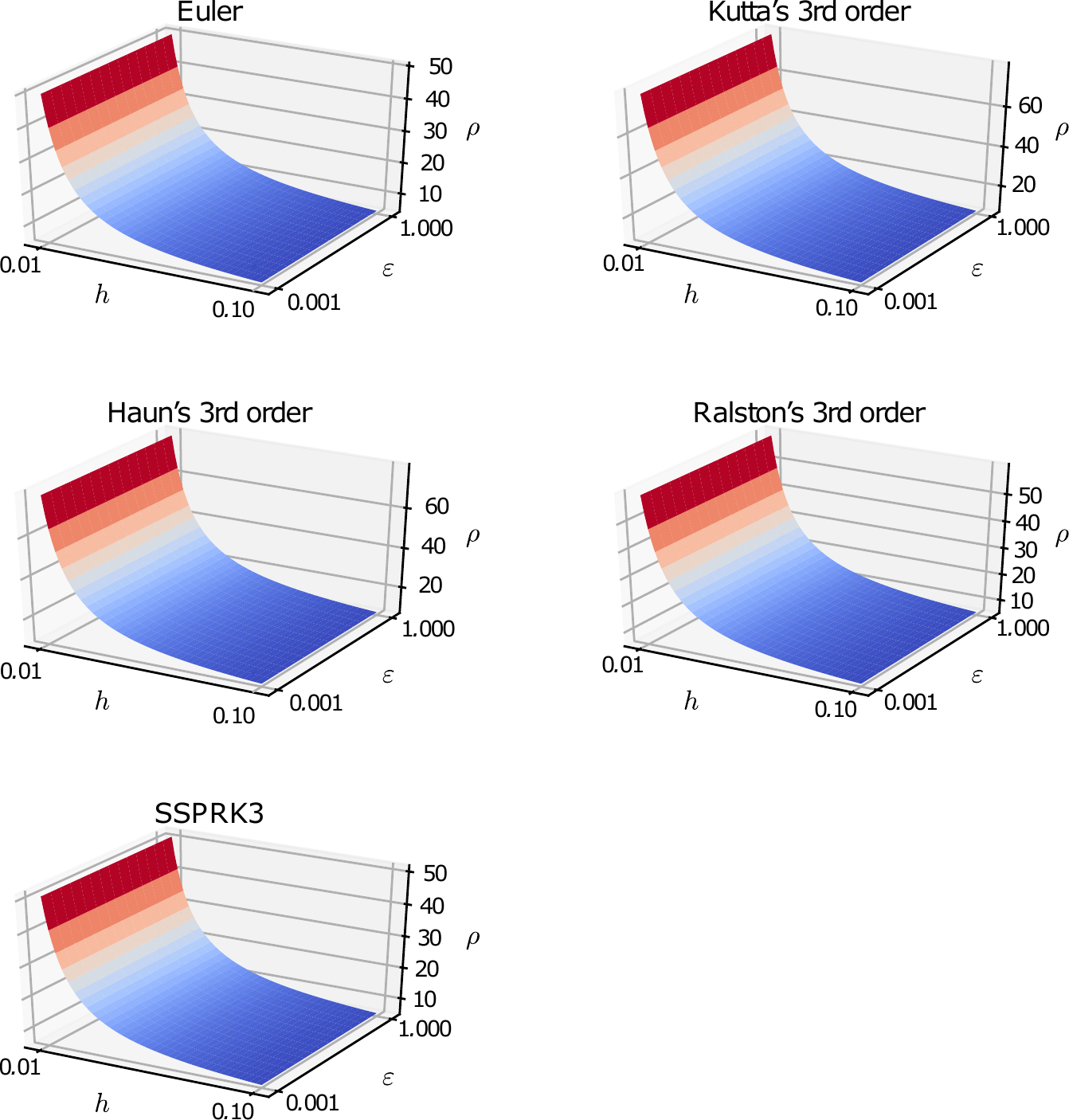}
    \caption{\blue{Surfaces of critical triplets for transcritical singularities and several explicit RK-methods.}}
    \label{fig:critical-triplet}
\end{figure}

\clearpage

The surfaces of Figure \ref{fig:critical-triplet} show the values of the critical triplets (see Definition \ref{def:crit-triplet}) for a fast-slow system with a transcritical singularity. The plot on the upper-left corner corresponds to the Euler method, which is an explicit RK-method of order $1$. All the other plots in  Figure \ref{fig:critical-triplet} correspond to common explicit RK-methods whose names appear at the top of each plot. SSPRK3 stands for ``Strong Stability Preserving Runge-Kutta of order $3$''. One observes from Figure \ref{fig:critical-triplet} that, although different explicit RK methods may be used, the values of the critical triplets are approximately the same. More importantly, the plots illustrate that, at least for the transcritical singularity, the variation of the critical value $\ve^*$ is vanishingly small compared to changes of the critical value $h^*$; as we can see in Figure \ref{fig:critical-triplet}, the critical triplet appears to be constant along the $\ve$-axis (although in detail one can see slight variations). 

We note the following critical issue regarding computer simulations --- which are performed in Python in this case, but the arguments transfer naturally to any programming language:

\begin{remark} \label{rem:num_issues}
Canards are objects that are very difficult to track numerically. In particular for the transcritical singularity (without higher order terms) the maximal canard is the diagonal $\mathcal{D}=\left\{ x=y \right\}$. We recall that $\mathcal D$ is invariant for the continuous time system but also for all maps obtained by an explicit RK discretization scheme. 

This causes a serious issue when simulating a fast-slow system with a transcritical singularity: the default floating point error in common modern computers is $\frac{1}{2^{23}}\approx1\times 10^{-16}$. Besides the potential risk of errors being accumulated after a large number of iterations, two distinct numbers being apart by less than $1\times 10^{-16}$ may be interpreted as the same number, especially if one does not control the approximation error of the numerical computations. This is particularly inconvenient for the transcritical singularity given the fact that trajectories get exponentially close to the diagonal $\mathcal D$. We exemplify this numerical issue in the left panel of Figure \ref{fig:sticky}, where we observe that the shown trajectory does not leave the invariant diagonal $\mathcal D$ due to the fact that at some point it lies within distance less than $1\times 10^{16}$ from $\mathcal D$. 

\begin{figure}[htbp]
    \centering
    \begin{tikzpicture}
    \node at (0,0) {
    \includegraphics{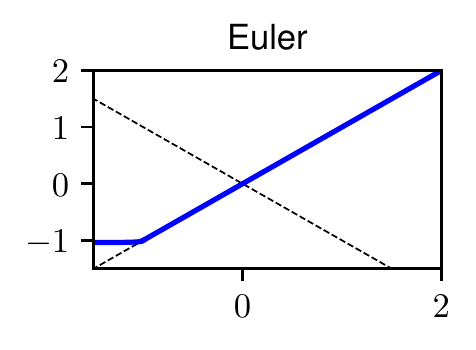}
    };
    \node at (0,-2) {$x$};
    \node at (-2.5,0) {$y$};
    \end{tikzpicture}
    \hspace*{2cm}
    \begin{tikzpicture}
    \node at (0,0) {
    \includegraphics{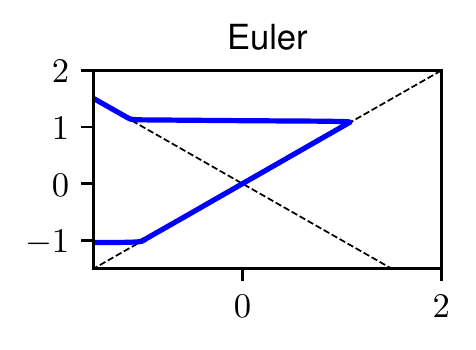}
    };
    \node at (0,-2) {$x$};
    \node at (-2.5,0) {$y$};
    \end{tikzpicture}
    \caption{\blue{Simulations of a fast-slow system with a transcritical singularity using the Euler method (the dashed black lines correspond to the critical set $\left\{ x^2=y^2\right\}$). Both simulations show an orbit passing through the point $(x,y)=(-1,-1+ 10^{-4})$. Moreover, both simulations are for values $(h,\ve)=(10^{-4},10^{-2})$. On the left panel we show the simulation corresponding to default settings, i.e., with a floating point error of approx. $1\times 10^{-16}$. On the right panel we show the same simulation but now with a working precision of $50$ decimal digits. We notice that, even though the time-step $h$ is small, only the right panel displays the expected result. We also remark that this issue is independent of the discretization method.}}
    \label{fig:sticky}
\end{figure}

To overcome the aforementioned numerical problem, we are using the Python library {\texttt{mpmath}}~\cite{mpmath} which allows one to choose an arbitrary working precision for the simulation. To compare we are showing in the right panel of Figure \ref{fig:sticky} a simulation with exactly the same settings, but with a working precision of $50$ decimal digits. Setting a high enough floating point precision for our numerical simulations becomes even more important as we approach parameter values $(\rho,h,\ve)$ close to the critical triplet. Therefore, all our forthcoming simulations have been performed with a decimal precision of $5000$ decimal digits. This means that two floating point numbers are considered to be equal only if they coincide up to their $5000th$ decimal digit.

\end{remark}

To showcase numerically the long delays that one may observe in simulations, we present in Figure \ref{fig:delays} a series of plots for several values of the critical triplet. For these plots we have chosen values of critical triplets according to Figure \ref{fig:critical-triplet}. However one should note that such values are computed from the linearization of the discretized system, and thus differ slightly from the true critical triplet of the nonlinear system. Therefore, we provide in Table \ref{tab:values} more precise values of the critical triplets considered for our simulations, and in particular of the step size $h^*$. 

\begin{figure}[htbp]
    \centering

    \begin{tikzpicture}[scale=.99]
    \node (g1) at (-4,0){
    \includegraphics[scale=1]{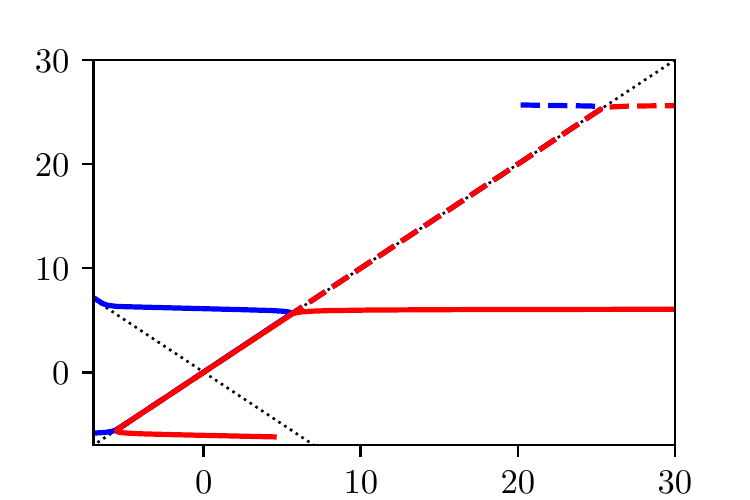}
    };
    \node[above= -6mm of g1] {
    \small
    $(\rho^*,\ve^*)=(5,1), \; h^*\approx 0.1 $
    };
    \node[left=-5mm of g1]{$y$};
    \node[below=-1mm of g1]{$x$};
    \node (g2) at (4,0){
    \includegraphics[scale=1]{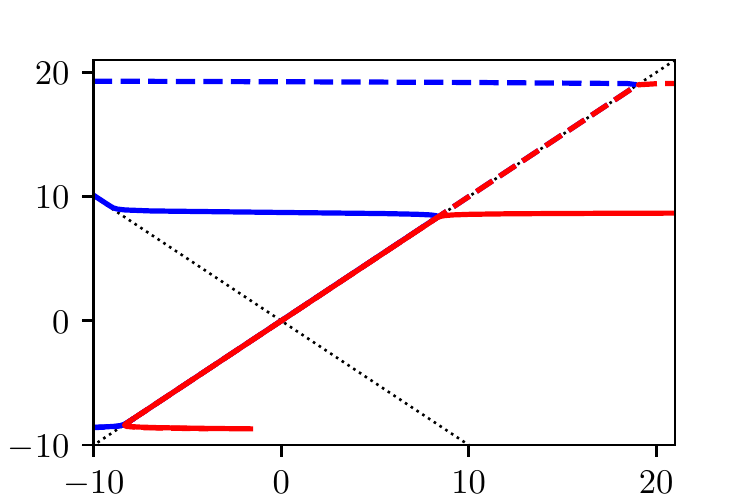}
    };
    \node[above= -6mm of g2] {
    \small
    $(\rho^*,\ve^*)=(8,1),\; h^*\approx 0.1$
    };
    \node[left=-5mm of g2]{$y$};
    \node[below=-1mm of g2]{$x$};
    
    \node (g3) at (-4,-6){
    \includegraphics[scale=1]{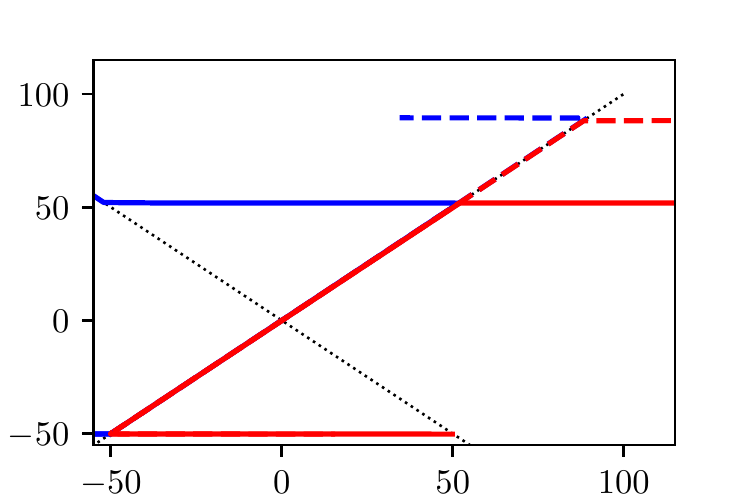}
    };
    \node[above= -6mm of g3] {
    \small
    $(\rho^*,\ve^*)=(50,1), \; h^*\approx 0.01 $
    };
    \node[left=-5mm of g3]{$y$};
    \node[below=-1mm of g3]{$x$};
    \node (g4) at (4,-6){
    \includegraphics[scale=1]{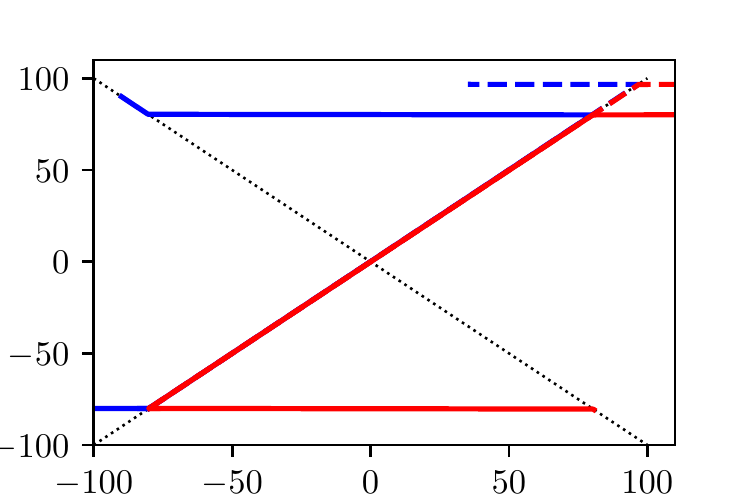}
    };
    \node[above= -6mm of g4] {
    \small
    $(\rho^*,\ve^*)=(80,1), \; h^*\approx 0.01 $
    };
    \node[left=-5mm of g4]{$y$};
    \node[below=-1mm of g4]{$x$};
    \node (g5) at (-4,-12){
    \includegraphics[scale=1]{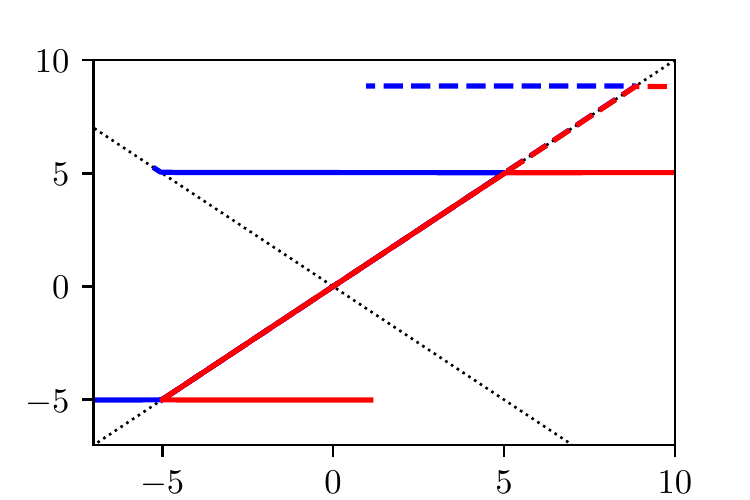}
    };
    \node[above= -6mm of g5] {
    \small
    $(\rho^*,\ve^*)=(5,0.01), \; h^*\approx 0.1 $
    };
    \node[left=-5mm of g5]{$y$};
    \node[below=-1mm of g5]{$x$};
    \node (g6) at (4,-12){
    \includegraphics[scale=1]{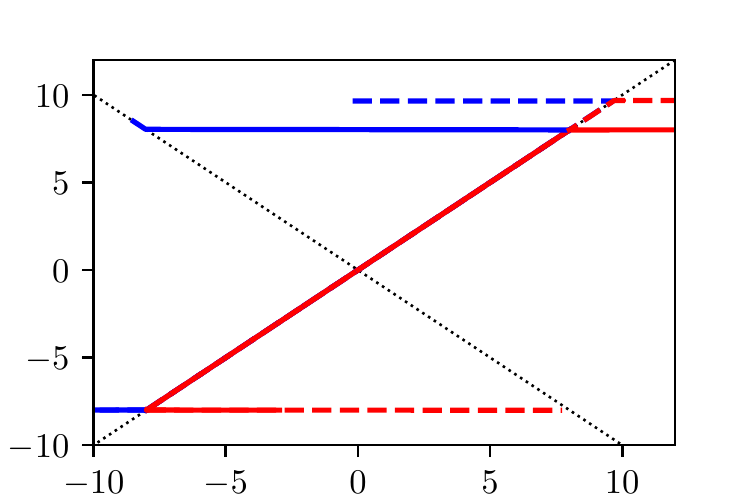}
    };
    \node[above= -6mm of g6] {
    \small
    $(\rho^*,\ve^*)=(8,0.01), \; h^*\approx 0.1 $
    };
    \node[left=-5mm of g6]{$y$};
    \node[below=-1mm of g6]{$x$};
    \end{tikzpicture}
    \caption{\blue{All plots show orbits that pass through the points $(x,y)=(-\rho^*,-\rho^*+10^{-4})$ (blue) and $(x,y)=(-\rho^*,-\rho^*-10^{-4})$ (red). The approximated value of $h^*$ is given by Figure \ref{fig:critical-triplet}. The first column corresponds to simulations with the Euler method, while the second column corresponds with Kutta's 3rd order method. Solid lines are produced with a sufficiently small discretization step ($h=10^{-3}$) to produce expected, close to symmetric, delay. Dashed lines correspond to a time step close (up to the 100th decimal digit) to the critical value $h^*$, see more details in Table \ref{tab:values}.}
    }
    \label{fig:delays}
\end{figure}

\setlength{\tabcolsep}{20pt}

\begin{table}[htbp]
    \centering
\begin{tabular}{lc}
    \multicolumn{2}{c}{Euler}\\
    \toprule
        $(\rho^*,\ve^*)$ & $h^*$  \\
        \toprule
        $(5,1)$  &  $0.104\cdots0<h^*<0.104\cdots7$\\
        \midrule
        $(50,1)$ & $0.009\cdots0<h^*<0.010\cdots1$\\
        \midrule 
        $(5,0.01)$ & $0.099\cdots0<h^*<0.100\cdots1$\\
    \end{tabular}

\vspace*{1cm}

     \begin{tabular}{lc}
    \multicolumn{2}{c}{Kutta 3rd order}\\
    \toprule
        $(\rho^*,\ve^*)$ & $h^*$  \\
        \toprule
        $(8,1)$  &  $0.100\cdots0<h^*<0.100\cdots4$\\
        \midrule
        $(80,1)$ & $0.010\cdots0<h^*<0.010\cdots4$\\
        \midrule 
        $(8,0.01)$ & $0.100\cdots3<h^*<0.100\cdots9$\\
    \end{tabular}
    \caption{\blue{Numerical values of the critical triplets used for the simulations of Figure \ref{fig:delays}. We remark that the surfaces of Figure \ref{fig:critical-triplet} are computed from the variational problem, and thus give only a good enough approximation of the true values of the critical triplet for the nonlinear system. The numerical values presented in this table are computed from the nonlinear system via a bisection method: we fix $(\rho^*,\ve^*)$ as in the table. Next, near the critical value $h^*$, if $h<h^*$ then the trajectory jumps in the correct direction while if $h>h^*$, the trajectory jumps to the opposite wrong direction. We have computed the critical discretization step size up to 100 significant decimals. Therefore, the dots $\cdots$ represent $97$ digits. For the simulations of Figure \ref{fig:delays} we have used the values $(\rho,\ve)$ as indicated in the first column of the tables and, for the discretization step $h$, the lower bound of $h^*$. }}
    \label{tab:values}
\end{table}

\clearpage

In Figure \ref{fig:delays} one can see that a step size close to the critical value indeed induces an extra delay on the onset of instability. Numerically, the least cumbersome situation is given for $(h,\ve)=(0.1,1)$. Hence, for such values one easily observes a longer delay for a step size close to the critical value (up to the $100$th digit). However, we also demonstrate in Figure \ref{fig:delays} the more general results from Section \ref{sec:rk_transcrit}; indeed, the delay becomes larger as $h\to h^*$ also for smaller combinations of $(h,\ve)$.

To finalize, we can compare the above simulations, using explicit RK-methods, with simulations obtained by using the Kahan method, as depicted in Figure \ref{fig:kahan}: the onset of instability is completely symmetric for different choices of $(h,\ve)$.

\begin{figure}[htbp]\centering
\begin{tikzpicture}
\node (g1) at (-4,3) {\includegraphics{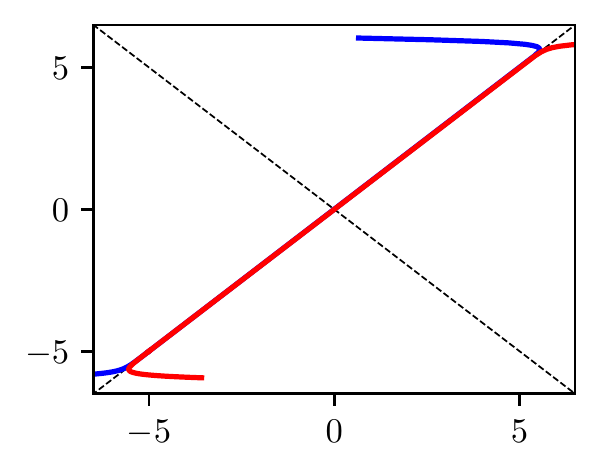}};
\node (g2) at ( 4,3) {\includegraphics{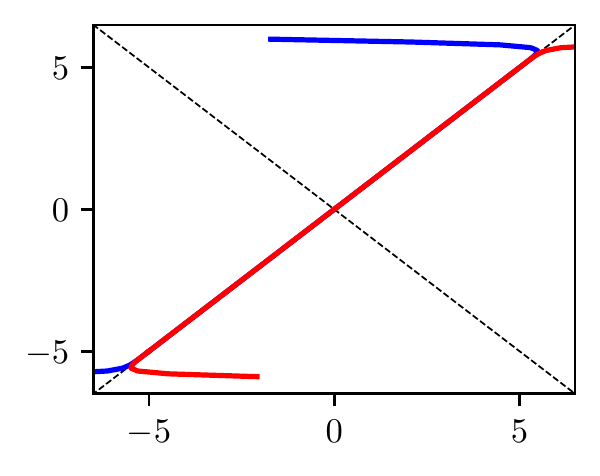}};
\node (g3) at (-4,-3) {\includegraphics{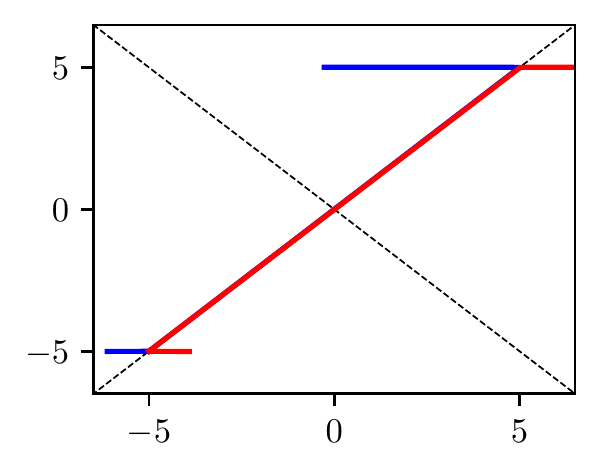}};
\node (g4) at ( 4,-3) {\includegraphics{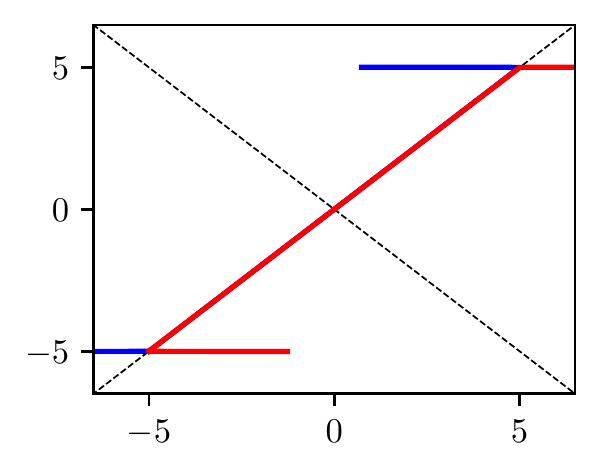}};

\node[left=-5mm of g1] {$y$};
\node[left=-5mm of g2] {$y$};
\node[left=-5mm of g3] {$y$};
\node[left=-5mm of g4] {$y$};

\node[below=-4mm of g1] {$x$};
\node[below=-4mm of g2] {$x$};
\node[below=-4mm of g3] {$x$};
\node[below=-4mm of g4] {$x$};

\node[above=-4.5mm of g1] {$(h,\ve)=(0.01,1)$};
\node[above=-4.5mm of g2] {$(h,\ve)=(0.1,1)$};
\node[above=-4.5mm of g3] {$(h,\ve)=(0.01,.01)$};
\node[above=-4.5mm of g4] {$(h,\ve)=(0.1,.01)$};

\end{tikzpicture}
\caption{\blue{Simulations of a fast-slow system with transcritical singularity, using the Kahan method, for several combinations of $(h,\ve)$ values. The blue curves correspond to the orbit passing through $(x,y)=(-5,-5+10^{4})$, while the red curves correspond to the orbit passing through $(x,y)=(-5,-5-10^{4})$. We observe that, as proved in section \ref{sec:Kahan}, the delay is symmetric.}}
\label{fig:kahan}
\end{figure}

}

\section{Conclusion}\label{sec:discussion}
We have shown that explicit Runge-Kutta schemes can fail to provide accurate approximations of dynamical behaviour along canard trajectories for certain combinations of step size $h$ and entry coordinates $(x_0, y_0)$ in the plane. In fact, RK-schemes are prone to show much longer delays on the onset of bifurcation than one expects from the continuous-time cases. It is worth noting that for maps, such an extra delay is not a phenomenon due to time scale separation, but truly a discretization artefact. We have quantified this phenomenon for simple canonical forms of planar fast-slow problems: hence, we emphasize that, also for more complicated systems, one should be cautious when simulating trajectories close to canards and trying to find entry-exit relations.

Additionally, we have proven that an implicit Runge-Kutta scheme like the Kahan method, which gives an explicit birational map for quadratic vector fields, preserves the symmetry of the linearization around planar canards. 
\blue{At the hand of the pitchfork example, we have studied a more general class of A-stable, symmetric second order methods, to which the Kahan method belongs, showing that, depending on an additional parameter, some of these methods preserve and others reverse the continuous-time canards. In particular, we have demonstrated that the discretization of the nonlinear dynamics and its description via linearization along special trajectories is sensitive to dynamical intricacies which are not yet fully understood and cannot be easily resolved around A-stability.
Structure-preserving discretization for non-Hamiltonian systems is still investigated step by step (see also \cite{Engeletal2019}) and a deeper understanding is still developed.} 

\blue{The extension of the ideas developed in this paper to higher dimensional systems requires careful consideration. On the one hand, if the dynamics of a high dimensional system can be reduced to a planar one (e.g.~via center manifold reduction), where a singularity as those considered here appears, then we can expect to observe the phenomena presented in this article. On the other hand, when considering higher dimensional systems many additional singular bifurcations have to be analyzed. As a direct extension to our work, one may want to consider discretizations of a fast-slow system with a singular Hopf bifurcation where the delayed onset of instability has been studied by extending the continuous time variable to the complex plane \cite{hayes2016geometric,neishtadt1987persistence,neishtadt1988persistence}. It would be particularly interesting to compare such an analysis with the study of a fast-slow version of the Neimark-Sacker bifurcation. In a similar context, we believe that our work also motivates a more general analysis of $2$-parameter families of dynamic bifurcations for maps.}


\section*{Acknowledgements}
The authors gratefully acknowledge C. Kuehn for pointing out the finite form of the Jensen inequality. ME is supported by the grant SFB/TRR 109 ``Discretization in Geometry and Dynamics" sponsored by the German Research Foundation (DFG). HJK is supported by the Alexander von Humboldt Foundation.

\bibliographystyle{abbrv}
\bibliography{bib_summary}

\end{document}